\newtheorem{thm}{Theorem}[section]
\newtheorem{cor}[thm]{Corollary}
\newtheorem{lem}[thm]{Lemma}
\newtheorem{pr}[thm]{Proposition}
\theoremstyle{definition}
\newtheorem*{convention}{Convention}
\theoremstyle{remark}
\newcounter{obr}[section]
\newcounter{pvv}[section]
\renewcommand{\thepvv}{\thesection.\arabic{pvv}}
\newcommand{\saM}{\m_{sa}}
\newcommand{\psaM}{\m_{*sa}}
\newcommand{\A}{\mathcal A}
\newcommand{\m}{\mathcal M}
\newcommand{\N}{\mathds N}
\newcommand{\nn}{\ensuremath{\mathbb N}}
\newcommand{\zz}{\ensuremath{\mathbb Z}}
\newcommand{\cc}{\ensuremath{\mathbb C}}
\newcommand{\rr}{\ensuremath{\mathbb{R}}}
\newcommand{\Q}{\ensuremath{\mathbb{Q}}}
\def\Re{\mathop{\rm Re}\nolimits}
\def\Im{\mathop{\rm Im}\nolimits}
\def\sp{\mathop{\rm sp}\nolimits}
\def\dom{\mathop{\rm dom}\nolimits}
\def\rng{\mathop{\rm rng}\nolimits}
\newcommand{\vNa}{von Neumann algebra}
\newcommand{\Ca}{$C^\ast$-algebra}
\newcommand{\JBWs}{$JBW^\ast$-algebra}
\newcommand{\JBW}{$JBW$-algebra}
\title{Decompositions of preduals of JBW and JBW$^*$ algebras}
\author{Martin Bohata, Jan Hamhalter and Ond\v{r}ej F.K. Kalenda}
\address{Czech Technical University in Prague, Faculty of Electrical Engineering, Department of Mathematics, Technick\'a 2, 166~27 Prague 6, Czech Republic}
\email{bohata@math.feld.cvut.cz}
\email{hamhalte@math.feld.cvut.cz}
\address{Charles University in Prague, Faculty of Mathematics and Physics, Department of Mathematical Analysis,
Sokolovsk\'a 86, 186~75 Praha 8, Czech Republic}
\email{kalenda@karlin.mff.cuni.cz}
\begin{document}

\begin{abstract}
We prove that the predual of any JBW$^*$-algebra is a complex $1$-Plichko space and the predual of any JBW-algebra is a real $1$-Plichko space. I.e., any such space has a countably $1$-norming Markushevich basis, or, equivalently, a commutative $1$-projectional skeleton. This extends recent results of the authors who proved the same for preduals of von Neumann algebras and their self-adjoint parts. However, the more general setting of Jordan algebras turned to be much more complicated.
We use in the proof a set-theoretical method of elementary submodels. As a byproduct we obtain a result on amalgamation of projectional skeletons.
\end{abstract}


\keywords{Jordan algebra, JBW-algebra, predual, $1$-Plichko space, weakly compactly generated space, projectional skeleton}

\subjclass[2010]{46L70,46B26}

\thanks{Our research was supported in part by the grant GA\v{C}R P201/12/0290.}

\maketitle

\section{Introduction and main results}

The aim of the present paper is to show that the predual of any \JBW{} is $1$-Plichko (i.e., it has a countably $1$-norming Markushevich basis or, equivalently, it admits a commutative $1$-projectional skeleton) and the same holds also for preduals of \JBWs{}s. This extends previous results of the authors who showed in \cite{BHK} the same statements on preduals of \vNa{}s and their self-adjoint parts. \JBWs{}s can be viewed as a generalization of \vNa{}s, this class was introduced and studied in \cite{edwards}; a \JBW{} can be represented as the self-adjoint part of a \JBWs{} (see \cite{edwards}). Precise definitions and a necessary background on these algebras is given in Section~\ref{S:Jordan} below.

$1$-Plichko spaces form one of the largest classes of Banach spaces which admit a reasonable decomposition to separable pieces.
This class and some related classes of Banach spaces together with the associated classes of compact spaces were thoroughly
 studied for example in \cite{val90,val91,survey}. The class of $1$-Plichko spaces can be viewed as a common roof of previously studied classes of weakly compactly generated spaces \cite{AL}, weakly $K$-analytic Banach spaces \cite{talagrand}, weakly countably determined (Va\v{s}\'ak) spaces \cite{vasak,mer87} and weakly Lindel\"of determined spaces \cite{AM}. Examples of 
 $1$-Plichko spaces include $L^1$ spaces, order continuous Banach lattices, spaces $C(G)$ for a compact abelian group $G$
\cite{val-exa}; preduals of \vNa{}s and their self-adjoint parts \cite{BHK}.

Let us continue by defining $1$-Plichko spaces and some related classes. We will do it using the notion of a projectional 
skeleton introduced in \cite{kubisSkeleton}. If $X$ is a Banach space, a \emph{projectional skeleton} on $X$ is an indexed system of 
bounded linear projections $(P_\lambda)_{\lambda\in\Lambda}$ where $\Lambda$ is an up-directed set such that the following 
conditions are satisfied:

	 \begin{itemize}

	 \item[(i)] $\sup_{\lambda\in\Lambda}\|P_\lambda\|<\infty$,

	\item[(ii)] $P_\lambda X$ is separable for each $\lambda$,

	\item[(iii)] $P_\lambda P_\mu=P_\mu P_\lambda =P_\lambda$ whenever $\lambda\le\mu$,

	\item[(iv)] if $(\lambda_n)$ is an increasing sequence in $\Lambda$, it has a supremum $\lambda\in\Lambda$ and $P_\lambda [X]=\overline{\bigcup_n P_{\lambda_n}[X]}$,

		\item[(v)]$X=\bigcup_{\lambda\in\Lambda} P_\lambda [X]$.

	 \end{itemize}

\noindent The subspace $D=\bigcup_{\lambda\in\Lambda} P^*_\lambda [X^*]$ is called the \emph{subspace induced by the skeleton}.	 
If $\|P_\lambda\|=1$ for each $\lambda\in\Lambda$, the family $(P_\lambda)_{\lambda\in\Lambda}$ is said to be \emph{$1$-projectional skeleton}. 
The skeleton $(P_\lambda)_{\lambda\in\Lambda}$ is said to be \emph{commutative} if
$P_\lambda P_\mu=P_\mu P_\lambda$ for any $\lambda,\mu\in \Lambda$. 
A Banach space having a commutative ($1$-)projectional skeleton is called \emph{($1$-)Plichko}. 

This is not the original definition used in \cite{survey,val-exa} which says that $X$ is ($1$-)Plichko if $X^*$ admits a ($1$-)norming $\Sigma$-subspace. Let us recall that a subspace $D\subset X^*$ is $r$-norming ($r\ge 0$) if the formula
$$ |x|=\sup\{|x^*(x)| : x^*\in D, \|x^*\|\le1 \} $$
defines an equivalent norm on $X$ for which $\|\cdot \|\le r |\cdot |$.

Further, a subspace $D\subset X^*$ is a \emph{$\Sigma$-subspace} of $X^*$ if there is a linearly dense set $M\subset X$ such that
$$D=\{ x^*\in X^*: \{m\in M: x^*(m)\ne 0\}\mbox{ is countable}\}.$$
It follows from \cite[Proposition 21 and Theorem 27]{kubisSkeleton} that a norming subspace of $X^*$ is a $\Sigma$-subspace of $X^*$ if and only if it is induced by a commutative projectional skeleton, therefore our definitions are equivalent to the original ones.

Finally, recall that a Banach space $X$ is called \emph{weakly Lindel\"of determined} (shortly \emph{WLD}) if $X^*$ is a $\Sigma$-subspace of itself or, equivalently, if $X^*$ is induced by a commutative projectional skeleton in $X$.

Now we can formulate our main results. The following theorem extends \cite[Theorems 1.1 and 1.4]{BHK} to the more general setting of Jordan algebras. Precise definitions of the respective algebras are in the following section.

\begin{thm}\label{T:main} \

\begin{itemize}

	\item Let $\m$ be any \JBWs. Its predual $\m_*$ is a (complex) $1$-Plichko space. Moreover, $\m_*$ is WLD if and only if $\m$ is $\sigma$-finite. In this case it is even weakly compactly generated.

	\item Let $\m$ be any \JBW. Its predual $\m_*$ is a (real) $1$-Plichko space. Moreover, $\m_*$ is WLD if and only if $\m$ is $\sigma$-finite. In this case it is even weakly compactly generated.

\end{itemize}

\end{thm}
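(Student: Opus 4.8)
The plan is to reduce everything to the construction of a commutative $1$-projectional skeleton on the predual of a \JBW, and to assemble such a skeleton from $\sigma$-finite building blocks by the method of elementary submodels. First I would dispose of the passage between the complex and the real statement. The self-adjoint part $\saM$ of a \JBWs{} $\m$ is a \JBW, and as a real Banach space $\m_*$ is the direct sum of the self-adjoint normal functionals, which form a copy of $(\saM)_*$, and their imaginary multiples. Since being $1$-Plichko, WLD and weakly compactly generated (WCG) are all stable under this realification and under forming such a direct sum, it suffices to treat the \JBW{} case, i.e. to produce a $1$-norming $\Sigma$-subspace of $\m=(\m_*)^*$.

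The base case is a $\sigma$-finite \JBW. Fixing a faithful normal state and passing through the associated GNS-type representation, one checks that $\m_*$ is generated by a weakly compact set, so it is WCG, hence WLD and in particular $1$-Plichko with the whole of $\m$ as a $1$-norming $\Sigma$-subspace. This already gives one implication of the $\sigma$-finiteness criterion. For the converse, if $\m$ is not $\sigma$-finite it contains an uncountable orthogonal family of nonzero projections, which produces a complemented copy of $\ell^1(\Gamma)$ with $\Gamma$ uncountable inside $\m_*$; since $\ell^1(\Gamma)$ is not WLD while WLD passes to complemented subspaces, $\m_*$ cannot be WLD. Thus WLD is equivalent to $\sigma$-finiteness, and WCG follows in that case.

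For a general \JBW{} I would first use the type decomposition to split off the exceptional summand, which is of type $I_3$ and has the form $\m_{ex}\cong L^\infty(\mu)\ot H_3(\mathbb{O})$; since $H_3(\mathbb{O})$ is finite dimensional, $(\m_{ex})_*$ is an $L^1$-space and hence $1$-Plichko outright. The special summand is, on each $\sigma$-finite corner, governed by an enveloping \vNa{} together with a canonical norm-one Jordan conditional expectation, so that the von Neumann results of \cite{BHK} become available locally. The skeleton on all of $\m_*$ is then produced by the method of elementary submodels: to a suitable elementary submodel $N\prec(H_\theta,\in)$ containing $\m$, $\m_*$ and the relevant data one associates a support projection and a norm-one projection $P_N$ of $\m_*$ onto the separable part captured by $N$, and one checks that the family $(P_N)_N$ is a commutative family satisfying conditions (i)--(v).

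The decisive new ingredient, and the main obstacle, is twofold. On the one hand the corners of the associative theory must be replaced by the Peirce/quadratic maps $U_e$, whose three-term Peirce decomposition makes it considerably more delicate to show that the induced maps on $\m_*$ are contractive projections and that the subalgebras arising from elementary submodels carry conditional-expectation-type norm-one projections; this is automatic in the associative setting but genuinely problematic for \JBW{}s, and the exceptional factor has to be handled separately. On the other hand, gluing the (possibly uncountably many) central pieces into a single skeleton cannot be done by a naive union of the local skeletons. Here I would prove and invoke an amalgamation theorem for commutative $1$-projectional skeletons, carried out coherently along the directed family of elementary submodels, which is exactly the byproduct advertised in the abstract.
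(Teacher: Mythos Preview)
Your overall architecture is recognisable but diverges from the paper in two substantial ways, and one of them hides a genuine gap.

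\textbf{Different route.} The paper does not use the exceptional/special type decomposition, nor any enveloping von~Neumann algebra or Jordan conditional expectation. It works uniformly with the quadratic maps $U_p$ for $\sigma$-finite projections $p$: Proposition~\ref{P:skel1} shows that the restrictions $Q_p=U_p^*|_{\m_*}$ already form a ``skeleton with WCG ranges'', and Proposition~\ref{P:skel2} refines this to a \emph{commutative} system $(R_C)_C$ indexed by countable subsets of a fixed maximal orthogonal family of $\sigma$-finite projections, using only the identity $U_{p+q}U_{p+r}=U_p$ for mutually orthogonal $p,q,r$. The elementary-submodel machinery then produces the skeleton on $\m_*$, and Lemma~\ref{le:kom2} transfers the commutativity of the $R_C$'s to commutativity of the submodel projections $P_M$. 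Also, the reduction goes the opposite way to yours: the \JBWs{} case is proved first, and the \JBW{} case is deduced by restricting the $R_C$'s to the self-adjoint part.

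\textbf{The gap.} Your final step assumes an ``amalgamation theorem for commutative $1$-projectional skeletons''. The paper's amalgamation result, Theorem~\ref{T:lepeni}, produces only a (not necessarily commutative) projectional skeleton from a WLD-ranged system, and the authors explicitly state that they do not know whether commutativity can be preserved in that generality. In the paper the commutativity of the final skeleton is \emph{not} obtained by amalgamation at all; it comes from the extra algebraic input of Proposition~\ref{P:skel2} (the $R_C$'s commute because they arise from an orthogonal family) together with Lemma~\ref{le:kom2}, which shows that each $P_M$ restricted to $R_C[\m_*]$ is one of the skeleton projections $S_{C,j_C}$, so that two $P_M$'s can be compared through the commuting $R_C$'s. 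Your plan, which first passes through a type decomposition and von~Neumann envelopes and only afterwards tries to glue, loses exactly this algebraic leverage: you would need the unproved commutative amalgamation to finish. Either you must supply that theorem, or reorganise the argument so that commutativity is built in at the level of the building blocks, as the paper does.
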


As a corollary we get the following extension of a result of U.~Haagerup \cite[Theorem IX.1]{GGMS} on preduals of \vNa{}s. It follows immediately from Theorem~\ref{T:main} and the definition of projectional skeletons.
A Banach space $X$ is said to have \emph{separable complementation property} if each countable subset of $X$ is contained in some separable complemented subspace of $X$.

\begin{cor} \

\begin{itemize}

	\item The predual of any \JBWs{} enjoys the separable complementation property.

	\item The predual of any \JBW{} enjoys the separable complementation property.

\end{itemize}

\end{cor}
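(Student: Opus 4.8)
The plan is to deduce both assertions at once directly from Theorem~\ref{T:main}, with no further input from the theory of Jordan algebras. Since the predual $\m_*$ is ($1$-)Plichko in either case, it carries a commutative projectional skeleton $(P_\lambda)_{\lambda\in\Lambda}$, and the bounded linear projections $P_\lambda$ will themselves furnish the required separable complemented subspaces. In fact the statement holds for \emph{any} Banach space admitting a projectional skeleton, so all the real work is already contained in Theorem~\ref{T:main}.

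First I would fix a countable subset $\{x_n:n\in\N\}$ of $\m_*$. By condition (v) in the definition of a skeleton, for each $n$ there is some $\mu_n\in\Lambda$ with $x_n\in P_{\mu_n}[\m_*]$. Using the up-directedness of $\Lambda$ I would then choose inductively an increasing sequence $(\lambda_n)$ in $\Lambda$ with $\lambda_n$ an upper bound of $\{\mu_1,\dots,\mu_n\}$. Condition (iii) gives $P_{\lambda_n}[\m_*]\subseteq P_{\lambda_m}[\m_*]$ whenever $n\le m$ (as $P_{\lambda_m}P_{\lambda_n}=P_{\lambda_n}$), and in particular $x_k\in P_{\lambda_n}[\m_*]$ for all $n\ge k$.

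Next I would invoke condition (iv): the increasing sequence $(\lambda_n)$ has a supremum $\lambda\in\Lambda$ and $P_\lambda[\m_*]=\overline{\bigcup_n P_{\lambda_n}[\m_*]}$. The subspace $Y=P_\lambda[\m_*]$ is separable by condition (ii), it is complemented in $\m_*$ because $P_\lambda$ is a bounded linear projection of $\m_*$ onto $Y$, and it contains every $x_k$ since $x_k\in P_{\lambda_k}[\m_*]\subseteq Y$. Hence every countable subset of $\m_*$ is contained in a separable complemented subspace, which is precisely the separable complementation property, in both the \JBWs{} and the \JBW{} case.

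There is no genuine obstacle here; the entire difficulty is concentrated in Theorem~\ref{T:main}, whose conclusion supplies the skeleton. The only point needing a little care is the passage from the pointwise covering provided by (v) to a single index $\lambda$, which is exactly where up-directedness together with the chain condition (iv) is used — and this is the reason the countable (rather than merely finite) case goes through, so that the deduction is indeed immediate from the definitions.
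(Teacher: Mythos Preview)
Your argument is correct and is exactly what the paper has in mind: it states that the corollary ``follows immediately from Theorem~\ref{T:main} and the definition of projectional skeletons,'' and you have simply written out that immediate deduction in detail. There is nothing to add.
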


Since the bidual of any $JB$-algebra is a \JBW{} and the bidual of any $JB^\ast$-algebra is a \JBWs{},
the following result follows.

\begin{cor} \

\begin{itemize}

	\item The dual of any $JB^\ast$-algebra is a (complex) $1$-Plichko space.

	\item The dual of any $JB$-algebra is a (real) $1$-Plichko space.

\end{itemize}

\end{cor}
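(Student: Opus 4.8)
The plan is to deduce both statements directly from \theoref{T:main} by identifying the dual of the algebra with the predual of its bidual. Fix a $JB^\ast$-algebra $A$. As recalled in the sentence preceding the corollary, its bidual $A^{\ast\ast}$ carries the structure of a \JBWs{}; this is the Jordan counterpart of the classical fact that the bidual of a \Ca{} is a \vNa{}, the relevant product on $A^{\ast\ast}$ being the canonical extension of the Jordan product. I would first simply invoke this structural fact, drawing it from the background collected in \secref{S:Jordan}.

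The key observation is then purely Banach-space theoretic. Canonically $A^{\ast\ast}=(A^\ast)^\ast$, so $A^\ast$ is a predual of the Banach space $A^{\ast\ast}$. Since a \JBWs{} admits a unique isometric predual, this forces $A^\ast$ to be isometrically isomorphic to the predual $(A^{\ast\ast})_*$ appearing in \theoref{T:main}. Applying the first part of \theoref{T:main} with $\m=A^{\ast\ast}$ then shows that $(A^{\ast\ast})_*$, and hence $A^\ast$, is a complex $1$-Plichko space. The real case is formally identical: for a \JBW{} $A$ the bidual $A^{\ast\ast}$ is a \JBW{}, its unique predual is again $A^\ast$, and the second part of \theoref{T:main} yields that $A^\ast$ is a real $1$-Plichko space.

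I expect no genuine obstacle here, since the entire substance has already been absorbed into \theoref{T:main}; what remains is only the bookkeeping of these identifications. The one point that deserves an explicit check is the uniqueness of the predual of a \JBWs{} (resp.\ \JBW{}) as a Banach space, so that the abstract predual $(A^{\ast\ast})_*$ really coincides with $A^\ast$ and not merely with some other norming subspace of $A^{\ast\ast}$; this is where I would cite the uniqueness-of-predual result recorded in \secref{S:Jordan}. Once that is in place, the corollary follows immediately, exactly as the sentence preceding it asserts.
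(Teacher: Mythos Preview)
Your proposal is correct and follows exactly the route the paper takes: the paper gives no formal proof, only the sentence preceding the corollary, which says that the bidual of a $JB^\ast$-algebra (resp.\ $JB$-algebra) is a \JBWs{} (resp.\ \JBW{}), so Theorem~\ref{T:main} applies. Your careful remark about needing uniqueness of the predual to identify $(A^{**})_*$ with $A^*$ is the one nontrivial bookkeeping step; the paper records this uniqueness for \JBW{}s in Section~\ref{S:Jordan} (and it passes to \JBWs{}s via Lemma~\ref{L:JBW-JBW*}), so your appeal to it is legitimate.
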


The rest of the paper is devoted to the proof of Theorem~\ref{T:main}. The proof uses some ideas from \cite{BHK}
but is much more involved. As a byproduct we obtain the following theorem which seems to be of an independent interest.

\begin{thm}\label{T:lepeni} 

Let $X$ be a (real or complex) Banach space. Suppose that there is an indexed family $(R_\lambda)_{\lambda\in\Lambda}$ 
of linear projections on $X$ such that the following assertions are satisfied.

\begin{itemize}

	\item[(i)] $\sup_{\lambda\in\Lambda}\|R_\lambda\|<\infty$.

	\item[(ii)] $R_\lambda [X]$ is WLD for each $\lambda\in\Lambda$.

	\item[(iii)] If $\lambda,\mu\in\Lambda$ are such that $\lambda\le\mu$, then $R_\lambda R_\mu=R_\mu R_\lambda=R_\lambda$.

	\item[(iv)] If $\lambda_1\le \lambda_2\le \dots$ are in $\Lambda$, then $\lambda=\sup_n \lambda_n$ exists in $\Lambda$ and, moreover $R_\lambda [X]=\overline{\bigcup_n R_{\lambda_n}[X]}$.

	\item[(v)] $X=\bigcup_{\lambda\in\Lambda} R_\lambda [X]$.

\end{itemize}
Then there is a projectional skeleton on $X$ such that the subspace of $X^*$ induced by the skeleton equals $\bigcup_{\lambda\in\Lambda}R_\lambda^*[X^*]$. 
\end{thm}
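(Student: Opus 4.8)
The plan is to build the skeleton by the method of elementary submodels, attaching to each suitable countable elementary submodel $N$ a single projection $P_N$ assembled from the given $R_\lambda$'s and the WLD structure of their ranges, and then to invoke the standard criterion (as used in \cite{BHK} and \cite{kubisSkeleton}) that a uniformly bounded, coherent family of projections indexed by a chain-closed cofinal family of submodels is a projectional skeleton. First I would record the easy reductions. Writing $Y_\lambda=R_\lambda[X]$, condition (iii) gives $Y_\lambda\subseteq Y_\mu$ whenever $\lambda\le\mu$, with $R_\lambda|_{Y_\mu}$ a projection of $Y_\mu$ onto $Y_\lambda$; moreover $R_\lambda^*[X^*]=(\ker R_\lambda)^\perp$ is naturally $Y_\lambda^*$, and $D:=\bigcup_{\lambda}R_\lambda^*[X^*]$ is $C$-norming for $C=\sup_\lambda\|R_\lambda\|$ (given $x$, pick $\mu$ with $x\in Y_\mu$ by (v) and apply $R_\mu^*$ to a nearly norming functional). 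Since each $Y_\lambda$ is WLD, $Y_\lambda^*$ is a $\Sigma$-subspace of itself, hence $Y_\lambda$ carries a commutative $1$-projectional skeleton inducing all of $Y_\lambda^*$; in submodel language this means that for every countable elementary submodel $N$ with $Y_\lambda\in N$ there is a canonical \emph{norm-one} projection $q^\lambda_N$ of $Y_\lambda$ onto $\overline{Y_\lambda\cap N}$, these projections are compatible under inclusion of submodels, continuous along increasing unions, and the ranges of their adjoints exhaust $Y_\lambda^*$.

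Next I would fix a large enough $\theta$ and a well-ordering, and feed $X$, $\Lambda$, its order, the family $(R_\lambda)_\lambda$ and the assignment carrying each $\lambda$ to the skeleton data of $Y_\lambda$ into the structure $(H(\theta),\in,\le)$. For a countable elementary $N$ containing these parameters, $\Lambda\cap N$ is countable and, by elementarity together with up-directedness of $\Lambda$, itself up-directed; choosing an increasing cofinal sequence $\lambda_1\le\lambda_2\le\cdots$ in it and putting $\lambda_N=\sup_n\lambda_n$ (which exists by (iv)) yields $Y_{\lambda_N}=\overline{\bigcup_n Y_{\lambda_n}}$. A short elementarity argument using (v) shows $X\cap N\subseteq Y_{\lambda_N}$, so $\overline{X\cap N}\subseteq Y_{\lambda_N}$ and in fact $\overline{X\cap N}=\overline{Y_{\lambda_N}\cap N}$. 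I would then assemble the projections $q^{\lambda_n}_N$ into a single norm-one projection $Q_N$ of $Y_{\lambda_N}$ onto $\overline{X\cap N}$ and define $P_N:=Q_N R_{\lambda_N}$. One checks directly that $P_N$ is a projection with $\|P_N\|\le C$, range $\overline{X\cap N}$, and $P_N=P_N R_{\lambda_N}=R_{\lambda_N}P_N$.

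The identification of the induced subspace is then essentially forced. From $P_N=P_N R_{\lambda_N}$ one gets $P_N^*[X^*]\subseteq R_{\lambda_N}^*[X^*]\subseteq D$, so the induced subspace lies in $D$. Conversely, given $d\in R_\mu^*[X^*]$, take $N$ containing $\mu$ and a functional witnessing $d$; then $\mu\le\lambda_N$, so $d\in Y_{\lambda_N}^*\cap N$, and the exhaustion property of the skeleton of $Y_{\lambda_N}$ places $d$ in the range of $Q_N^*$. Using $R_\mu R_{\lambda_N}=R_\mu$ one computes $d(z)=d(R_{\lambda_N}z)$ for $z\in\ker P_N$ and $R_{\lambda_N}z\in\ker Q_N$, whence $d(z)=0$; thus $d\in(\ker P_N)^\perp=P_N^*[X^*]$, and the induced subspace equals $D$. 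Finally the family $(P_N)$, indexed by a chain-closed cofinal set of submodels ordered by inclusion, verifies the skeleton axioms via the general submodel criterion.

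The hard part will be the coherent assembling of $Q_N$. The norm-one projections $q^{\lambda_n}_N$ live on the genuinely different spaces $Y_{\lambda_n}$ and are a priori unrelated, so to glue them into one projection of $Y_{\lambda_N}=\overline{\bigcup_n Y_{\lambda_n}}$ I must secure an intertwining with the compatible projections $R_{\lambda_m}|_{Y_{\lambda_n}}$ for $m\le n$, namely $R_{\lambda_m}q^{\lambda_n}_N=q^{\lambda_m}_N R_{\lambda_m}$, so that $Q_N y:=\lim_k q^{\lambda_k}_N R_{\lambda_k}y$ is well defined and idempotent. Establishing this intertwining — and, relatedly, the continuity-at-suprema axiom (iv) for the family $(P_N)$ — is the delicate core of the argument; it rests on the submodel-intrinsic description of the norm-one skeleton projections (which depend only on $\overline{Y\cap N}$ and on $Y^*\cap N$), on the fact that $R_{\lambda_m}$ and $R_{\lambda_m}^*$ belong to $N$ and carry the $N$-traces of $Y_{\lambda_n}$ and $Y_{\lambda_n}^*$ onto those of $Y_{\lambda_m}$, and on the uniform bound $C$ from (i).
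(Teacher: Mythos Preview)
Your approach is viable and, once the gluing is carried out, yields the same projections as the paper, but the route is genuinely different and more laborious than what the paper does.

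The paper never assembles $Q_N$ from the individual WLD-skeleton pieces $q^{\lambda_n}_N$. Instead it observes that, since $D$ is norming, for any suitable model $M$ the canonical projection of $\overline{X\cap M}\oplus (D\cap M)_\perp$ onto $\overline{X\cap M}$ along $(D\cap M)_\perp$ is automatically bounded (Lemma~\ref{le:projekce}), and then proves in one stroke that this decomposition exhausts $X$: assuming it does not, a separating functional $z^*$ vanishes on $X\cap M$ and lies in $\overline{D\cap M}^{w^*}$; for each $\lambda\in\Lambda\cap M$ one applies the WLD decomposition $R_\lambda[X]=\overline{R_\lambda[X]\cap M}+((R_\lambda[X])^*\cap M)_\perp$ (Lemma~\ref{le:wld}) to conclude $R_\lambda^*z^*=0$, hence $R_{\lambda_0}^*z^*=0$ for $\lambda_0=\sup(\Lambda\cap M)$, and finally an elementarity argument shows $\ker R_{\lambda_0}\subset (D\cap M)_\perp$, forcing $z^*=0$. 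The projection $P_M$ then comes for free with range $\overline{X\cap M}$ and kernel $(D\cap M)_\perp$, and the equality of the induced subspace with $D$ follows from weak$^*$ countable closedness of $D$ together with \cite[Corollary~20]{kubisSkeleton}.

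What you do differently is to \emph{construct} $P_N$ from the $q^{\lambda_n}_N$, which forces you into the intertwining verification $R_{\lambda_m}q^{\lambda_n}_N=q^{\lambda_m}_N R_{\lambda_m}$ that you flag as the hard part. This is precisely the step the paper sidesteps: by working with the kernel $(D\cap M)_\perp$ from the outset, the compatibility of the pieces is encoded in the single subspace $D\cap M$ rather than checked pairwise. Your approach has the merit of being more explicit about how the WLD skeletons of the $Y_\lambda$ fit together, and your direct verification that the induced subspace equals $D$ is a nice alternative to invoking \cite[Corollary~20]{kubisSkeleton}; but the paper's decomposition-first argument is shorter and avoids the gluing altogether. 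If you wish to complete your version, note that the intertwining reduces to checking that $R_{\lambda_m}$ carries both the range $\overline{Y_{\lambda_n}\cap N}$ and the kernel $(Y_{\lambda_n}^*\cap N)_\perp\cap Y_{\lambda_n}$ of $q^{\lambda_n}_N$ into the corresponding pieces for $q^{\lambda_m}_N$, which follows from $R_{\lambda_m},R_{\lambda_m}^*\in N$ exactly as you indicate.
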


This theorem says, roughly speaking, that if $X$ admits a ``projectional skeleton'' from projections whose ranges are just WLD (not necessarily separable), then $X$ has also a ``proper'' projectional skeleton inducing the same subspace of the dual. 
We do not know whether the same holds for commutative skeletons.

The rest of the paper is organized as follows: In Section~\ref{S:Jordan} we collect some basic facts on Jordan Banach algebras and their important subclasses. Section~\ref{S:proj} is devoted to projections in \JBWs{}s. The main purpose of that section is to prove Propositions~\ref{P:skel1} and~\ref{P:skel2}. They are the first step towards a proof of Theorem~\ref{T:main} and roughly say that in the respective preduals there are families of projections satisfying the assumptions of Theorem~\ref{T:lepeni}. Section~\ref{S:elem} contains a brief exposition of the method of elementary submodels and several
auxiliary results needed later. In the last section the method of elementary submodels is used to prove Theorem~\ref{T:lepeni}
and finally Theorem~\ref{T:main}.

Our notation is mostly standard. We only point out that for a mapping $f$ we distinguish $f(x)$ -- the value of $f$ at $x$ --
and $f[A]$ -- the image of the set $A$ under the mapping $f$. This distinction is necessary due to the use of set-theoretical tools.

\section{Jordan Banach algebras}\label{S:Jordan}

In this section we collect basic definitions and properties of Jordan algebras which are needed in the formulations and
proofs of our results. We use namely the books \cite{joa,AS,nonasoc} and the paper \cite{edwards}.

A \emph{Jordan algebra} is a real or complex algebra $\A=(\A,+,\circ)$, non-associative in general, which satisfies moreover the following two axioms:

\begin{itemize}

	\item $x\circ y=y\circ x$ for $x,y\in\A$,

	\item $(x\circ x)\circ(x\circ y)=x\circ(y\circ(x\circ x))$ for $x,y\in\A$.

\end{itemize}
If $\A=(\A,+,\cdot)$ is an associative algebra, the \emph{special Jordan product} on $\A$ is defined by $x\circ y=\frac12(x\cdot y+y\cdot x)$. Then $(\A,+,\circ)$ is a Jordan algebra. A \emph{Jordan subalgebra of $\A$} is a subalgebra of $(\A,+,\circ)$, i.e.\ a linear subspace of $\A$ closed under the special Jordan product.
Any algebra isomorphic to a Jordan subalgebra of an associative algebra  is called a \emph{special Jordan algebra}.
We will use several times the Shirshov-Cohn theorem \cite[Theorem 2.4.14]{joa} which says that any Jordan algebra generated by two elements (and $1$ if it is unital) is special.

An important further operation in Jordan algebras is the \emph{Jordan triple product} defined by the formula
$$\{xyz\}=(x\circ y)\circ z+x\circ(y\circ z)-(x\circ z)\circ y,\quad x,y,z\in\A.$$
A \emph{Jordan Banach algebra} is a real or complex Jordan algebra $\A$ equipped with a complete norm satisfying
$$\|x\circ y\|\le\|x\|\cdot\|y\|\mbox{ for }x,y\in \A.$$

A \emph{$JB$-algebra} is a real Jordan Banach algebra $\A$ satisfying moreover the following two axioms:

\begin{itemize}

	\item $\|x^2\|=\|x\|^2$ for $x\in \A$,

	\item $\|x^2\|\le \|x^2+y^2\|$ for $x,y\in\A$.

\end{itemize}

A \emph{$JB^\ast$-algebra} is a complex Jordan Banach algebra $\A$ equipped with an involution $^*$ and satisfying moreover the following two axioms:

\begin{itemize}

	\item $\|x^*\|=\|x\|$ for $x\in\A$,

	\item $\|\{xx^*x\}\|=\|x\|^3$ for $x\in\A$.

\end{itemize}
An element $x$ in a $JB^\ast$-algebra is called \emph{self-adjoint} if $x^*=x$. 
The \emph{self-adjoint part} of a $JB^\ast$-algebra is the real subalgebra consisting of all self-adjoint elements. 
The Jordan Banach $^*$-algebra associated with any \Ca{} is a $JB^\ast$-algebra. The self-adjoint part of any \Ca{} equipped with the Jordan product is a $JB$-algebra. The following Theorem explains the relationship of $JB$-algebras and $JB^\ast$-algebras. The first assertion is proved for example in \cite[Proposition 3.8.2]{joa}, the second one, that is much more complicated, was  proved by J.D.M.Wright in \cite[Theorem 2.8]{wright} for unital algebras.
The non-unital case can be proved using the procedure of adding a unit, see \cite[Theorem 3.3.9]{joa}.

\begin{thm}\label{L:JB-JB*}
\
\begin{itemize} 

	\item The self-adjoint part of any $JB^\ast$-algebra is a $JB$-algebra.

	\item Any $JB$-algebra is isomorphically isometric to the self-adjoint part of a unique $JB^\ast$-algebra.

\end{itemize}

\end{thm}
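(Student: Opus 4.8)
The plan is to treat the two assertions separately, since they are of very different depth. For the first assertion, I would begin with the purely algebraic remark that, the involution being a conjugate-linear map satisfying $(x\circ y)^*=x^*\circ y^*$ (multiplicativity being automatic because $\circ$ is commutative), the set $\A_{sa}$ of self-adjoint elements is a norm-closed real Jordan subalgebra and hence a real Jordan Banach algebra in its own right. To verify the first $JB$-axiom $\|x^2\|=\|x\|^2$ for $x\in\A_{sa}$, I would compute the Jordan triple product from its definition and use power-associativity to obtain $\{xxx\}=x\circ x^2=x^3$; the $JB^\ast$-axiom then yields $\|x^3\|=\|x\|^3$. Combining $\|x\|^3=\|x^3\|=\|x\circ x^2\|\le\|x\|\,\|x^2\|$ with $\|x^2\|=\|x\circ x\|\le\|x\|^2$ gives $\|x\|^2\le\|x^2\|\le\|x\|^2$, i.e.\ $\|x^2\|=\|x\|^2$.

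The second $JB$-axiom $\|x^2\|\le\|x^2+y^2\|$ encodes monotonicity of the norm on positive elements and is the genuinely analytic part of the first assertion. Here I would set up a continuous functional calculus for a single self-adjoint element: by the Shirshov--Cohn theorem the unital subalgebra generated by one self-adjoint $a$ is special, in fact associative and commutative, and using $\|a^2\|=\|a\|^2$ one identifies it isometrically with an algebra of real-valued continuous functions on $\sigma(a)$. From this representation one reads off that the squares are exactly the positive elements, that $0\le u\le v$ forces $\|u\|\le\|v\|$, and that $x^2\le x^2+y^2$, whence the desired inequality follows at once.

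The second assertion is the hard direction. Given a $JB$-algebra $M$, the candidate is the \emph{complexification} $\A=M\oplus iM$ with involution $(x+iy)^*=x-iy$ and the Jordan product extended complex-bilinearly; its self-adjoint part is then canonically $M$. The whole difficulty is to equip $\A$ with a complete norm extending that of $M$ for which the $JB^\ast$-axiom $\|\{zz^*z\}\|=\|z\|^3$ holds — the naive $\ell^2$ or $\ell^\infty$ combinations of $\|x\|$ and $\|y\|$ do \emph{not} work. I would follow Wright's route via the structure theory: split $M$ into a special part, which embeds in the self-adjoint part of a $C^\ast$-algebra and whose complexification inherits the $C^\ast$-norm, and a purely exceptional part built from the Albert algebra $H_3(\mathbb{O})$, which is handled by hand. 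For the non-unital case one passes to the unitization, applies the unital result, and restricts back, checking that the unitization of a $JB$-algebra is again a $JB$-algebra. Uniqueness is comparatively soft: any isometric Jordan isomorphism between self-adjoint parts extends to a complex-linear Jordan $^*$-isomorphism of the complexifications, and such a $^*$-isomorphism is automatically isometric, so the norm is forced.

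I expect the main obstacle to be precisely the construction of the $JB^\ast$-norm in the second assertion: producing an intrinsic formula for $\|x+iy\|$, or equivalently controlling the norm on the exceptional summand, is exactly where the depth of Wright's theorem lies. By contrast, the entire first assertion reduces to the single-element functional calculus, and the uniqueness part of the second assertion rests only on standard automatic-isometry facts for Jordan $^*$-isomorphisms.
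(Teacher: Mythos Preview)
The paper does not itself prove this theorem: it simply records the result and cites references --- Hanche-Olsen and St{\o}rmer \cite[Proposition 3.8.2]{joa} for the first assertion, Wright \cite[Theorem 2.8]{wright} for the second in the unital case, and \cite[Theorem 3.3.9]{joa} for the passage to non-unital algebras via unitization. Your sketch is a faithful outline of precisely those cited arguments, and your assessment of where the depth lies (Wright's construction of the $JB^\ast$-norm on the complexification) matches the paper's own remark that the second assertion ``is much more complicated''.

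Two small technical comments on your first-assertion argument. First, for a \emph{single} generator you do not need Shirshov--Cohn: power-associativity, which is immediate from the Jordan identity, already makes the subalgebra generated by one element associative and commutative (the paper invokes \cite[Lemma 2.4.5]{joa} for this). Shirshov--Cohn is the two-generator statement. Second, the monotonicity ``$0\le u\le v\Rightarrow\|u\|\le\|v\|$'' is a two-element assertion and is not literally read off from one functional-calculus representation; the standard route is to apply the single-element calculus twice: once to $v$ to obtain $v\le\|v\|\cdot 1$ (unitizing if necessary), and once to $u$ to conclude $\|u\|\le\|v\|$ from $0\le u\le\|v\|\cdot 1$. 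This is exactly how the cited reference proceeds, so your plan is sound once this is made explicit.
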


If $\A$ is a $JB^\ast$-algebra and $x\in\A$ is a self-adjoint element, the closed Jordan subalgebra $C(x)$ generated by $x$ is associative (this follows from \cite[Lemma 2.4.5]{joa}) and hence it is a commutative \Ca{} (this easily follows from the axioms). Therefore, a continuous functional calculus makes sense. An  element of a $JB^*$-algebra (resp. $JB$-algebra)  is  \emph{positive} if it is of the form $x^2$, where $x$ is a self-adjoint element. 
The cone of positive elements induces a partial  order  on a JB algebra (resp. self-adjoint part of a $JB^*$-algebra) in a natural way: $x\le y$ if $y-x$ is positive. 

Further, a \emph{\JBW{}} is a $JB$-algebra which is linearly isometric to the dual of a (real) Banach space, and similarly, a \emph{\JBWs{}} is a $JB^\ast$-algebra which is linearly isometric to the dual of a (complex) Banach space. 

\JBW{}s are thoroughly studied in \cite[Chapter 4]{joa}. The definition used there is different -- it is said that a $JB$-algebra $\m$ is a \JBW{} if it is \emph{monotonically complete}, i.e., if any bounded increasing net in $\m$ admits a least upper bound in $\m$, and admits a separating set of normal functionals.  A bounded linear functional $x^*$ on $\m$ is called \emph{normal} if $x^*(x_\alpha)\to x^*(x)$ for each increasing net $(x_\alpha)$ with supremum $x$. However, it is proved in \cite[Theorem 4.4.16]{joa} that a $JB$-algebra is monotonically complete and has separating set of normal functionals if and only if it is isometric to a dual space. Hence, the two definitions coincide. Moreover, the predual is unique and is formed by the normal functionals. Moreover, any \JBW{} is unital by \cite[Lemma 4.1.17]{joa}.

Unital \JBWs{}s were introduced and studied in \cite{edwards}. However, the assumption that the algebra has a unit is not restrictive, since any \JBWs{} is unital. Indeed, it was proved e.g. by Youngson in \cite[Corollary 10]{youngson} that a $JB^\ast$-algebra has a unit exactly when its closed unit ball has an extreme point. Therefore any dual $JB^\ast$-algebra has a unit because its unit ball is weak$^*$-compact and so it admits an extreme point.

The relationship of \JBW{}s and \JBWs{}s is described in the following lemma.
First we recall some definitions. A functional $\varphi$ on a $JB^*$-algebra $A$ is called  self-adjoint if $\varphi(x)=\overline{\varphi(x^*)}$ for all $x\in A$. In other words, a functional is self-adjoint if it takes real values on self-adjoint elements. A functional on a 
$JB^*$-algebra or a JBW algebra is called \emph{positive} if it takes positive values on positive elements.  A \emph{state} is a positive norm one functional. 

In the rest of this section $\m$ will denote a fixed \JBWs{}, $\m_*$ its predual, $\saM$ the self-adjoint part of $\m$
(which is a \JBW) and $\psaM$ the self-adjoint part of $\m_*$ (which is identified with the predual of $\saM$ by the following Lemma~\ref{L:JBW-JBW*}). Further, $\m_+$ will denote the positive cone of $\m$. The following lemma is essentially well known to experts in Jordan Banach algebras and it can be derived from the results of \cite{edwards}. But we have not found anywhere explicit formulation and proof of the assertions (ii) and (iii) which are very useful to easily transfer results on \JBWs{}s to \JBW{}s and vice versa. That's why we give a proof. 

\begin{lem}\label{L:JBW-JBW*}

Let $\m$ be a \JBWs{} and $\m_*$ be its predual. Moreover, let $\saM$ denote the self-adjoint part of $\m$ and $\psaM$ denote the self-adjoint part of $\m_*$. Then the following assertions hold.

\begin{itemize}

	\item[(i)] $\saM$ is weak$^*$-closed in $\m$ and hence it is a \JBW.

  \item[(ii)] The operator $\phi:\psaM\to(\saM)_*$ defined by $\phi(\omega)=\omega|_{\saM}$ is an onto linear isometry of real Banach spaces.

  \item[(iii)] The operator $\psi:\saM\times\saM\to \m$ defined by $\psi(x,y)=x+iy$ is an onto real-linear weak$^*$-to-weak$^*$ homeomorphism.

\end{itemize}

\end{lem}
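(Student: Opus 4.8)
The plan is to prove the three assertions of Lemma~\ref{L:JBW-JBW*} in the given order, using the decomposition of a $JB^*$-algebra and its predual into self-adjoint and skew-adjoint parts. For assertion~(i), I would first observe that the involution $^*$ on $\m$ is weak$^*$-continuous. This should follow from the uniqueness of the predual together with the fact that $^*$ is a (conjugate-linear) isometry: indeed, $x\mapsto x^*$ is a conjugate-linear isometric bijection of $\m$, so it is weak$^*$-continuous provided we check that its predual adjoint maps $\m_*$ into itself, or more simply by noting that the involution is the identity composed with complex conjugation on the normal functionals. Once $^*$ is weak$^*$-continuous, $\saM=\{x\in\m: x^*=x\}$ is the fixed-point set of a weak$^*$-continuous map, hence weak$^*$-closed; being norm-closed and a Jordan subalgebra, it is itself a $JB$-algebra which is a dual space, i.e.\ a \JBW{}.

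For assertion~(iii), the map $\psi(x,y)=x+iy$ is clearly real-linear and onto, since every element $z\in\m$ decomposes as $z=\frac12(z+z^*)+i\cdot\frac{1}{2i}(z-z^*)$ with both summands self-adjoint. Injectivity is immediate from uniqueness of the real/imaginary decomposition. The content is that $\psi$ and $\psi^{-1}$ are weak$^*$-to-weak$^*$ continuous; but the coordinate projections $z\mapsto\frac12(z+z^*)$ and $z\mapsto\frac1{2i}(z-z^*)$ are built from the identity and the weak$^*$-continuous involution established in~(i), so both $\psi$ and its inverse are weak$^*$-continuous. Since $\saM\times\saM$ carries the product weak$^*$-topology and these are bijections between compact-generated dual spaces, $\psi$ is the desired real-linear weak$^*$-homeomorphism.

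For assertion~(ii), I would define $\phi(\omega)=\omega|_{\saM}$ for $\omega\in\psaM$ and show it is an onto linear isometry onto $(\saM)_*$. Linearity is clear. To see $\phi$ lands in $(\saM)_*$ (the normal functionals on $\saM$), I would use that normality is preserved under restriction to the weak$^*$-closed subalgebra $\saM$, using~(i). Surjectivity and isometry are the crux: given a normal functional on $\saM$, I would extend it to $\m$ by using the real/imaginary decomposition from~(iii), setting $\tilde\omega(x+iy)=\omega(x)+i\,\omega(y)$, and verify this is a self-adjoint normal functional restricting back to $\omega$. The isometry should follow from the fact that for a self-adjoint functional $\omega$ on $\m$, its norm is already attained on the self-adjoint part $\saM$; this is a standard feature of self-adjoint functionals on $JB^*$-algebras, where $\|\omega\|=\sup\{|\omega(x)|:x\in\saM,\ \|x\|\le1\}$.

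The main obstacle I expect is the isometry claim in~(ii), namely that restricting a self-adjoint normal functional to the self-adjoint part does not decrease its norm. In the von Neumann case this rests on the polar decomposition of normal functionals; in the Jordan setting one must instead invoke the corresponding structure from \cite{edwards} or the functional-calculus and state-space machinery for \JBWs{}s. Establishing the weak$^*$-continuity of the involution cleanly in~(i) is a necessary preliminary that makes both~(ii) and~(iii) routine, so I would take care to justify it directly from uniqueness of the predual rather than leaving it implicit.
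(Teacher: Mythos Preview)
Your proposal is correct and arrives at the same conclusions via essentially the same ingredients as the paper, but with a different order of dependence. The paper cites \cite[Lemma 3.1, Theorem 3.2]{edwards} for (i), then proves (ii) by identifying $(\saM)_*$ as the quotient $(\m_*)_R/(\saM)_\perp$ and invoking \cite[Lemma 2.1]{edwards} for the isometry, and finally deduces (iii) from (ii) by testing $\psi^{-1}$ against functionals in $\psaM$. You instead establish weak$^*$-continuity of the involution up front, deduce (iii) immediately from that (the real and imaginary part maps are then manifestly weak$^*$-continuous), and only afterwards handle (ii), using (iii) for surjectivity. Your route to (iii) is arguably cleaner, since it avoids the implicit dependence on (ii) present in the paper's computation.

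One point to tighten: your appeal to ``uniqueness of the predual'' to get weak$^*$-continuity of $^*$ is not quite direct, because $^*$ is conjugate-linear and predual uniqueness is usually stated for complex-linear isometries. Your parenthetical alternative --- that the adjoint $\omega\mapsto\omega^*$, $\omega^*(x)=\overline{\omega(x^*)}$, preserves $\m_*$ because normality is an order-theoretic condition on $\saM$ and $\omega^*$ agrees with $\overline{\omega}$ there --- is the clean argument; commit to it rather than leaving both options open. For the isometry in (ii), you are right that this is the substantive step, and the fact you need (a self-adjoint functional on a $JB^*$-algebra attains its norm on the self-adjoint part) is exactly \cite[Lemma 2.1]{edwards}, the same citation the paper uses.
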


\begin{proof}

(i) It is proved in  \cite[Lemma 3.1]{edwards} that $\saM$ is weak$^*$-closed and then it is deduced in  \cite[Theorem 3.2]{edwards} that $\saM$ is a \JBW.

The assertions (ii) and (iii) essentially follow from the proof of \cite[Theorem 3.2]{edwards} using the general duality theory of Banach spaces. Indeed, if $X$ is a complex Banach space, denote by $X_R$ its real version (i.e., the same space considered as a real space). Then the operator $\theta_1: \m=(\m_*)^*\to((\m_*)_R)^*$ defined by
$$\theta_1(x)(\omega)=\Re x(\omega),\quad x\in \m, \omega\in\m_*,$$
is a real-linear isometry and weak$^*$-to-weak$^*$ homeomorphism. Hence, in particular, the dual of $(\m_*)_R$ is canonically isometric to $\m_R$. Since $\saM$ is weak$^*$ closed in $\m$ (by the assertion (i)) and hence also in $\m_R$, the predual of $\saM$ is the canonical quotient of $(\m_*)_R$ by $(\saM)_\perp$. Denote the canonical quotient mapping by $\theta_2$. 
Then $\theta_2$ can be expressed by the formula
$$\theta_2(\omega)(x)=\Re \omega(x),\quad \omega\in\m_*, x\in\saM,$$
hence the operator $\phi$ defined in the assertion (ii) is the restriction of $\theta_2$ to $\psaM$. It follows that $\phi$ is a linear isomorphism of real Banach spaces. Finally, it is an isometry due to \cite[Lemma 2.1]{edwards}. 
This completes the proof of the assertion (ii).

(iii) It is clear that $\psi$ is a real-linear bijection. To see that it is  weak$^*$-to-weak$^*$ continuous, it is enough to observe that for any $\omega\in\m_*$ and $x,y\in\saM$ we have
$$\omega(\psi(x,y))=\omega(x)+i\omega(y)=\Re\omega(x)+i\Im\omega(x)+i\Re\omega(y)-\Im\omega(y)$$ 
and that $\Re\omega,\Im\omega\in (\saM)_*$.

To see that the inverse of $\psi$ is weak$^*$-to-weak$^*$ continuous as well observe first that
$$\psi^{-1}(a)=\left(\frac{a+a^*}2,\frac{a-a^*}{2i}\right).$$ 
For any $\omega\in\psaM$ and $a\in\m$ we have
$$\omega\left(\frac{a+a^*}2\right)=\frac{\omega(a)+\overline{\omega(a)}}2$$
and
$$\omega\left(\frac{a-a^*}{2i}\right)=\frac{\omega(a)-\overline{\omega(a)}}{2i},$$
which proves the required continuity condition.
 \end{proof}

\section{Projections in \JBWs{}s}\label{S:proj}

The aim of this section is to prove Propositions~\ref{P:skel1} and~\ref{P:skel2}. They form one of the key steps to prove the main theorem. Proposition~\ref{P:skel1} together with Theorem~\ref{T:lepeni} implies that the predual of any \JBWs{} (or \JBW)
admits a $1$-projectional skeleton. Proposition~\ref{P:skel2} is a refinement of Proposition~\ref{P:skel1} and will enable us to construct a commutative $1$-projectional skeleton. A key tool in these results is (similarly as in \cite{BHK}) the notion of projection. Let us recall basic definitions.

An element $p$ of a \JBWs{} is said to be a \emph{projection} if $p^*=p$ and $p\circ p=p$. Similarly,
an element $p$ of a \JBW{} is called projection if $p\circ p=p$. In view of Lemma~\ref{L:JBW-JBW*} these two notions are compatible. I.e., if $\m$ is a \JBWs, then $p\in\m$ is a projection if and only if $p\in\saM$ and $p$ is a projection in the \JBW{} $\saM$. Hence, for projections in \JBWs{}s we may use the results from \cite[Section 4.2]{joa} on projections in \JBW{}s.
On the set of all the projections we consider the order inherited from $\saM$. In this order the projections form a complete lattice by \cite[Lemma 4.2.8]{joa}. Further, projections $p,q$ are called \emph{orthogonal} if $p\circ q=0$.

For a projection $p\in\m$ we define the operator $U_p$ on $\m$ by the formula
$$U_p(x)=(\{pxp\}=)2 p\circ(p\circ x)-p\circ x, \quad x\in\m.$$
The following lemma summarizes basic properties of the operator $U_p$. Most of them are known to experts, but we indicate the proof for the sake of completeness.

\begin{lem}\label{L1-Up}

Let $p\in\m$ be a projection. Then the following assertions are valid.
\begin{itemize}

	\item[(i)] $U_p$ is a weak$^*$-to-weak$^*$ continuous linear projection of norm one.

	\item[(ii)] $\saM$ is invariant for $U_p$.

	\item[(iii)] $U_p[\m]$ is a $JBW^*$-subalgebra of $\m$.

	\item[(iv)] If $x\in U_p[\m]\cap\saM$ and $y\in\saM$ is such that $0\le y\le x$, then $y\in U_p[\m]$.

	\item[(v)] Let $x\in\m$. Then $x\in U_p[\m]$ if and only if $p\circ x=x$.

	\item[(vi)] If $q\in \m$ is a projection such that $q\le p$, then $U_pU_q=U_qU_p=U_q$.

  \item[(vii)] $U_p^*[\m_*]\subset\m_*$.

  \item[(viii)] The positive cone of $\m$ is invariant for $U_p$ and the positive cone of $\m_*$ is invariant for $U_p^*$. 

  \item[(ix)] If $q\in\m$ is a projection, then $q\le p$ if and only if $U_q^*[\m_*]\subset U_p^*[\m_*]$.

  \item[(x)] If $q,r\in\m$ are projections such that $p,q,r$ are pairwise orthogonal, then $U_{p+q}U_{p+r}=U_p$.

\end{itemize}

\end{lem}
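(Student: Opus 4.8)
The plan is to read $U_p$ as the Peirce one-projection attached to $p$ and to reduce the delicate steps either to two-generator computations (licensed by the Shirshov-Cohn theorem, where $U_a(x)=axa$ in an associative overalgebra) or to the universally valid fundamental formula $U_{U_a(b)}=U_aU_bU_a$. Recall first the Peirce decomposition of $\m$ relative to $p$ (\cite[Section 4.2]{joa}): writing $T_p(x)=p\circ x$, the operator $T_p$ has spectrum contained in $\{0,\tfrac12,1\}$ and $\m=\m_1(p)\oplus\m_{1/2}(p)\oplus\m_0(p)$ splits into the corresponding eigenspaces, with $U_p=2T_p^2-T_p$ acting as the linear projection onto $\m_1(p)=\{x\in\m:p\circ x=x\}$. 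This immediately yields the idempotency of $U_p$ and assertion (v), and shows $U_p[\m]=\ker(I-U_p)$. For (i) it then remains to note that Jordan multiplication in a \JBWs{} is separately weak$^*$-continuous, so $U_p$ is weak$^*$-to-weak$^*$ continuous, and that $\|U_p\|\le\|p\|^2=1$ by the standard norm estimate for the quadratic operator, the reverse inequality holding because $U_p$ is a nonzero projection. Assertion (ii) is immediate from $(a\circ b)^*=a^*\circ b^*$ and $p^*=p$. For (vii) one simply observes that $U_p^*(\omega)=\omega\circ U_p$ is weak$^*$-continuous whenever $\omega$ is, hence lies in $\m_*$. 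Positivity in (viii) follows because, by Shirshov-Cohn applied to the subalgebra generated by $p$ and a self-adjoint $y$, $U_p(y^2)=\{py^2p\}=(py)(py)^*\ge0$; the predual statement is its transpose, using that the positive cone of $\m_*$ consists of the positive normal functionals.

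For the structural assertions (iii) and (iv) I would argue as follows. The range $U_p[\m]=\m_1(p)$ is weak$^*$-closed (being $\ker(I-U_p)$), is $^*$-invariant by (ii), and is a Jordan subalgebra by the Peirce multiplication rules; a weak$^*$-closed Jordan $^*$-subalgebra of a \JBWs{} is itself a \JBWs{}, which gives (iii). For (iv), let $0\le y\le x$ with $x\in U_p[\m]\cap\saM$. Then $x\ge0$ and $p\circ x=x$, so the support projection satisfies $s(x)\le p$; since $0\le y\le x$ forces $s(y)\le s(x)$, we get $s(y)\le p$, that is $p\circ y=y$ and $y\in U_p[\m]$.

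It remains to treat the multi-projection assertions (vi), (ix) and (x). For (vi), if $q\le p$ then $\m_1(q)\subseteq\m_1(p)$, so $U_q[\m]\subseteq U_p[\m]$ and hence $U_pU_q=U_q$; moreover $U_p(q)=q$, and the fundamental formula gives $U_q=U_{U_p(q)}=U_pU_qU_p=U_qU_p$. In (ix), note that $U_p^*$ is an idempotent on $\m_*$ whose range equals its fixed-point set, so the inclusion $U_q^*[\m_*]\subseteq U_p^*[\m_*]$ is equivalent to $U_p^*U_q^*=U_q^*$, i.e.\ to $U_qU_p=U_q$. If $q\le p$ this is part of (vi); conversely, evaluating $U_qU_p=U_q$ at the unit gives $U_q(p)=q$, whence $U_q(1-p)=0$, and a Shirshov-Cohn computation ($\{q(1-p)q\}=c^*c$ with $c=(1-p)q$) yields $q\circ(1-p)=0$, that is $q\le p$. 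Finally, for (x) I would use the simultaneous Peirce decomposition with respect to the pairwise orthogonal family $\{p,q,r,1-p-q-r\}$: on the resulting blocks $\m_{ij}$, both $U_{p+q}$ and $U_{p+r}$ act as the projection onto the blocks indexed within $\{p,q\}$, resp.\ $\{p,r\}$, and their composite keeps exactly the $(p,p)$-block, i.e.\ equals $U_p$; as a consistency check, the fundamental formula with $U_{p+q}(p+r)=p$ already gives $U_{p+q}U_{p+r}U_{p+q}=U_p$.

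The genuine obstacle throughout is the transition from the transparent associative formulas ($pxp$, $(pq)x(qp)$, and the like) to arbitrary---possibly exceptional---\JBWs{}s, where such substitutions are not a priori legitimate. Two-generator identities are salvaged verbatim by Shirshov-Cohn, and the crucial triple compositions are routed through the fundamental formula $U_{U_a(b)}=U_aU_bU_a$, which holds in every Jordan algebra; the intrinsically multi-projection statements (vi) and (x) must instead be grounded in the (simultaneous) Peirce calculus of \cite[Section 4.2]{joa} rather than in naive substitution, which would illegitimately presuppose speciality in more than two generators.
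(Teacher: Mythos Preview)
Your argument is essentially correct and follows a more structural route than the paper. The paper proceeds almost entirely by citation---idempotency and (v) from \cite[(2.61)--(2.62)]{joa}, (iv) and the subalgebra property from \cite[Lemma 4.1.13]{joa}, (vi) from \cite[Proposition 2.26]{AS}, positivity from \cite[Proposition 3.3.6]{joa}, and (x) by a direct expansion after first showing $q\circ U_r(x)=0$ for orthogonal $q,r$. You instead build everything on the Peirce decomposition, the fundamental formula $U_{U_a(b)}=U_aU_bU_a$, and two-generator speciality. This is more self-contained and makes the mechanism visible; in particular your observation that $U_p(q)=q$ plus the fundamental formula already yields \emph{both} identities in (vi) (since $U_q=U_pU_qU_p$ and $U_p^2=U_p$ give $U_pU_q=U_qU_p=U_q$ by multiplying on either side) is cleaner than what you wrote---the preliminary claim $\m_1(q)\subset\m_1(p)$ is then redundant, which is good because as stated it is not one line.

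Two places deserve a word of caution. In (viii) you write $U_p(y^2)=(py)(py)^*\ge0$; for this you need not just algebraic speciality but an embedding into a $C^*$-algebra so that $zz^*\ge0$ and positivity transfers back---i.e.\ the $JC$-version of the two-generator theorem (available in \cite{joa}), not the bare Shirshov--Cohn statement. In (iv) your step ``$0\le y\le x$ forces $s(y)\le s(x)$'' is true but of the same depth as (iv) itself; to avoid circularity you should justify it, and the cleanest way is again a two-generator $JC$ computation: from $U_{1-p}(x)=0$ and $0\le U_{1-p}(y)\le U_{1-p}(x)$ one gets $U_{1-p}(y)=0$, and in a $C^*$-overalgebra $(1-p)y(1-p)=0$ with $y\ge0$ forces $(1-p)y=0$, hence $p\circ y=y$. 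With these two clarifications your proof stands; your treatment of (x) via simultaneous Peirce blocks is a legitimate alternative to the paper's explicit expansion.
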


\begin{proof}
It is clear that $U_p$ is a linear operator and that $U_p(x^*)=U_p(x)^*$ for $x\in\m$, in particular $\saM$ is invariant of $U_p$. Hence the assertion (ii) is proved. $U_p$ is a projection by \cite[(2.61) on p. 46]{joa}.
The weak$^*$-to-weak$^*$ continuity of $U_p$ on $\saM$ follows from \cite[Corollary 4.1.6]{joa}, the weak$^*$-to-weak$^*$ continuity on $\m$ then follows from Lemma~\ref{L:JBW-JBW*}(iii) using the already proved assertion (ii).
Hence, the assertion (vii) follows. 
To complete the proof of the assertion (i) it remains to show that $\|U_p\|\le1$.
Since $\|p\|=1$ and $U_p(x)=\{pxp\}$ for each $x\in \m$, the estimate follows from the inequality $\|\{xyz\}\|\le \|x\|\cdot\|y\|\cdot\|z\|$ (see \cite[Proposition 3.4.17]{nonasoc}).

The `if' part of (v) is obvious, the `only if' part follows from \cite[(2.62) on p. 46]{joa}.
Further, by \cite[Lemma 4.1.13]{joa} we get the assertion (iv) and that $U_p[\saM]$ is a $JBW$-subalgebra of $\saM$.
It follows that $U_p[\m]=U_p[\saM]+i U_p[\saM]$ is a $JBW^*$-subalgebra of $\m$ (using Lemma~\ref{L:JBW-JBW*}(iii)),
which proves the assertion (iii).
The assertion (vi) is proved in \cite[Proposition 2.26]{AS}.
The positive cone of $\m$ is invariant for $U_p$ by \cite[Proposition 3.3.6]{joa} applied to the algebra $\saM$.
The invariance of the positive cone of $\m_*$ then easily follows and (viii) is proved. 

(ix) If $q\le p$, then it follows from (vi) that $U_q^*=U_p^*U_q^*$, hence $U_q^*[\m_*]\subset U_p^*[\m_*]$.
Conversely, suppose that $U_q^*[\m_*]\subset U_p^*[\m_*]$. Then clearly $U_p^*|_{\m_*} U_q^*|_{\m_*}=U_q^*|_{\m_*}$, hence $U_qU_p=U_q$. It follows that
$$U_q(1-p)=U_q U_p(1-p)=U_q(p-p)=0,$$
hence $q\le p$ by \cite[Lemma 4.2.2(iv)$\Rightarrow$(iii)]{joa}.

(x) First observe that whenever $q,r$ are mutually orthogonal projections, then $q\circ U_r(x)=0$ for each $x\in\m$.
Indeed, $r+q$ is a projection and $r+q\ge r$, hence
$$q\circ U_r(x)= (q+r)\circ U_r(x) - r \circ U_r(x)= U_r(x)-U_r(x)=0$$
by (vi) and (v). It follows that
$$\begin{aligned}
U_{p+q}(U_{p+r}(x)) 
&= 2 (p+q)\circ( (p+q)\circ U_{p+r}(x))- (p+q)\circ U_{p+r}(x)
\\&= 2 (p+q)\circ( p\circ U_{p+r}(x))- p\circ U_{p+r}(x)
\\&= U_p(U_{p+r}(x))+ 2 q\circ (p\circ U_{p+r}(x))
= U_p(x).
\end{aligned}$$
Indeed, the first equality is just a definition of $U_{p+q}$, the second follows from mutual orthogonality of $q$ and $p+r$,
the third one follows from the definition of $U_p$. Finally, to show the fourth equality it is enough to observe that $p\circ U_{p+r}(x)\in U_{p+r}[\m]$ by (iii).
\end{proof}

The following lemma follows from \cite[Corollary 2.6]{rec3}, even though it is not completely obvious for non-experts.
Since the proof in \cite{rec3} uses advanced structural results on Jordan algebras, we give a more direct and elementary proof.

\begin{lem}\label{L:directed}
Let $(p_n)$ be an increasing sequence of projections in $\m$ with supremum $p$. 
Then for each $\varrho\in\m_*$ we have $U_{p_n}^*\varrho\to U_p^*\varrho$ in norm.
\end{lem}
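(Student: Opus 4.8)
The plan is to reduce the statement to a convergence result in a commutative subalgebra, where projections behave like indicator functions and everything becomes classical measure theory. The key observation is that for each fixed $\varrho\in\m_*$ it suffices to prove $U_{p_n}^*\varrho\to U_p^*\varrho$ in norm; since the $U_{p_n}^*$ are norm-one projections (Lemma~\ref{L1-Up}(i)), one can work with the positive functionals first and then extend by linearity. For a positive $\varrho$ the natural quantities to control are the scalars $\varrho(U_p(1)-U_{p_n}(1))=\varrho(p-p_n)$, where I am using $U_p(1)=p$ and monotonicity of the sequence $p_n\nearrow p$. So first I would establish that $\varrho(p-p_n)\to0$. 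This is exactly normality of $\varrho$: the predual of a \JBW{} consists of normal functionals (as recalled in Section~\ref{S:Jordan}), so $\varrho(p_n)\to\varrho(p)$ because $p_n\nearrow p$ is an increasing net with supremum $p$.

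\medskip

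Next I would bound the norm difference $\|U_{p_n}^*\varrho-U_p^*\varrho\|$ by the scalar $\varrho(p-p_n)$ (up to a constant) for positive $\varrho$. The mechanism is a Cauchy--Schwarz type inequality for the triple product together with positivity. Concretely, writing $q_n=p-p_n$ (a projection, since $p_n\le p$ and the projections form a lattice), one has the decomposition, for any $x\in\m$ with $\|x\|\le 1$,
\begin{equation*}
U_p(x)-U_{p_n}(x)=\{p x p\}-\{p_n x p_n\},
\end{equation*}
and this can be rewritten using $p=p_n+q_n$ and bilinearity of the triple product in the outer variables into terms each containing a factor $q_n$. Applying $\varrho$ and then a Cauchy--Schwarz estimate of the form $|\varrho(\{q_n x p\})|\le \varrho(\{q_n q_n\})^{1/2}\varrho(\{p x x^* p\})^{1/2}$ — valid because $\varrho$ is positive and $\{\cdot\,\cdot\,\cdot\}$ gives a positive sesquilinear form — bounds every cross term by a constant times $\varrho(q_n)^{1/2}=\varrho(p-p_n)^{1/2}$. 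Taking the supremum over $\|x\|\le1$ then yields $\|U_p^*\varrho-U_{p_n}^*\varrho\|\le C\,\varrho(p-p_n)^{1/2}\to0$.

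\medskip

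The main obstacle I anticipate is making the Cauchy--Schwarz step fully rigorous in the non-associative Jordan setting, where one cannot freely manipulate products as in a \C-algebra. The clean way around this is to invoke the Shirshov--Cohn theorem: the element $x$ together with the projections involved generates a special Jordan algebra, so inside that subalgebra the triple product genuinely comes from an associative product $\{abc\}=\tfrac12(abc+cba)$, and the classical operator Cauchy--Schwarz inequality applies. One must check that the positive functional $\varrho$ restricted to the relevant generated subalgebra still behaves as a positive form, which follows from Lemma~\ref{L1-Up}(viii) (invariance of the positive cone under $U_p$ and $U_p^*$). An alternative, perhaps cleaner, route is to realize $\varrho$ via a GNS-type construction as a vector state on a Hilbert space, reducing the whole estimate to the statement that $p_n\to p$ strongly implies the corresponding vector-state differences tend to zero; the normality established in the first step gives precisely the strong convergence needed. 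Either way, once the estimate for positive $\varrho$ is in hand, the general case follows by decomposing an arbitrary $\varrho$ into a linear combination of positive functionals and using the uniform bound $\|U_{p_n}^*\|\le1$.
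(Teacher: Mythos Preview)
Your overall strategy is exactly the paper's: reduce to normal states, show $\varrho(p-p_n)\to0$ by normality, and bound $\|U_p^*\varrho-U_{p_n}^*\varrho\|$ by a constant times $\varrho(p-p_n)^{1/2}$ via Cauchy--Schwarz. The difficulty you anticipate is real, and your proposed fixes do not close it.

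The Shirshov--Cohn argument does not apply: that theorem only guarantees speciality for the subalgebra generated by \emph{two} elements (and the unit), whereas the expression $\{p\,x\,p\}-\{p_n\,x\,p_n\}$ involves the three elements $p$, $p_n$, $x$. With three generators the Jordan algebra need not be special, so you cannot pass to an associative product. Your triple-product Cauchy--Schwarz inequality is not well-formed as written (the arguments $\{q_nq_n\}$ and $\{pxx^*p\}$ are not triple products), and if you expand $\{q_n x p_n\}=(q_n\circ x)\circ p_n+q_n\circ(x\circ p_n)$ and apply the genuine Jordan Cauchy--Schwarz $|\varrho(a\circ b)|^2\le\varrho(a^*\circ a)\,\varrho(b^*\circ b)$, the term $(q_n\circ x)\circ p_n$ produces a factor $\varrho(p_n)^{1/2}$, which does not tend to $0$. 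The GNS alternative is likewise incomplete: exceptional \JBW{}s (of type $I_3$ over the octonions) admit no Jordan embedding into $B(H)_{sa}$, so one cannot in general realise $\varrho$ as a vector state in an associative setting.

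The paper bypasses all of this with a purely Jordan-algebraic identity. Since $p_n\le p$, the projections $p$ and $p_n$ \emph{operator commute}, i.e.\ $p\circ(p_n\circ x)=p_n\circ(p\circ x)$ for every $x$. Using this one checks directly that
\[
U_p(x)-U_{p_n}(x)=2\,(p-p_n)\circ\bigl((p+p_n)\circ x\bigr)-(p-p_n)\circ x,
\]
so the whole difference has the form $(p-p_n)\circ(\text{bounded element})$. Now the standard inequality $|\varrho(q\circ y)|^2\le\varrho(q)\,\|y\|^2$ (valid for any projection $q$ and any state $\varrho$) gives $\|U_p^*\varrho-U_{p_n}^*\varrho\|\le5\,\varrho(p-p_n)^{1/2}$. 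The missing ingredient in your sketch is precisely this operator-commutativity step, which replaces the problematic cross term $(q_n\circ x)\circ p_n$ by $q_n\circ(p_n\circ x)$.
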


\begin{proof} Since any $\varrho\in\m_*$ is a linear combination of four normal states
(this follows from Lemma~\ref{L:JBW-JBW*} and \cite[Proposition 4.5.3]{joa}), it is enough to prove the convergence 
  in case $\varrho$ is a normal state. Hence assume that $\varrho$ is a normal state. If $q\in\m$ is any projection,
 the Cauchy-Schwarz inequality yields
   \begin{equation}\label{eq:CS} |\varrho(q\circ x)|^2\le\varrho(q\circ q^*)\varrho(x^*\circ x)\le \varrho(q) \|x\|^2,\quad x\in\m.\end{equation} 
  Fix $n\in\N$. Observe that $p$ and $p_n$ operator commute, i.e., 
  $$p\circ(p_n\circ x)=p_n\circ (p\circ x),\quad \mbox{ for }x\in\m.$$
  Indeed, this follows from \cite[Lemma 2.5.5(ii)$\Rightarrow$(i)]{joa} as $p_n\in U_p[\m]$ due to Lemma~\ref{L1-Up}(iv) and hence
  $p\circ p_n = U_p(p_n)=p_n$ due  to Lemma~\ref{L1-Up}. Therefore we have for each $x\in\m$
$$\begin{aligned} U_p(x)&-U_{p_n}(x)= 2 p \circ (p \circ x) - p\circ x - 2 p_n \circ (p_n \circ x) + p_n\circ x 
 \\&
 = 2[ p \circ (p \circ x)- p_n \circ (p \circ x) + p \circ (p_n \circ x) -p_n\circ (p_n\circ x)]
 -p\circ x + p_n\circ x\,
 \\&=2 (p-p_n)\circ ((p+p_n)\circ x)-(p-p_n)\circ x.\end{aligned}$$
Hence, combining this with \eqref{eq:CS} we get
$$\begin{aligned} | U_p^*\varrho(x)-  U_{p_n}^*\varrho(x) |&= |\varrho(U_p x-U_{p_n}x)|
\\&\le 2\varrho(p-p_n)^{1/2}\cdot\|(p+p_n)\circ x\| + \varrho(p-p_n)^{1/2}\cdot\|x\|
\\&\le 5\varrho(p-p_n)^{1/2}\|x\|,\end{aligned}$$
therefore
$$\| U_p^*\varrho-  U_{p_n}^*\varrho\|\le 5 \varrho(p-p_n)^{1/2}.$$
Since $\varrho(p-p_n)\to 0$ by normality, we conclude that $U_{p_n}^*\varrho\to U_p^*\varrho$ in norm
and the proof is completed.
\end{proof}

\begin{lem}\label{L:nosic} Let $\omega\in\m_*$ be a normal state.

\begin{itemize}

	\item[(i)] There exists the smallest projection in $\m$ such that $\omega(p_\omega)=1$. (It is called the \emph{support of $\omega$}.)

	\item[(ii)] $\omega(x)=\omega(p_\omega\circ x)$ for each $x\in\m$.

	\item[(iii)] Let $x\in\m_+$ be such that $\omega(x)=0$. Then $p_\omega\circ x=0$.  

\end{itemize}

\end{lem}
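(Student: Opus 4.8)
The plan is to build the support $p_\omega$ as $1$ minus the largest projection that $\omega$ annihilates, and to drive everything by two observations. The first is a direct consequence of the Cauchy--Schwarz inequality \eqref{eq:CS}: if $q\in\m$ is a projection with $\omega(q)=0$, then for every $x\in\m$ we get $|\omega(q\circ x)|^2\le\omega(q)\|x\|^2=0$, so $\omega(q\circ x)=0$. The second is the remark isolated inside the proof of Lemma~\ref{L1-Up}(x): mutually orthogonal projections $q,r$ satisfy $q\circ U_r(y)=0$ for every $y\in\m$. Besides these I would use the existence of range (support) projections of positive elements, available in the monotone complete algebra $\m$ through the functional calculus, together with the normality of $\omega$.

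First I would prove (i). Put $\mathcal E=\{e\in\m: e\text{ a projection},\ \omega(e)=0\}$, which is nonempty since $0\in\mathcal E$. The decisive step is that $\mathcal E$ is closed under finite joins. Given $e,f\in\mathcal E$, set $a=e+f\ge0$ and let $r$ be its range projection; writing $f_n$ for the spectral projection of $a$ corresponding to $[\tfrac1n,\|a\|]$, one has $f_n\le n\,a$ and $f_n\nearrow r$, so $\omega(f_n)\le n\,\omega(a)=0$ and normality gives $\omega(r)=\sup_n\omega(f_n)=0$. By the Shirshov--Cohn theorem the Jordan subalgebra generated by $e,f$ is special, and there the range projection of $e+f$ is exactly the projection onto $\overline{\mathrm{ran}\,e+\mathrm{ran}\,f}$, i.e.\ $e\vee f$; since joins and range projections are computed intrinsically, $r=e\vee f$ in $\m$, whence $e\vee f\in\mathcal E$. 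Thus $\mathcal E$ is upward directed, the increasing net it forms has supremum $e_0:=\sup\mathcal E$ (the projections form a complete lattice), and normality yields $\omega(e_0)=\sup_{e\in\mathcal E}\omega(e)=0$. Hence $e_0$ is the largest element of $\mathcal E$. Setting $p_\omega:=1-e_0$ we obtain $\omega(p_\omega)=1$, and for any projection $p$ with $\omega(p)=1$ we have $1-p\in\mathcal E$, so $1-p\le e_0$, i.e.\ $p\ge p_\omega$; this is exactly the minimality required in (i).

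Assertion (ii) is then immediate: $1-p_\omega$ is a projection with $\omega(1-p_\omega)=0$, so the first observation gives $\omega((1-p_\omega)\circ x)=0$, that is $\omega(x)=\omega(p_\omega\circ x)$, for every $x\in\m$. For (iii), let $x\in\m_+$ with $\omega(x)=0$ and let $s$ be the range projection of $x$. Exactly as in (i) (spectral projections $f_n\le n\,x$ and normality) I get $\omega(s)=0$, so $s\in\mathcal E$ and therefore $s\le e_0=1-p_\omega$, which forces $p_\omega\circ s=0$, i.e.\ $p_\omega$ and $s$ are orthogonal. Since $x=U_s(x)$, the orthogonality remark from the proof of Lemma~\ref{L1-Up}(x) applied with $q=p_\omega$, $r=s$, $y=x$ gives $p_\omega\circ x=p_\omega\circ U_s(x)=0$, as required.

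The hard part will be the closure of $\mathcal E$ under joins in (i): it is what forces one to bring in range projections and the spectral calculus in $\m$, and to verify through Shirshov--Cohn that the range projection of $e+f$ is genuinely the lattice join $e\vee f$ and that this is computed the same way in any subalgebra containing $e$ and $f$. Once join-closure is secured, the supremum argument via normality delivers $p_\omega$, and (ii) and (iii) reduce to \eqref{eq:CS}, normality, and the already established properties of the operators $U_p$.
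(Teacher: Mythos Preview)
Your argument is correct and considerably more self-contained than the paper's. The paper cites \cite[Lemma~5.1]{AS} for (i), gives exactly your Cauchy--Schwarz computation for (ii), and for (iii) again passes through the range projection $r(x)$ but quotes \cite[Proposition~2.15]{AS} for $\omega(r(x))=0$ and \cite[Proposition~2.16]{AS} for the implication $r(x)\perp p_\omega\Rightarrow x\circ p_\omega=0$. You replace all three external citations by arguments internal to the paper: the explicit construction of $p_\omega$ as $1-\sup\mathcal E$, the spectral estimate $f_n\le nx$ combined with normality, and the orthogonality remark isolated in the proof of Lemma~\ref{L1-Up}(x). The trade-off is a longer proof in exchange for independence from \cite{AS}.

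One step should be tightened. The Shirshov--Cohn detour for $r(e+f)=e\vee f$ is not rigorous as written: that theorem embeds the subalgebra generated by $e,f,1$ into an \emph{abstract} associative algebra with no Hilbert-space realization, so the phrase ``projection onto $\overline{\operatorname{ran}e+\operatorname{ran}f}$'' has no meaning there; moreover that subalgebra is only norm-closed, so neither $r(e+f)$ nor $e\vee f$ need lie in it, and ``computed intrinsically'' is precisely the assertion requiring proof. The identity is, however, immediate inside $\m$ via Lemma~\ref{L1-Up}. Since $0\le e\le e+f\in U_{r(e+f)}[\m]$, part~(iv) gives $e\in U_{r(e+f)}[\m]$, hence $r(e+f)-e=U_{r(e+f)}(1-e)\ge 0$ by~(viii); similarly $f\le r(e+f)$, so $e\vee f\le r(e+f)$. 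Conversely, $e\le e\vee f$ and parts~(v),(vi) give $(e\vee f)\circ e=e$, likewise for $f$, so $(e\vee f)\circ(e+f)=e+f$ and therefore $r(e+f)\le e\vee f$. With this replacement your proof of (i) goes through cleanly.
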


\begin{proof} The assertion (i) is proved in \cite[Lemma 5.1]{AS}. 

Let us prove the assertion (ii). For each $x\in\m$ the Cauchy-Schwarz inequality yields
$$|\omega((1-p_\omega)\circ x)|^2\le \omega((1-p_\omega)\circ(1-p_\omega))\cdot\omega(x^*\circ x)= \omega(1-p_\omega)\cdot\omega(x^*\circ x)=0,$$
hence $\omega(x)=\omega(p_\omega\circ x)$. 

To prove (iii) suppose that $x\in\m_+$ and $\omega(x)=0$. Denote by $r(x)$ the range projection of $x$ (i.e., the smallest projection satisfying $r(x)\circ x=x$, see \cite[Lemma 4.2.6]{joa}). Then $\omega(r(x))=0$ by \cite[Proposition 2.15]{AS}.
Hence $\omega(1-r(x))=1$, so $1-r(x)\ge p_\omega$. It follows that $r(x)\circ p_\omega=0$. Since $r(p_\omega)=p_\omega$,
\cite[Proposition 2.16]{AS} shows that $x\circ p_\omega=0$ and the proof is completed.
\end{proof}

A projection $p\in\m$ is said to be \emph{$\sigma$-finite} if any orthogonal system of smaller projections is countable.
The following lemma characterizes $\sigma$-finite projections. A similar result in a different setting is given in \cite[Theorem 3.2]{ER}.

\begin{lem}\label{L:sigmaf} Let $p\in\m$ be a nonzero projection. Then $p$ is $\sigma$-finite if and only if $p=p_\omega$ for a normal state $\omega\in\m_*$.

\end{lem}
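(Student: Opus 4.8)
The statement is an equivalence whose two implications are of rather different character, so the plan is to treat the easy direction first.

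\emph{Sufficiency.} Suppose $p=p_\omega$ for a normal state $\omega$, and let $(q_i)_{i\in I}$ be an orthogonal family of nonzero projections with $q_i\le p$. First I would observe that $\omega(q_i)>0$ for every $i$: if $\omega(q_i)=0$ for some $i$, then $p-q_i$ is a projection strictly below $p=p_\omega$ with $\omega(p-q_i)=\omega(p)-\omega(q_i)=1$, contradicting the minimality of the support in \lemref{L:nosic}(i). Next, for any finite $F\subset I$ the element $\sum_{i\in F}q_i$ is again a projection (a finite sum of pairwise orthogonal projections), hence $\sum_{i\in F}q_i\le 1$ and therefore $\sum_{i\in F}\omega(q_i)=\omega(\sum_{i\in F}q_i)\le\omega(1)=1$. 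Thus for each $n$ at most $n$ indices satisfy $\omega(q_i)>1/n$, so $I=\{i:\omega(q_i)>0\}$ is countable and $p$ is $\sigma$-finite.

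\emph{Necessity.} This is the substantial direction. The idea is to manufacture, from $\sigma$-finiteness, a single normal state whose support is exactly $p$, by gluing together countably many states supported in small pieces of $p$. The first ingredient is a compression step: given any nonzero projection $q\le p$, I would use that $\m$ admits a separating family of normal states to pick a normal state $\psi$ with $\psi(q)>0$, and then form $\psi'=\psi(q)^{-1}U_q^*\psi$. By \lemref{L1-Up}(vii),(viii) this is again a normal positive functional, and since $U_q(1)=q$ and $U_q(q)=q$ one computes $\psi'(1)=1$ and $\psi'(1-q)=0$; thus $\psi'$ is a normal state with $0\ne p_{\psi'}\le q$. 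Consequently every nonzero subprojection of $p$ dominates the support of some normal state.

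With this at hand I would invoke Zorn's lemma to choose a maximal orthogonal family $(q_i)_{i\in I}$ of nonzero projections below $p$, each being the support of a normal state $\omega_i$. Since $p$ is $\sigma$-finite, $I$ is countable. Maximality then yields $\sup_i q_i=p$: otherwise $p-\sup_i q_i$ would be a nonzero subprojection of $p$, and the compression step would produce a further support orthogonal to all $q_i$, contradicting maximality. I would then set $\omega=\sum_i\lambda_i\omega_i$ with weights $\lambda_i>0$ and $\sum_i\lambda_i=1$; since the partial sums converge in norm and $\m_*$ is norm-closed, $\omega$ is a normal state. It remains to identify $p_\omega$ with $p$. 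From $q_i\le p$ and $\omega_i(q_i)=1$ one gets $\omega_i(p)=1$, hence $\omega(p)=\sum_i\lambda_i=1$ and so $p_\omega\le p$. Conversely, $\omega(1-p_\omega)=0$ together with positivity forces $\omega_i(1-p_\omega)=0$, i.e.\ $\omega_i(p_\omega)=1$, for every $i$; minimality of $q_i=p_{\omega_i}$ gives $q_i\le p_\omega$, whence $p=\sup_i q_i\le p_\omega$. Therefore $p_\omega=p$.

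The main obstacle I anticipate is the necessity direction, and specifically the compression construction realizing small subprojections as supports of normal states; this is where the structure of $\m$ (through the operator $U_q$ and its properties in \lemref{L1-Up}) really enters, whereas the passage from the countable family to a single state and the final support computation are comparatively formal once normality of the glued state and the identities $U_q(1)=U_q(q)=q$ are in hand.
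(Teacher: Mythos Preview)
Your proof is correct and follows essentially the same approach as the paper: both directions match in structure, using the compression $\psi\mapsto\psi(q)^{-1}U_q^*\psi$ to produce normal states with small support, Zorn's lemma to build a maximal orthogonal family (countable by $\sigma$-finiteness), and a convex series to assemble a single state with support $p$. The only cosmetic difference is in verifying $p_\omega=p$: you argue $q_i\le p_\omega$ via $\omega_i(1-p_\omega)=0$, while the paper checks directly that $\omega(q)>0$ for every nonzero $q\le p$; both are straightforward.
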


\begin{proof} Suppose first that $p=p_\omega$ for a normal state $\omega$. Let $q\le p$ be any nonzero projection. Then $\omega(q)>0$ since otherwise $p-q$ would be a projection strictly smaller than $p$ with $\omega(p-q)=1$. By a standard argument we obtain that $p$ is $\sigma$-finite. 

To prove the converse observe first that for any nonzero projection $p$ there is a normal state $\omega$ with $p_\omega\le p$. Indeed, let $\omega_0$ be a normal state such that $\omega_0(p)>0$. Set $\omega=\frac1{\omega_0(p)}U_p^*(\omega_0)$. Then $\omega$ is a positive functional by Lemma~\ref{L1-Up}(viii). Moreover, $\omega(1)=\omega(p)=1$, hence $\omega$ is a normal state and $p_\omega\le p$. 

Now, given any $\sigma$-finite projection $p$, by the previous paragraph and Zorn lemma we get a sequence of normal states $(\omega_n)$ such that their supports $p_{\omega_n}$ are pairwise orthogonal and their sum is $p$. Let $\omega=\sum_{n=1}^\infty 2^{-n}\omega_n$. Then $\omega$ is a normal state and $\omega(p)=1$. Moreover, $p=p_\omega$ as
$\omega(q)>0$ for each nonzero projection $q\le p$. (Indeed, suppose that $\omega(q)=0$. It follows that $\omega(1-q)=1$, hence $1-q\ge p_{\omega_n}$. It follows that $1-q\ge p$, hence $q\le 1-p$.)
\end{proof}

The following lemma establishes $\sigma$-completeness of the lattice of $\sigma$-finite projections.  A similar result in a different setting is given in \cite[Theorem 3.4]{ER}.

\begin{lem}\label{L:spojitost} Let $(p_n)$ be a sequence of $\sigma$-finite projections. Then its supremum is $\sigma$-finite as well.
\end{lem}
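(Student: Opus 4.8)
The plan is to realize the supremum $p=\sup_n p_n$ (which exists because the projections form a complete lattice) as the support of a single normal state, and then to conclude by \lemref{L:sigmaf}. If $p=0$ then every $p_n=0$ and $p$ is trivially $\sigma$-finite, so I may assume $p\ne0$ and set $N=\{n:p_n\ne0\}\ne\emptyset$. For $n\in N$ I use \lemref{L:sigmaf} to choose a normal state $\omega_n$ with $p_{\omega_n}=p_n$, put $c=\sum_{n\in N}2^{-n}$ and define
$$\omega=c^{-1}\sum_{n\in N}2^{-n}\omega_n.$$
Since $\|\omega_n\|=1$, this series converges absolutely in the norm-closed predual $\m_*$, so $\omega$ is again a normal functional; as $\omega\ge0$ and $\omega(1)=1$, it is a normal state.

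The heart of the matter is the identity $p_\omega=p$, which I would establish by two inequalities. For $p_\omega\le p$: each $\omega_n$ is positive and $p\ge p_{\omega_n}$, so $1\ge\omega_n(p)\ge\omega_n(p_{\omega_n})=1$, giving $\omega_n(p)=1$ for all $n\in N$ and hence $\omega(p)=1$; by the minimality of the support in \lemref{L:nosic}(i) this forces $p_\omega\le p$. For $p\le p_\omega$: from $\omega(p_\omega)=1=\omega(1)$ we get $\omega(1-p_\omega)=0$, and since $\omega-c^{-1}2^{-n}\omega_n$ is a positive functional and $1-p_\omega\ge0$,
$$0=\omega(1-p_\omega)\ge c^{-1}2^{-n}\,\omega_n(1-p_\omega)\ge0,$$
whence $\omega_n(p_\omega)=1$ and therefore $p_n=p_{\omega_n}\le p_\omega$ for every $n\in N$ (and trivially for $n\notin N$). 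Taking the supremum over $n$ yields $p\le p_\omega$. Combining the two inequalities gives $p=p_\omega$, so $p$ is the support of a normal state and \lemref{L:sigmaf} finishes the proof.

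The only genuinely delicate point, and the step I expect to be the main obstacle, is the inequality $p\le p_\omega$: it rests on the fact that the support of a countable convex combination of normal states dominates each individual support, which in turn uses the positivity of the remainders $\omega-c^{-1}2^{-n}\omega_n$ together with the support characterization $\omega(1-p_\omega)=0$. This is precisely the mechanism already exploited in the converse part of the proof of \lemref{L:sigmaf}; the new feature here is merely that the supports $p_n$ need no longer be pairwise orthogonal, but the computation of the support of $\omega$ is unaffected by this.
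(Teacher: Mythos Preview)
Your proof is correct and follows essentially the same route as the paper's: both construct $\omega$ as a convex series of the normal states $\omega_n$ supported on $p_n$, show $p_\omega\le p$ from $\omega(p)=1$, and then show $p\le p_\omega$ by deducing $\omega_n(1-p_\omega)=0$ from $\omega(1-p_\omega)=0$. Your treatment of the second inequality is in fact slightly more direct than the paper's (which passes through $q=p-p_\omega$ and an algebraic identity), and your explicit handling of the possibility $p_n=0$ is a small improvement in precision.
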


\begin{proof} Denote by $p$ the supremum of the sequence $(p_n)$.
By Lemma~\ref{L:sigmaf} there is a sequence of normal states $(\omega_n)$ such that $p_n=p_{\omega_n}$.
Let $\omega=\sum_{n=1}^\infty 2^{-n}\omega_n$. Then $\omega$ is a normal state. Moreover, since 
$$0\le\omega_n(1-p)\le\omega_n(1-p_n)=0,$$
for each $n\in\N$, we get $\omega(1-p)=0$ and hence $\omega(p)=1$, so $p_\omega\le p$. Set $q=p-p_\omega$.
Then $\omega(q)=0$, hence $\omega_n(q)=0$ for each $n$. Therefore we have for each $n\in\N$ $1-q\ge p_n$, hence $1-q\ge p$, so
$$p=p\circ(1-q)=p-p\circ q=p-p+p\circ p_\omega=p\circ p_\omega=p_\omega.$$
Hence $p$ is $\sigma$-finite by Lemma~\ref{L:sigmaf}.
\end{proof}

\begin{lem}\label{L:cover} Let $\omega\in\m_*$ be arbitrary. Then there is a $\sigma$-finite projection $p\in\m$ such that 
$\omega=U_p^*(\omega)$.
\end{lem}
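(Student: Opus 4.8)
The plan is to reduce the statement to the case of a normal state and then to take a supremum of finitely many support projections. Recall, as used in the proof of \lemref{L:directed}, that every $\varrho\in\m_*$ is a linear combination of four normal states. So I would write $\omega=\sum_{j=1}^4 c_j\omega_j$ with $\omega_1,\dots,\omega_4$ normal states and $c_1,\dots,c_4$ scalars. Since $U_p^*$ is linear, it then suffices to produce a single $\sigma$-finite projection $p$ that fixes each $\omega_j$.

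The key observation is that a normal state is fixed by the operator dual to $U_q$ for $q$ its own support. Concretely, for a normal state $\omega$ with support $p_\omega$ I would check that $U_{p_\omega}^*\omega=\omega$. This is a short computation: expanding $U_{p_\omega}(x)=2\,p_\omega\circ(p_\omega\circ x)-p_\omega\circ x$ and applying the identity $\omega(y)=\omega(p_\omega\circ y)$ from \lemref{L:nosic}(ii) twice --- once with $y=x$, giving $\omega(p_\omega\circ x)=\omega(x)$, and once with $y=p_\omega\circ x$, giving $\omega(p_\omega\circ(p_\omega\circ x))=\omega(p_\omega\circ x)=\omega(x)$ --- yields $\omega(U_{p_\omega}x)=2\omega(x)-\omega(x)=\omega(x)$. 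Moreover each support $p_{\omega_j}$ is $\sigma$-finite by \lemref{L:sigmaf}.

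I would then set $p=p_{\omega_1}\vee\dots\vee p_{\omega_4}$, which is again $\sigma$-finite by \lemref{L:spojitost} applied to the (finite) sequence of the four supports. Since $p_{\omega_j}\le p$, \lemref{L1-Up}(vi) gives $U_{p_{\omega_j}}U_p=U_{p_{\omega_j}}$; passing to adjoints and using the previous step, for each $j$ we get $U_p^*\omega_j=U_p^*U_{p_{\omega_j}}^*\omega_j=(U_{p_{\omega_j}}U_p)^*\omega_j=U_{p_{\omega_j}}^*\omega_j=\omega_j$. Summing over $j$ gives $U_p^*\omega=\omega$, as required. Note that by \lemref{L1-Up}(vii) the operator $U_p^*$ maps $\m_*$ into itself, so all these identities legitimately take place in $\m_*$.

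The two computations are routine, and I do not expect a genuine obstacle. The only point requiring a little care is the adjoint bookkeeping in the last step: one must invoke the relation $U_{p_{\omega_j}}U_p=U_{p_{\omega_j}}$ in this order, so that after passing to adjoints it reads $U_p^*U_{p_{\omega_j}}^*=U_{p_{\omega_j}}^*$, which is exactly what is needed to transport the fixed-point property from $U_{p_{\omega_j}}^*$ to $U_p^*$. Everything else reduces to \lemref{L:nosic}(ii), \lemref{L:sigmaf}, \lemref{L:spojitost}, and \lemref{L1-Up}(vi)--(vii).
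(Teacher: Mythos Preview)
Your proof is correct and follows essentially the same approach as the paper: decompose $\omega$ into four normal states, use \lemref{L:nosic}(ii) to see that each is fixed by $U_{p_{\omega_j}}^*$, and then take the supremum of the four supports. The only cosmetic difference is that the paper invokes \lemref{L1-Up}(ix) to pass from $p_{\omega_j}\le p$ to $\omega_j\in U_p^*[\m_*]$, whereas you unwind this directly via \lemref{L1-Up}(vi) and adjoints; the two arguments are equivalent.
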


\begin{proof} Let $\omega\in\m_*$ be arbitrary. Then there are four normal states $\omega_1,\dots,\omega_4$ and numbers $\alpha_1,\dots,\alpha_4\ge 0$ such that
$$\omega=\alpha_1\omega_1-\alpha_2\omega_2+i(\alpha_3\omega_3-\alpha_4\omega_4).$$
Set
$p_j=p_{\omega_j}$ for $j=1,\dots,4$. By Lemma~\ref{L:nosic}(ii) we have for each $j=1,\dots,4$ 
$$\omega_j(x)=\omega_j(p_j\circ x), \quad x\in \m,$$
hence clearly $\omega_j=U_{p_j}^*(\omega_j)$. Let $p$ be the supremum of the projections $p_1,\dots,p_4$. By Lemmata~\ref{L:sigmaf} and~\ref{L:spojitost} the projection $p$ is $\sigma$-finite. Moreover, $\omega_j\in U_p^*[\m_*]$
by Lemma~\ref{L1-Up}(ix). Thus $\omega\in U_p^*[\m_*]$.
\end{proof}

We continue with a proposition which is an analogue of \cite[Lemma 3.3]{BHK}. Recall that a Banach space is called \emph{weakly compactly generated} (shortly \emph{WCG}) if it contains a linearly dense weakly compact subset. WCG spaces form a subclass of WLD spaces by \cite[Proposition 2]{AL}. In the proof below we use the well-known easy fact that if there is a bounded linear operator from a Hilbert space to a Banach space $X$ with dense range, then $X$ is WCG (cf. \cite[Proposition 2.2]{BHK}).

\begin{pr}\label{L:wcg} Let $p\in\m$ be a $\sigma$-finite projection. Then $U_p^*[\m_*]$ is WCG. \end{pr}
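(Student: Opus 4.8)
The plan is to use the criterion recalled before the proposition (cf. \cite[Proposition 2.2]{BHK}): it suffices to exhibit a Hilbert space $H$ and a bounded linear operator $T\colon H\to U_p^*[\m_*]$ with dense range, and I would produce $T$ by a GNS-type construction. Since $p$ is a nonzero $\sigma$-finite projection, \lemref{L:sigmaf} yields a normal state $\omega\in\m_*$ with $p_\omega=p$. On $\m$ consider the form $\langle x,y\rangle=\omega(x^*\circ y)$. Writing $x=a+\iu b$ with $a,b\in\saM$ and using commutativity of the Jordan product one gets $x^*\circ x=a^2+b^2\in\m_+$, so the form is positive semidefinite; let $H$ be the Hilbert space completion of $\m$ modulo the null space $N=\set{x\in\m}{\omega(x^*\circ x)=0}$. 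From $\|x\|_H^2=\omega(x^*\circ x)\le\|x^*\circ x\|\le\|x\|^2$ the canonical map $j\colon\m\to H$ is norm-decreasing with dense range, and the Cauchy--Schwarz inequality $|\omega(x^*\circ y)|\le\|x\|_H\|y\|_H$ is available.

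Next I would set, for $x\in\m$, $S(x)=U_p^*(\omega\circ L_{x^*})$, where $L_{x^*}(y)=x^*\circ y$. The functional $\omega\circ L_{x^*}$ is normal: the multiplication operator $L_{x^*}$ is weak$^*$-to-weak$^*$ continuous (for self-adjoint elements this is the standard order-continuity already used in the proof of \lemref{L1-Up}(i), and the general case follows by linearity together with \lemref{L:JBW-JBW*}(iii)), while $\omega$ is normal. Applying $U_p^*$ keeps us in $\m_*$ by \lemref{L1-Up}(vii) and places $S(x)$ in $U_p^*[\m_*]$ by construction. Since $\|U_p^*\|\le1$ and $\|\omega\circ L_{x^*}\|=\sup_{\|y\|\le1}|\langle x,y\rangle|\le\|x\|_H$, the map $S$ is bounded from $(\m,\|\cdot\|_H)$ into $U_p^*[\m_*]$ and vanishes on $N$; hence it factors through $j$ and extends to a bounded operator $T\colon H\to U_p^*[\m_*]$.

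It then remains to show that the range of $T$ — the closed linear span of $\set{S(x)}{x\in\m}$ — is all of $U_p^*[\m_*]$. By the Hahn--Banach theorem it suffices to verify that any $z\in\m=(\m_*)^*$ annihilating every $S(x)$ induces the zero functional on $U_p^*[\m_*]$. Each $\rho\in U_p^*[\m_*]$ satisfies $\rho(z)=\rho(U_pz)$, so the functional induced by $z$ agrees with the one induced by $U_pz\in U_p[\m]$; I may therefore assume $z\in U_p[\m]$, in which case $U_pz=z$ and $S(x)(z)=\omega(x^*\circ z)$. Taking $x=z$ gives $\omega(z^*\circ z)=0$, and since $z^*\circ z\in\m_+\cap U_p[\m]$, \lemref{L:nosic}(iii) forces $p\circ(z^*\circ z)=0$; as $p\circ w=w$ for every $w\in U_p[\m]$ by \lemref{L1-Up}(v), we obtain $z^*\circ z=0$, hence $z=0$. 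Thus $T$ has dense range and $U_p^*[\m_*]$ is WCG. The main obstacle is not a single hard step but keeping the three objects compatible — the Hilbert seminorm on $\m$, the functionals $S(x)$, and the subspace $U_p^*[\m_*]$; the delicate points are the weak$^*$-continuity of $L_{x^*}$ (so that each $S(x)$ is genuinely normal) and locating the annihilator of the range inside the corner $U_p[\m]$ rather than in all of $\m$, where the support relation of \lemref{L:nosic}(iii) and the non-associativity of the Jordan product must be handled with care.
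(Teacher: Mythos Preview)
Your proof is correct and follows essentially the same approach as the paper. Your operator $S(x)=U_p^*(\omega\circ L_{x^*})$ is, up to the substitution $a=x^*$, exactly the paper's $\Phi(a)(y)=\omega(a\circ U_p(y))$, and the density argument via Hahn--Banach, the support lemma, and the GNS-type Hilbert space are the same; the only omission is the trivial case $p=0$, which the paper dispatches in one line.
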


\begin{proof} If $p=0$ the assertion is trivial. Suppose that $p\ne 0$ and let $\omega$ be a normal state such that $p=p_\omega$ provided by Lemma~\ref{L:sigmaf}. Let us define an operator $\Phi:\m\to \m_*$ by the following formula:
$$\Phi(a)(x)=\omega(a\circ U_p(x)), \quad a,x\in\m,$$
i.e., $\Phi(a)=U_p^*T_a^*\omega$, where the operator $T_a$ is defined by $x\mapsto a\circ x$. Since $T_a$ is weak$^*$-to-weak$^*$ continuous by \cite[Corollary 4.1.6]{joa}, it is clear that $\Phi$ is a linear operator mapping $\m$ into $\m_*$, in fact into $U_p^*[\m_*]$.

Let us further prove that the range of $\Phi$ is dense in $U_p^*[\m_*]$. We will use Hahn-Banach theorem. To do that, suppose that $x\in\m$ is such that $\Phi(a)(x)=0$ for each $a\in M$. Take $a=(U_p(x))^*=U_p(x^*)$. Then
$$0=\Phi(U_p(x^*))(x)=\omega(U_p(x^*)\circ U_p(x)).$$
As $U_p(x^*)\circ U_p(x)$ is positive, we obtain by Lemma~\ref{L:nosic} (iii) that $p\circ(U_p(x^*)\circ U_p(x))=0$, hence $U_p(x^*)\circ U_p(x)=0$ by Lemma~\ref{L1-Up}(iii),(v).
It follows that $U_p(x)=0$, hence $\varrho(x)=0$ for each $\varrho\in U_p^*[\m_*]$. Hence, the Hahn-Banach theorem yields the
density of the range of $\Phi$ in $U_p^*[\m_*]$.

Finally, we have by the Cauchy-Schwarz inequality
$$|\Phi(a)(x)|^2=|\omega(a\circ U_p(x))|^2\le \omega(a\circ a^*)\omega(U_p(x^*)\circ U_p(x))\le \omega(a\circ a^*)\|x\|^2,$$
hence $\|\Phi(a)\|\le \omega(a\circ a^*)^{1/2}$ for each $a\in \m$. Define $H_\omega$ to be the Hilbert space made
by the standard procedure of factorization and completion from $\m$ equipped with the semi-inner product $(x,y)\mapsto \omega(y^*\circ x)$. Then $\Phi$ induces a bounded linear map of $H_\omega$ into $U_p^*[\m_*]$ having dense range. 
This shows that $U_p^*[\m_*]$ is WCG.
\end{proof}

\begin{pr}\label{P:skel1} Let $\m$ be a \JBWs. Denote by $\Lambda$ the set of all nonzero $\sigma$-finite projections in $\m$ 
equipped with the standard order. For $p\in\Lambda$ let $Q_p$ denote the restriction of $U_p^*$ to $\m_*$.
Then $\Lambda$ is a directed set and the following conditions are fulfilled.
\begin{itemize}

	\item[(i)] $Q_p$ is a linear projection, $\|Q_p\|=1$ for each $p\in\Lambda$.

	\item[(ii)] $Q_p[\m_*]$ is WCG for each $p\in\Lambda$.

	\item[(iii)] If $p_1,p_2\in\Lambda$ are such that $p_1\le p_2$, then $Q_{p_1}Q_{p_2}=Q_{p_2}Q_{p_1}=Q_{p_1}$.

	\item[(iv)] If $p_1\le p_2\le \dots$ are in $\Lambda$, then $p=\sup_n p_n$ exists in $\Lambda$ and, moreover
	$Q_{p_n}\to Q_p$ is the strong operator topology, in particular $Q_p[\m_*]=\overline{\bigcup_n Q_{p_n}[\m_*]}$

	\item[(v)] $\m_*=\bigcup_{p\in\Lambda} Q_p[\m_*]$.

\end{itemize}
Moreover, $Q_p[\psaM]\subset\psaM$ and
$$\begin{aligned}\bigcup_{p\in\Lambda} Q_p^*[(\m_*)^*]&=\bigcup_{p\in\Lambda} U_p[\m]\\& =\{x\in \m:\exists p\in \m\mbox{ a $\sigma$-finite projection}: U_p(x)=x\}\\
&=\{x\in \m:\exists p\in \m\mbox{ a $\sigma$-finite projection}: p\circ x=x\},\\
\bigcup_{p\in\Lambda} Q_p^*[(\psaM)^*]&=\bigcup_{p\in\Lambda} U_p[\saM]\\& =\{x\in \saM:\exists p\in \m\mbox{ a $\sigma$-finite projection}: U_p(x)=x\}\\
&=\{x\in \saM:\exists p\in \m\mbox{ a $\sigma$-finite projection}: p\circ x=x\}.\end{aligned}$$
Further, $Q_{p_1}[\m_*]\subset Q_{p_2}[\m_*]$ if and only if $p_1\le p_2$.
\end{pr}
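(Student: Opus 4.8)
The plan is to verify each assertion by assembling the lemmas proved in this section; almost no new computation is required, as all the genuine content has already been isolated. First, I would check that $\Lambda$ is directed: the projections of $\m$ form a complete lattice, so any $p_1,p_2\in\Lambda$ admit a supremum, and that supremum is again $\sigma$-finite by \lemref{L:spojitost}; hence it lies in $\Lambda$ and dominates both. For (i), \lemref{L1-Up}(i) and (vii) say that $U_p^*$ is a norm-one linear projection carrying $\m_*$ into itself, so its restriction $Q_p$ is a linear projection on $\m_*$ with $\|Q_p\|\le1$; since $p\ne0$ the idempotent $Q_p$ is nonzero (e.g. $Q_p\omega$ is nonzero for a normal state $\omega$ with $p=p_\omega$), and a nonzero idempotent has norm at least one, whence $\|Q_p\|=1$. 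Assertion (ii) is exactly \prref{L:wcg}.

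For (iii), I would apply $*$ to the identities $U_{p_1}U_{p_2}=U_{p_2}U_{p_1}=U_{p_1}$ of \lemref{L1-Up}(vi) and restrict to $\m_*$, using (vii) to stay inside $\m_*$; this gives $Q_{p_2}Q_{p_1}=Q_{p_1}Q_{p_2}=Q_{p_1}$. For (iv), the supremum $p=\sup_n p_n$ exists in the complete lattice and is $\sigma$-finite by \lemref{L:spojitost}, so $p\in\Lambda$; the claimed strong convergence $Q_{p_n}\to Q_p$ is precisely the norm convergence $U_{p_n}^*\varrho\to U_p^*\varrho$ established in \lemref{L:directed}. The range identity then follows formally: by (iii) the subspaces $Q_{p_n}[\m_*]$ increase inside the closed subspace $Q_p[\m_*]$, while each $\varrho\in Q_p[\m_*]$ equals $\lim_n Q_{p_n}\varrho$ and so lies in the closure of their union. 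Assertion (v) is \lemref{L:cover} (the case $\omega=0$ being covered by choosing any nonzero $\sigma$-finite projection).

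It remains to treat the ``moreover'' formulas. Since $U_p$ commutes with the involution (observed in the proof of \lemref{L1-Up}), its adjoint preserves self-adjoint functionals, which gives $Q_p[\psaM]\subset\psaM$. Under the canonical identification $\m=(\m_*)^*$ a direct pairing computation shows $Q_p^*=U_p$ on $\m$, so $Q_p^*[(\m_*)^*]=U_p[\m]$; idempotency of $U_p$ yields $U_p[\m]=\{x\in\m:U_p(x)=x\}$, and \lemref{L1-Up}(v) rewrites this as $\{x\in\m:p\circ x=x\}$. Taking unions over $\sigma$-finite $p$ gives the first chain of equalities. I would obtain the second chain identically, now invoking \lemref{L:JBW-JBW*}(ii) to identify $(\psaM)^*$ with $\saM$ and \lemref{L1-Up}(ii) for the invariance of $\saM$ under $U_p$. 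Finally, since $Q_p[\m_*]=U_p^*[\m_*]$, the last equivalence $Q_{p_1}[\m_*]\subset Q_{p_2}[\m_*]\iff p_1\le p_2$ is exactly \lemref{L1-Up}(ix).

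The main obstacle is not analytic but bookkeeping: the ``moreover'' part requires correctly identifying $Q_p^*$ with $U_p$ (respectively its restriction to $\saM$) through the duality $\m=(\m_*)^*$ and the isometry $\psaM\cong(\saM)_*$ of \lemref{L:JBW-JBW*}(ii), and here the distinction between an operator and its various adjoints and restrictions is easy to mishandle. All the substantive analytic work has already been absorbed into \prref{L:wcg} (for the WCG property), \lemref{L:directed} (for the continuity in (iv)), and \lemref{L:cover} (for the covering (v)).
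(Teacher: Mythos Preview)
Your proposal is correct and follows essentially the same approach as the paper's own proof, which likewise just assembles the preceding lemmas (\lemref{L:spojitost} for directedness, \lemref{L1-Up}(i),(vii) for (i), \prref{L:wcg} for (ii), \lemref{L1-Up}(vi) for (iii), \lemref{L:directed} with \lemref{L:spojitost} for (iv), \lemref{L:cover} for (v), and \lemref{L1-Up}(ii),(v),(ix) together with $Q_p^*=U_p$ for the remaining assertions). Your write-up is in fact somewhat more careful than the paper's terse sketch, e.g.\ in spelling out why $\|Q_p\|\ge1$ and why the range identity in (iv) follows from the SOT-convergence.
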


\begin{proof}
$\Lambda$ is directed by Lemma~\ref{L:spojitost}. 
The assertion (i) follows from Lemma~\ref{L1-Up}(i),(vii); the assertion (ii) is proved in Proposition~\ref{L:wcg};
(iii) follows from Lemma~\ref{L1-Up}(vi), the assertion (iv) follows by using Lemma~\ref{L:directed} and Lemma~\ref{L:spojitost}
and the assertion (v) is proved in Lemma~\ref{L:cover}. The invariance of $\psaM$ follows by Lemma~\ref{L1-Up}(ii) using Lemma~\ref{L:JBW-JBW*}. The formulas follow from the fact that $Q_p^*=U_p$ for each $p\in\Lambda$ and from Lemma~\ref{L1-Up}(v).
The final equivalence is due to Lemma~\ref{L1-Up}(ix).
\end{proof}

\begin{pr}\label{P:skel2} Let $\m$ be a \JBWs. Then there is an orthogonal family of nonzero $\sigma$-finite projections $(p_\alpha)_{\alpha\in\Gamma}$ with sum equal to $1$. Denote by $\Lambda_0$ the family of all the nonempty countable subsets of $\Gamma$ ordered by inclusion. For any $C\in\Lambda_0$ define $p_C=\sum_{\alpha\in C} p_\alpha$ and define $R_C=Q_{p_C}$.

Then the system $R_C$, $C\in\Lambda_0$, enjoys all the properties of the system $Q_p$, $p\in\Lambda$, from Proposition~\ref{P:skel1}. Moreover, it is commutative, i.e., $R_{C_1}R_{C_2}=R_{C_2}R_{C_1}$; and
$$\bigcup_{C\in\Lambda_0} R_C^*[(\m_*)^*]=\bigcup_{p\in\Lambda} Q_p^*[(\m_*)^*]=\bigcup_{p\in\Lambda} U_p[\m].$$
\end{pr}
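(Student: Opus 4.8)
The plan is to first manufacture the orthogonal decomposition, then transfer the five properties of \prref{P:skel1} to the subsystem indexed by $\Lambda_0$, and finally read off commutativity from the orthogonal structure. To produce the family, I would take by Zorn's lemma a maximal orthogonal family $(p_\alpha)_{\alpha\in\Gamma}$ of nonzero $\sigma$-finite projections and set $p=\sup_\alpha p_\alpha$. If $p\ne 1$, then $1-p\ne 0$, and the argument inside the proof of \lemref{L:sigmaf} (every nonzero projection dominates the support of some normal state, and such a support is $\sigma$-finite) yields a nonzero $\sigma$-finite $q\le 1-p$; since $p_\alpha\le p$ forces $q\le 1-p\le 1-p_\alpha$, the projection $q$ is orthogonal to every $p_\alpha$, contradicting maximality. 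Hence $\sum_\alpha p_\alpha=1$. For $C\in\Lambda_0$ the projection $p_C=\sup_{\alpha\in C}p_\alpha$ is a supremum of a sequence of $\sigma$-finite projections, so it is $\sigma$-finite by \lemref{L:spojitost}; thus $p_C\in\Lambda$, $R_C=Q_{p_C}$ is well defined, and $\Lambda_0$ is directed by finite (indeed countable) unions.

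The one genuinely substantive auxiliary fact, which I would isolate at the outset, is that \emph{every $\sigma$-finite projection $p$ is dominated by some $p_C$ with $C\in\Lambda_0$.} For $p\ne 0$ write $p=p_\omega$ for a normal state $\omega$ (\lemref{L:sigmaf}). Applying normality to the net of finite partial sums of $(p_\alpha)$ gives $\sum_\alpha\omega(p_\alpha)=\omega(1)=1$; since all terms are nonnegative with finite sum, the set $C=\{\alpha:\omega(p_\alpha)>0\}$ is a nonempty countable subset of $\Gamma$. Then $\omega(p_C)=\sum_{\alpha\in C}\omega(p_\alpha)=1$, so by minimality of the support $p=p_\omega\le p_C$.

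With this covering fact available, the five properties follow. Properties (i), (ii), (iii) and the invariance $R_C[\psaM]\subset\psaM$ are inherited verbatim from \prref{P:skel1}, because each $p_C\in\Lambda$ and $C_1\subset C_2$ forces $p_{C_1}\le p_{C_2}$. For (iv) I would observe that an increasing chain $C_1\subset C_2\subset\cdots$ in $\Lambda_0$ has supremum $C=\bigcup_n C_n\in\Lambda_0$, that $p_C=\sup_n p_{C_n}$, and then invoke \prref{P:skel1}(iv). For (v), given $\omega\in\m_*$ I would use \lemref{L:cover} to get a $\sigma$-finite $p$ with $\omega=U_p^*\omega$, cover $p\le p_C$ by the auxiliary fact, and conclude $\omega\in U_p^*[\m_*]\subset U_{p_C}^*[\m_*]=R_C[\m_*]$ via \lemref{L1-Up}(ix). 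The same covering argument settles the union formula: its second equality is \prref{P:skel1}; the inclusion $\bigcup_C R_C^*[(\m_*)^*]\subset\bigcup_{p}U_p[\m]$ is trivial since $p_C\in\Lambda$; and for the reverse, given $x\in U_p[\m]$ with $p\in\Lambda$, choosing $p\le p_C$ and using \lemref{L1-Up}(vi) yields $U_{p_C}(x)=U_{p_C}U_p(x)=U_p(x)=x$, so $x\in U_{p_C}[\m]=R_C^*[(\m_*)^*]$.

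Finally, commutativity is where the orthogonality of the $p_\alpha$ is essential, and this is the step I expect to be the crux. For $C_1,C_2\in\Lambda_0$ put $A=C_1\cap C_2$, $B_1=C_1\setminus C_2$, $B_2=C_2\setminus C_1$, and let $p_A,p_{B_1},p_{B_2}$ be the corresponding partial sums; these are pairwise orthogonal projections with $p_{C_1}=p_A+p_{B_1}$ and $p_{C_2}=p_A+p_{B_2}$. Then \lemref{L1-Up}(x) gives $U_{p_{C_1}}U_{p_{C_2}}=U_{p_A}=U_{p_{C_2}}U_{p_{C_1}}$ (with $U_{p_A}=U_0=0$ when $A=\emptyset$), and passing to restrictions of adjoints to $\m_*$ yields $R_{C_1}R_{C_2}=U_{p_A}^*|_{\m_*}=R_{C_2}R_{C_1}$. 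The main obstacle is really only to recognize that \lemref{L1-Up}(x) is exactly tailored to this intersection/difference decomposition; once that is seen, the computation is immediate, and the remaining work is the routine bookkeeping described above.
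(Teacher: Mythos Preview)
Your proposal is correct and follows essentially the same route as the paper's proof: Zorn's lemma for the maximal orthogonal family, \lemref{L:spojitost} for $\sigma$-finiteness of $p_C$, the normal-state argument to show every $\sigma$-finite $p$ satisfies $p\le p_C$ for some countable $C$, and \lemref{L1-Up}(x) applied to the decomposition $p_{C_1}=p_{C_1\cap C_2}+p_{C_1\setminus C_2}$, $p_{C_2}=p_{C_1\cap C_2}+p_{C_2\setminus C_1}$ to get $R_{C_1}R_{C_2}=R_{C_1\cap C_2}=R_{C_2}R_{C_1}$. Your write-up is in fact more explicit than the paper's at several points (the maximality contradiction, the verification of the union formula), but the ideas are identical.
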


\begin{proof} Similarly as in the proof of Lemma~\ref{L:sigmaf} we see that for any nonzero projection $p\in\m$ there is a nonzero $\sigma$-finite projection $q\le p$. Therefore the existence of the system $(p_\alpha)_{\alpha\in\Gamma}$ follows from the Zorn lemma.
Further, it is clear that $\Lambda_0$ is directed. The projections $p_C$, $C\in\Lambda_0$ are $\sigma$-finite by Lemma~\ref{L:spojitost}.
Hence the analogues of assertions (i)--(iv) from Proposition~\ref{P:skel1} are obviously fulfilled, as well as the final equivalence. To prove the analogue of (v) and the equality it is enough to show that for any $p\in \Lambda$ there is $C\in\Lambda_0$ such that $p\le p_C$. So fix $p\in\Lambda$. Lemma~\ref{L:sigmaf} yields a normal state $\omega\in\m_*$ with 
$p=p_\omega$. Then it follows by normality of $\omega$ that
$$1=\omega(1)=\sum_{\alpha\in\Gamma}\omega(p_\alpha).$$
Let $C=\{\alpha\in\Gamma:\omega(p_\alpha)>0\}$. Then $C$ is countable, hence $C\in\Lambda_0$. Moreover, $\omega(p_C)=1$, hence
$p_C\ge p_\omega=p$.
Finally, to show the commutativity observe that Lemma~\ref{L1-Up}(x) implies $R_{C_1}R_{C_2}=R_{C_1\cap C_2}$ for any $C_1,C_2\in\Lambda_0$ (and $R_{C_1}R_{C_2}=0$ if $C_1\cap C_2=\emptyset$).
\end{proof}

\section{Method of elementary submodels}\label{S:elem}

In this section we briefly recall some basic facts concerning the method of elementary models which will be used to prove Theorem~\ref{T:lepeni} and the main theorem. This set-theoretical method can be used in various branches of mathematics. 
The use in topology was illustrated by A.~Dow in \cite{dow}, in functional analysis it was used by P.~Koszmider in \cite{kos05}. This method was later used by W. Kubi\'s in \cite{kubisSkeleton} to construct projectional skeletons in certain  Banach spaces. In \cite{cuth-fm} the method has been slightly simplified and specified, and it was used to proving separable reduction theorems. We briefly recall some basic facts (more details and explanations may be found e.g. in \cite{cuth-fm} and \cite{cuka-cejm}). We use the approach of \cite{cuth-fm}.

We start by recalling some definitions. Let $N$ be a fixed set and $\phi$ a formula in the language of the set theory. Then the {\em relativization of $\phi$ to $N$} is the formula $\phi^N$ which is obtained from $\phi$ by replacing each quantifier of the form ``$\forall x$'' by ``$\forall x\in N$'' and each quantifier of the form ``$\exists x$'' by ``$\exists x\in N$''.

If $\phi(x_1,\ldots,x_n)$ is a formula with all free variables displayed (i.e., a formula whose free variables are exactly $x_1,\ldots,x_n$) then  $\phi$ is said to be {\em absolute for $N$} if
\[
\forall a_1,\ldots,a_n\in N\quad (\phi^N(a_1,\ldots,a_n) \leftrightarrow \phi(a_1,\ldots,a_n)).
\]
A list of formulas, $\phi_1,\ldots,\phi_n$, is said to be {\em subformula closed} if  every subformula of a formula in the list is also contained in the list.

The method is mainly based on the following theorem (a proof can be found in \cite[Chapter IV, Theorem 7.8]{Kunen}).

\begin{thm}\label{T:countable-model}
Let $\phi_1, \ldots, \phi_n$ be any formulas and $Y$ any set. Then there exists a set $M \supset Y$ such that
$\phi_1, \ldots, \phi_n \text{ are absolute for } M$ and $|M| \leq \max(\aleph_0,|Y|)$.
\end{thm}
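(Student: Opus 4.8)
The plan is to prove the theorem by a Skolem hull (downward L\"owenheim--Skolem) construction, closing the given set $Y$ under witnesses for the existential quantifiers that occur in the formulas. First I would reduce to the case where the list $\phi_1,\ldots,\phi_n$ is \emph{subformula closed}: since each formula has only finitely many subformulas, I may enlarge the list to a still-finite list containing all subformulas, and this does not affect the conclusion. For such a list, absoluteness can be established by induction on the complexity of formulas via the Tarski--Vaught criterion. Atomic formulas ($x\in y$ and $x=y$) are absolute for every set $M$, and absoluteness is preserved under negation and conjunction, so the only genuinely nontrivial inductive step is the existential quantifier. If $\phi=\exists x\,\psi(x,y_1,\ldots,y_k)$ and $\psi$ is already absolute for $M$, then for parameters $a_1,\ldots,a_k\in M$ one has $\phi^M(\bar a)\leftrightarrow\exists x\in M\,\psi(x,\bar a)$, and this is equivalent to $\phi(\bar a)=\exists x\,\psi(x,\bar a)$ exactly when $M$ contains a witness whenever a witness exists in the universe.

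The construction is designed precisely to produce such witnesses. For each formula $\phi_i$ of the form $\exists x\,\psi_i(x,\bar y)$ appearing in the subformula-closed list, and each parameter tuple $\bar a$ for which $\exists x\,\psi_i(x,\bar a)$ holds, I must place some witness into $M$. To select these witnesses within ZFC, without appealing to a well-ordering of the whole universe, I would use the minimal-rank device: to each such pair $(\phi_i,\bar a)$ I associate the \emph{set} of witnesses of least possible von Neumann rank, which is a genuine set since its rank is bounded. At every stage of the construction the relevant family of witness-sets is indexed by a set, so ordinary choice suffices to pick one representative from each nonempty witness-set. I then define an increasing sequence by $Y_0=Y$ and $Y_{m+1}=Y_m$ together with one such chosen witness for every applicable formula $\phi_i$ and every parameter tuple drawn from $Y_m$, and I set $M=\bigcup_{m\in\mathbb{N}}Y_m$.

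Finally I would verify the two conclusions. For the cardinality bound, the passage from $Y_m$ to $Y_{m+1}$ adds at most one witness for each of the finitely many existential formulas and each finite tuple from $Y_m$, so $|Y_{m+1}|\le\max(\aleph_0,|Y_m|)$; by induction $|Y_m|\le\max(\aleph_0,|Y|)$ for every $m$, and a countable union of such sets again has cardinality at most $\max(\aleph_0,|Y|)$. For absoluteness, any parameter tuple from $M$ already lies in some $Y_m$, because a finite tuple meets the increasing union $M=\bigcup_m Y_m$ at a single stage; hence whenever an existential statement with those parameters is true in the universe, a witness was inserted at stage $m+1\subseteq M$. This is exactly the Tarski--Vaught condition, so the inductive argument above yields that each $\phi_i$ is absolute for $M$.

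The step I expect to be the main obstacle is the witness-selection mechanism: making the Skolem functions well defined as genuine set-sized operations in plain ZFC rather than tacitly assuming a global well-ordering of the universe. The minimal-rank-set device, combined with the observation that each stage of the construction requires only choices over a set, is what makes the argument go through cleanly while keeping the cardinality under control.
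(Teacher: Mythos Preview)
Your proposal is correct and follows the standard Skolem hull argument; the paper itself does not give a proof at all but simply cites \cite[Chapter IV, Theorem 7.8]{Kunen}, which is essentially the argument you outline. Your attention to the minimal-rank witness device to avoid global choice is appropriate and matches how Kunen handles it.
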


To be able to use Theorem~\ref{T:countable-model} effectively, we will use the following notation.

Let $\phi_1, \ldots, \phi_n$ be any formulas and $Y$ be any countable set.
Let $M \supset Y$ be a countable set such that $\phi_1, \ldots, \phi_n$ are absolute for $M$.
Then we say that $M$ is an \emph{elementary model for $\phi_1,\ldots,\phi_n$ containing $Y$}.
This is denoted by $M \prec (\phi_1,\ldots,\phi_n; Y)$.

The fact that certain formula is absolute for $M$ will always be used in order to satisfy the assumption of the following lemma from \cite[Lemma 2.3]{cuthRmoutilZeleny}. Using this lemma we can force the model $M$ to contain all the needed objects created (uniquely) from elements of $M$.

\begin{lem}\label{l:unique-M}
Let $\phi(y,x_1,\ldots,x_n)$ be a formula with all free variables shown and $Y$ be a countable set.
Let $M$ be a fixed set, $M \prec (\phi, \exists y \colon \phi(y,x_1,\ldots,x_n);\; Y)$, and
$a_1,\ldots,a_n \in M$ be such that there exists a set $u$ satisfying
$\phi(u,a_1,\ldots,a_n)$. Then there exists $u \in M$ such that $\phi(u,a_1,\ldots,a_n)$.
\end{lem}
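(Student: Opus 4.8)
The plan is to unwind the two layers of absoluteness that the hypothesis $M \prec (\phi,\ \exists y\colon\phi(y,x_1,\ldots,x_n);\ Y)$ provides, applying them to the parameters $a_1,\ldots,a_n\in M$. The essential point is that the hypothesis grants absoluteness for \emph{both} the formula $\phi$ itself and the existential formula $\exists y\colon\phi(y,x_1,\ldots,x_n)$. I would use the existential formula to locate a witness inside $M$, and then use absoluteness of $\phi$ to transfer the defining property of that witness back to the real universe.

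First I would record what absoluteness of the existential formula means. By the definition of relativization, $(\exists y\colon\phi(y,x_1,\ldots,x_n))^M$ is the formula $\exists y\in M\colon\phi^M(y,x_1,\ldots,x_n)$. Since $a_1,\ldots,a_n\in M$ and the existential formula is absolute for $M$, I obtain
\[
\bigl(\exists y\in M\colon\phi^M(y,a_1,\ldots,a_n)\bigr)\;\longleftrightarrow\;\bigl(\exists y\colon\phi(y,a_1,\ldots,a_n)\bigr).
\]
The right-hand side holds by the standing assumption that some set $u$ satisfies $\phi(u,a_1,\ldots,a_n)$. Hence the left-hand side holds as well, and I may fix some $u\in M$ with $\phi^M(u,a_1,\ldots,a_n)$.

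Second, I would invoke absoluteness of $\phi$ for $M$. Now all of $u,a_1,\ldots,a_n$ lie in $M$, so the defining equivalence for absoluteness gives $\phi^M(u,a_1,\ldots,a_n)\leftrightarrow\phi(u,a_1,\ldots,a_n)$. Combined with the previous step, this yields $\phi(u,a_1,\ldots,a_n)$ for the chosen $u\in M$, which is precisely the desired conclusion.

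The argument is a direct unwinding of definitions, so there is no genuine obstacle beyond careful bookkeeping. The one subtlety worth flagging is that absoluteness of $\phi$ alone would \emph{not} suffice: a witness $u$ guaranteed merely by the truth of $\exists y\colon\phi$ in the universe need not lie in $M$, and this is exactly why the hypothesis also demands absoluteness of the existential formula. This in turn explains why, in applications, Theorem~\ref{T:countable-model} must be applied to a \emph{subformula closed} list of formulas, so that whenever $\exists y\colon\phi$ is placed on the list, its subformula $\phi$ is automatically present too.
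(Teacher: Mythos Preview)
Your proof is correct and follows exactly the same two-step approach as the paper: first use absoluteness of the existential formula to obtain a witness $u\in M$ with $\phi^M(u,a_1,\ldots,a_n)$, then use absoluteness of $\phi$ to conclude $\phi(u,a_1,\ldots,a_n)$. Your version is more detailed in spelling out the relativization and includes a helpful remark on why both formulas are needed, but the argument is identical.
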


\begin{proof}Let us give here the proof just for the sake of completeness. Using the absoluteness of the formula $\exists u\colon \phi(u,x_1,\ldots,x_n)$ there exists $u\in M$ satisfying $\phi^M(u,a_1,\ldots,a_n)$.
Using the absoluteness of $\phi$ we get, that for this $u\in M$ the formula $\phi(u,a_1,\ldots,a_n)$ holds.
\end{proof}

We shall also use the following convention.

\begin{convention}
Whenever we say ``\emph{for any suitable model $M$ (the following holds \dots)}''
we mean that  ``\emph{there exists a list of formulas $\phi_1,\ldots,\phi_n$ and a countable set $Y$ such that for every $M \prec (\phi_1,\ldots,\phi_n;Y)$ (the following holds \dots)}''.
\end{convention}

By using this new terminology we loose the information about the formulas $\phi_1,\ldots,\phi_n$ and the set $Y$.
However, this is not important in applications.

The next lemma summarizes several properties of ``sufficiently large'' elementary models.

%

\begin{lem}\label{l:predp}
There are formulas $\theta_1,\dots,\theta_m$ and a countable set $Y_0$ such that any $M\prec(\theta_1,\ldots,\theta_m;\; Y_0)$ satisfies the following conditions:
\begin{itemize}

	\item[(i)] $\rr,\cc,\Q,\Q+i\Q,\zz,\nn\in M$ and the operations of the addition and multiplication on $\cc$ and the standard order on $\rr$ belong to $M$.

	\item[(ii)] If $f\in M$ is a mapping, then $\dom(f)\in M$, $\rng(f)\in M$ and $f[M]\subset M$. Further, for any $A\in M$ we have $f[A]\in M$ as well.

	\item[(iii)] If $A$ is finite, then $A\in M$ if and only if $A\subset M$.
   
  \item[(iv)] If $x_1,\dots,x_n$ are arbitrary, then $x_1,\dots,x_n\in M$ if and only if the ordered $n$-tuple $(x_1,\dots,x_n)$ is an element of $M$.
		
	\item[(v)] If $A\in M$ is a countable set, then $A\subset M$.

	\item[(vi)] If $A,B\in M$, then $A\cup B\in M$, $A\cap B\in M$, $A\setminus B\in M$.

	\item[(vii)] If $A,B\in M$, then $A\times B\in M$.
	
	\item[(viii)] If $X\in M$ is a real vector space, then $X\cap M$ is $\Q$-linear.

	\item[(ix)] If $X\in M$ is a complex vector space, then $X\cap M$ is $(\Q+i\Q)$-linear.

	\item[(x)] If $X\in M$ is a Banach space, then $X^*\in M$ as well.

	\item[(xi)] If $X,Y$ are Banach spaces and $T:X\to Y$ is a bounded linear operator such that $X,Y,T\in M$, then  $T^*\in M$ as well.

	\item[(xii)] If $X\in M$ is a separable metric space, then there is a dense countable set $C\subset X$ with $C\in M$.

	\item[(xiii)] If $\Gamma\in M$ is an up-directed set, then $\Gamma\cap M$ is also up-directed.
\end{itemize}
\end{lem}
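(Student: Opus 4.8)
The plan is to produce the finite list of formulas $\theta_1,\dots,\theta_m$ and the countable set $Y_0$ explicitly, and then to read off all thirteen assertions from a single scheme: every object named in (i)--(xiii) is either placed into $Y_0$ at the outset, or is definable --- uniquely, or at least provably existent --- by one of the listed formulas from parameters lying in $M$; in the latter case Lemma~\ref{l:unique-M} forces that object into $M$. Thus the whole proof is an orchestrated application of Lemma~\ref{l:unique-M}, the existence of $M$ being guaranteed by Theorem~\ref{T:countable-model} for any finite list of formulas and any countable $Y_0$.

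Concretely, I would put into $Y_0$ all the structures of (i): $\rr,\cc,\Q,\Q+i\Q,\zz,\nn$, the field operations on $\cc$, the order on $\rr$, together with $0$ and the successor map on $\nn$. Into the formula list I would put the defining formulas for the domain, range and evaluation of a function; the unordered pair $\{x,y\}$ and the singleton $\{x\}$; the union, intersection and difference of two sets; the Cartesian product; the Kuratowski tuple $(x_1,\dots,x_n)$ and its coordinate projections; the dual $X^*$ of a Banach space; the adjoint $T^*$ of a bounded operator; a countable dense subset of a separable metric space; and, for (xiii), the formula ``$c\in\Gamma$ and $a\le c$ and $b\le c$''. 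I would then close this list under subformulas and adjoin, for each such $\phi$, its existential closure $\exists y\,\phi(y,\dots)$, as Lemma~\ref{l:unique-M} requires; since only finitely many formulas are involved, the list stays finite.

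With these choices the verifications proceed in a dependency order. Assertion (i) is immediate from $Y_0\subset M$. For (ii) the domain, range and each value $f(x)$ are uniquely determined by $f$ (and $x$), so Lemma~\ref{l:unique-M} yields $\dom(f),\rng(f)\in M$ and $f(x)\in M$ for $x\in M$, whence $f[M]\subset M$, while $f[A]$, being determined by $f,A$, lies in $M$; assertions (iv),(vi),(vii),(x),(xi) follow the same ``unique definability'' pattern. For the countability statements I would first establish $\nn\subset M$ from $0\in M$ and closure under successor, and only then prove (v): a nonempty countable $A\in M$ admits a surjection $f\colon\nn\to A$, which lies in $M$ by Lemma~\ref{l:unique-M}, so $A=f[\nn]\subset f[M]\subset M$ by (ii). Assertion (iii) splits into two directions: if $A\subset M$ is finite then $A\in M$ by iterating singletons and unions (from (vi)); conversely, if $A\in M$ is finite it admits a bijection $f\colon\{0,\dots,n-1\}\to A$, placed in $M$ by Lemma~\ref{l:unique-M}, so $A=f[\dom f]\subset M$. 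The linearity statements (viii),(ix) use that the vector-space operations, being functions carried by the structure $X$, lie in $M$ by (ii), together with $\Q,\Q+i\Q\subset M$ by (v), so that $X\cap M$ is closed under the relevant rational combinations. Finally (xii) is the existential form of the scheme applied to a countable dense set, and (xiii) applies it to a common upper bound $c$ of $a,b$ inside $\Gamma$.

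The main point needing care is not any single computation but the global bookkeeping: one must keep the formula list finite while subformula-closed and carrying all the existential closures, and one must arrange the assertions in an order that avoids circularity --- in particular deriving $\nn\subset M$ directly before invoking it for general countable sets, and establishing (ii) before (iii), (v) and the vector-space assertions. The one genuinely conceptual observation, used repeatedly in (iii), (v), (xii) and (xiii), is that Lemma~\ref{l:unique-M} demands only the \emph{existence} of a witness and not its uniqueness; this is exactly what lets us pull non-canonical objects --- enumerations, bijections, dense subsets, common upper bounds --- into $M$.
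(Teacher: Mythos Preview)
Your proposal is correct and follows essentially the same approach as the paper: both assemble a finite subformula-closed list of defining formulas together with a countable set $Y_0$ containing the objects of (i), and then repeatedly invoke Lemma~\ref{l:unique-M} to pull the required objects into $M$. The only notable difference is that the paper delegates the first three clauses of (ii) and all of (iii)--(vi) to \cite[Propositions~2.9 and~2.10]{cuth-fm}, whereas you sketch these directly; your sketches are along exactly the lines of those cited proofs, so this is a matter of self-containment rather than a different method. One small imprecision: in (viii) the operations $+,\cdot$ on $X$ are extracted from the structure $X=\langle X,\rr,+,\cdot\rangle$ via (iv), not (ii), as the paper makes explicit.
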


\begin{proof} The list $\theta_1,\dots,\theta_m$ will be formed by all the formulas provided by the results quoted in this proof, the formulas marked below by $(*)$ and their subformulas. The set $Y_0$ will contain the respective countable sets provided by the quoted results and the sets specified in (i). 

Hence, (i) is satisfied. The validity of the first three assertions of (ii) follows from \cite[Proposition 2.9]{cuth-fm}. The last property follows (using Lemma~\ref{l:unique-M}) by the absoluteness of the formula
$$\exists B\; \forall x\;(x\in B \Leftrightarrow \exists y\in A : x=f(y))\eqno{(*)}$$
and its subformulas. The assertions (iii)--(vi) follow from \cite[Proposition 2.10]{cuth-fm}. The validity of (vii) follows  
(using Lemma~\ref{l:unique-M}) by the absoluteness of the formula
$$\exists C\; \forall x (x\in C\Leftrightarrow \exists y\in A\;\exists z\in B: x=(y,z))\eqno{(*)}$$
and its subformulas.

Let us prove (viii). Let $X$ be a real vector space belonging to $M$. Recall that $X$ is not just a set, but it is a quadruple
$\langle X,\rr,+,\cdot\rangle$. By (iv) we infer that the mappings $+:X\times X\to X$ and $\cdot:\rr\times X\to X$ belong to $M$ as well. By (i) and (v) we know that $\Q\subset M$. Hence, if $x\in X\cap M$ and $\lambda\in\Q$, then $\lambda x\in X\cap M$ by (iv) and (ii). Similarly, if $x,y\in X\cap M$, then $x+y\in X\cap M$. So, $X\cap M$ is $\Q$-linear.

The proof of (ix) is analogous.

(x) Let $X=\langle X,+,\cdot,\|\cdot\|\rangle\in M$. By (iv) we know that the mappings $+$, $\cdot$ and $\|\cdot\|$ belong to $M$ as well. Hence, by absoluteness of the formula
$$\begin{gathered}\exists X^*\; \forall f (f\in X^* \Leftrightarrow f\text{ is a linear functional on }X \\ \& \exists r\in \rr 
\forall x\in X (\|x\|\le 1\Rightarrow |f(x)|\le r)) \end{gathered}\eqno{(*)}$$
and its subformulas we get (using Lemma~\ref{l:unique-M}) that $X^*\in M$ as a set. Moreover, by (vii) we get $X^*\times X^*\in M$. Since the operations $+$ and $\cdot$ on $X^*$ can be uniquely described by  suitable formulas (we mark them by $(*)$), these operations belong to $M$ as well. Similarly we can achieve that the norm on $X^*$ belongs to $M$, hence $X^*\in M$ as a normed linear space by (iv).

(xi) By (x) we get $X^*,Y^*\in M$. By the absoluteness of the formula
$$\exists T^* (T^*:Y^*\to X^* \& \forall y^*\in Y^*: T^*(y^*)=y^*\circ T)\eqno{(*)}$$
and its subformulas we get $T^*\in M$ (using Lemma~\ref{l:unique-M}).

(xii) Let $X=\langle X,d\rangle$ be a separable metric space belonging to $M$. A countable dense subset of $X$ belonging to $M$ can be obtained by the absoluteness of the formula
$$\begin{gathered}\exists D (D\subset X \& \exists f (f \mbox{ is a mapping of $\nn$ onto }D)\\ \& \forall x\in X\forall r\in \rr(r>0\Rightarrow 
\exists y\in D :d(x,y)<r))\end{gathered}\eqno{(*)}$$
and its subformulas using Lemma~\ref{l:unique-M}.

(xiii) Let $\Gamma=(\Gamma,\le)$ be an up-directed set in $M$. Take $a,b\in \Gamma\cap M$. By the absoluteness of the formula
$$\exists c\in\Gamma: a\le c\& b\le c\eqno{(*)}$$
we can (using Lemma~\ref{l:unique-M}) find such a $c$ in $\Gamma\cap M$.
\end{proof}

\section{Amalgamating projectional skeletons}\label{S:final}

The aim of this section is to prove Theorem~\ref{T:lepeni} and Theorem~\ref{T:main}. It will be done using the method of elementary submodels described in the previous section. We will use some ideas and results from \cite{kubisSkeleton}.
Since our setting is a bit different (due to the fact that we use the more precise approach of \cite{cuth-fm}) and that we need more precise and stronger versions of the results, we indicate also the proofs.

The first lemma is a variant of \cite[Lemma 4]{kubisSkeleton} and shows the method of constructing projections using elementary submodels.

\begin{lem}\label{le:projekce}
For a suitable elementary model $M$ the following holds: Let $X$ be a Banach space and $D\subset X^*$ an $r$-norming subspace.
If $X\in M$ and $D\in M$, then the following hold:

\begin{itemize}
	\item $\overline{X\cap M}$ is a closed linear subspace of $X$;

	\item $\overline{X\cap M}\cap (D\cap M)_\perp=\{0\}$;

	\item the canonical projection of  $\overline{X\cap M}+(D\cap M)_\perp$ onto  $\overline{X\cap M}$ along $(D\cap M)_\perp$

	has norm at most $r$.

\end{itemize}
\end{lem}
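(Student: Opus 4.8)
The plan is to reduce all three assertions to a single norming statement about $X\cap M$ and then to one $\varepsilon$-estimate. First I would fix the ``suitable model'': take for the list of formulas those furnished by Lemma~\ref{l:predp} together with
$$\exists f\,\bigl(f\in D\wedge\nor f\le 1\wedge |f(x)|>q\bigr)$$
(free variables $x,q$) and all its subformulas, and let $Y_0$ absorb the corresponding countable sets. Since $X\in M$, Lemma~\ref{l:predp}(x)--(xi) places $X^*$, its norm and the evaluation pairing in $M$, while $\Q,\rr$ and the order on $\rr$ lie in $M$ by Lemma~\ref{l:predp}(i); thus the displayed formula is meaningful and absolute for $M$. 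The first bullet is then immediate: by Lemma~\ref{l:predp}(viii) (or (ix) in the complex case) $X\cap M$ is $\Q$-linear (resp.\ $(\Q+i\Q)$-linear), and density of the rationals in the scalar field makes $\overline{X\cap M}$ a closed linear subspace of $X$.

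The heart of the matter is the norming claim: for every $x\in X\cap M$ one has $\sup\{|f(x)|:f\in D\cap M,\ \nor f\le 1\}\ge\tfrac1r\nor x$. Since $D$ is $r$-norming, the supremum over all of $D$ is already at least $\tfrac1r\nor x$, so for each rational $q<\tfrac1r\nor x$ there exists $f\in D$ with $\nor f\le1$ and $|f(x)|>q$. Absoluteness of the existential formula above together with Lemma~\ref{l:unique-M} (applied with the parameters $x,q,D\in M$) then yields such an $f$ inside $M$, i.e.\ in $D\cap M$; letting $q\uparrow\tfrac1r\nor x$ proves the claim.

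From here both remaining bullets follow from one inequality established by approximation: for $y\in\overline{X\cap M}$ and $z\in(D\cap M)_\perp$ I claim $\nor y\le r\nor{y+z}$. Given $\varepsilon>0$, choose $y'\in X\cap M$ with $\nor{y-y'}<\varepsilon$ and, by the norming claim, $f\in D\cap M$ with $\nor f\le1$ and $|f(y')|>\tfrac1r\nor{y'}-\varepsilon$. Crucially $f(z)=0$, because $f\in D\cap M$ and $z\in(D\cap M)_\perp$, so
$$\nor{y+z}\ge|f(y+z)|=|f(y)|\ge|f(y')|-\nor{y-y'}>\frac1r\nor y-\Bigl(\frac1r+2\Bigr)\varepsilon,$$
and $\varepsilon\to0$ gives $\nor{y+z}\ge\tfrac1r\nor y$. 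Taking $z=-y$ for $y$ in the intersection forces $\overline{X\cap M}\cap(D\cap M)_\perp=\{0\}$ (second bullet); this makes the canonical projection $P$ onto $\overline{X\cap M}$ along $(D\cap M)_\perp$ well defined, and applying the inequality to $w=y+z$ gives $\nor{Pw}=\nor y\le r\nor w$, whence $\nor P\le r$ (third bullet).

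The main obstacle is entirely the set-theoretic bookkeeping behind the norming claim: one must make sure that the existence-of-norming-functional formula, and every object it refers to ($X^*$ with its norm, the duality pairing, the rationals and the order), are absolute for and contained in $M$ --- which is exactly the role of Lemma~\ref{l:predp} and the Convention. The analytic content is then the routine three-$\varepsilon$ estimate above; I would stress that it deliberately sidesteps the usual difficulty in passing from $X\cap M$ to its closure --- namely that the supremum defining the norm is only lower semicontinuous --- by choosing, for each approximant $y'$, a functional that already annihilates $z$.
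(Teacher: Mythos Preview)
Your proof is correct and follows essentially the same approach as the paper: both arguments use Lemma~\ref{l:predp}(viii)/(ix) for the first bullet, then invoke elementarity (via Lemma~\ref{l:unique-M}) on an ``existence of a norming functional'' formula to produce, for each $x\in X\cap M$, a witness $f\in D\cap M$, and from this derive the key inequality $\nor x\le r\nor{x+y}$ for $y\in(D\cap M)_\perp$. The only cosmetic differences are that the paper parametrizes by a rational $q>r$ and the formula $\exists x^*\in D:\nor{x^*}=1\ \&\ |x^*(x)|\ge\tfrac1q\nor x$, while you parametrize by a rational $q<\tfrac1r\nor x$ and $\nor f\le1$; and the paper simply states ``it is enough'' to prove the inequality for $x\in X\cap M$ (leaving the passage to the closure implicit by continuity), whereas you carry out that $\varepsilon$-approximation explicitly.
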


\begin{proof} Let $\phi_1,\dots,\phi_N$ be a subformula-closed list of formulas which contains the formulas from Lemma~\ref{l:predp} and the formulas below marked by $(*)$, let $Y$ be a countable subset containing the set $Y_0$ from Lemma~\ref{l:predp}. Fix an arbitrary $M\prec(\phi_1,\dots,\phi_N;Y)$.

Suppose that $X\in M$ and $D\in M$. By Lemma~\ref{l:predp}(viii,ix) $\overline{X\cap M}$ is a closed linear subspace of $X$. Therefore to prove the lemma it is enough to show that $\|x\|\le r\|x+y\|$ for any $x\in X\cap M$ and $y\in (D\cap M)_\perp$.
So, fix such $x$ and $y$. Further, let $q\in(r,\infty)\cap\Q$ be arbitrary. Since $D$ is $r$-norming,
$$\exists x^*\in D: \|x^*\|=1 \& |x^*(x)|\ge \frac1q\|x\|. \eqno{(*)}$$
Since $\frac1q\in M$ (by Lemma~\ref{l:predp}(i,iv)) we can use Lemma~\ref{l:unique-M} to find such an $x^*$ in $M$. Then
$$\|x\|\le q|x^*(x)|=q|x^*(x+y)|\le q\|x+y\|.$$
This holds for any $q\in(r,\infty)\cap \Q$, hence $\|x\|\le r\|x+y\|$ which completes the proof.
\end{proof}

The projection given by the previous lemma will be denoted by $P_M$. The important case is when $P_M$ is defined on the whole space $X$. This can be used to characterize spaces with a projectional skeleton.

\begin{lem}\label{le:generovani} Let $X$ be a Banach space and $D\subset X^*$ a norming subspace. Then the following two assertions are equivalent
\begin{itemize}
	\item[(i)] $X$ admits a projectional skeleton such that $D$ is contained in the subspace induced by the skeleton.
	\item[(ii)] For any suitable elementary model $M$ 
	$$\overline{X\cap M}+(D\cap M)_\perp=X.$$ 
\end{itemize}
\end{lem}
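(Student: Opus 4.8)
My plan is to prove the equivalence by showing how a projectional skeleton interacts with elementary models. The key conceptual point is that the projection $P_M$ from Lemma~\ref{le:projekce} is a projection onto $\overline{X\cap M}$ along $(D\cap M)_\perp$, and it is defined on \emph{all} of $X$ precisely when $\overline{X\cap M}+(D\cap M)_\perp=X$. So the equivalence amounts to matching the abstract definition of a skeleton-inducing subspace with the explicit model-theoretic projections $P_M$.

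\smallskip

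\emph{Proof of (i)$\Rightarrow$(ii).}
Suppose $(P_\lambda)_{\lambda\in\Lambda}$ is a projectional skeleton on $X$ whose induced subspace $E=\bigcup_\lambda P_\lambda^*[X^*]$ contains $D$. I would first put the skeleton and all its structural data (the family $(P_\lambda)$, the index set $\Lambda$ with its order, together with $X$ and $D$) into the list of objects that a suitable model $M$ must contain. The heart of the argument is to show that for such $M$, the set $\Lambda_M=\Lambda\cap M$ is again up-directed (this is exactly Lemma~\ref{l:predp}(xiii)) and cofinal enough that its projections ``fill up'' $X\cap M$. Concretely, I expect to show two inclusions. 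First, $\overline{X\cap M}\subseteq\overline{\bigcup_{\lambda\in\Lambda_M}P_\lambda[X]}$: any $x\in X\cap M$ lies in some $P_\mu[X]$ by property (v) of a skeleton, and by absoluteness of the defining formula one can find such an index $\mu$ inside $M$, whence $x=P_\mu x\in P_\mu[X]$ with $\mu\in\Lambda_M$. Second, I would analyze the projection $P=\lim_{\lambda\in\Lambda_M}P_\lambda$ along the up-directed net $\Lambda_M$; using the compatibility (iii) and the limit property (iv) this net of projections converges in the strong operator topology to a projection $P$ with range $\overline{\bigcup_{\lambda\in\Lambda_M}P_\lambda[X]}=\overline{X\cap M}$. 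Then I would check that $\ker P=(D\cap M)_\perp$: the inclusion $\ker P\supseteq(E\cap M)_\perp\supseteq(D\cap M)_\perp$ follows since each $P_\lambda^*$ fixes the relevant functionals, and the reverse uses that $E\cap M$ norms $\overline{X\cap M}$. Once $P$ is a bounded projection defined on all of $X$ with range $\overline{X\cap M}$ and kernel $(D\cap M)_\perp$, we get $X=P[X]\oplus\ker P=\overline{X\cap M}+(D\cap M)_\perp$, which is exactly (ii).

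\smallskip

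\emph{Proof of (ii)$\Rightarrow$(i).}
Conversely, assume that for a suitable $M$ we have $\overline{X\cap M}+(D\cap M)_\perp=X$. Then by Lemma~\ref{le:projekce} the projection $P_M$ is everywhere defined, bounded by the norming constant $r$, with $P_M[X]=\overline{X\cap M}$ (which is separable whenever $M$ is countable). The plan is to build the skeleton by letting the index set be a large enough family of countable elementary models $M$, directed by inclusion, and to set the projection attached to $M$ to be $P_M$. Verifying the skeleton axioms (i)--(v) then becomes a matter of the permanence properties of models: boundedness is uniform by Lemma~\ref{le:projekce}; separability of ranges is clear; compatibility $P_MP_{M'}=P_{M'}P_M=P_M$ for $M\subseteq M'$ follows because $\overline{X\cap M}\subseteq\overline{X\cap M'}$ and $(D\cap M')_\perp\subseteq(D\cap M)_\perp$; and the countable-supremum condition (iv) follows from the fact that an increasing union of countable models is again a suitable model with $\overline{X\cap\bigcup_nM_n}=\overline{\bigcup_n\overline{X\cap M_n}}$. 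Finally $D$ sits inside the induced subspace because each functional in $D\cap M$ is fixed by $P_M^*$.

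\smallskip

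\emph{Where the difficulty lies.}
I expect the main obstacle to be the direction (i)$\Rightarrow$(ii), specifically the identification $\ker P=(D\cap M)_\perp$ and the proof that the net $(P_\lambda)_{\lambda\in\Lambda_M}$ converges strongly to a projection with range exactly $\overline{X\cap M}$. The subtlety is that the skeleton's limit axiom (iv) is stated only for increasing \emph{sequences}, whereas $\Lambda_M$ is merely up-directed; bridging this gap requires exploiting that $M$ is countable, so that $\Lambda_M$ is a countable up-directed set and its ``action'' on the separable space $\overline{X\cap M}$ can be captured by cofinal sequences. Keeping the absoluteness bookkeeping correct, so that all the indices, suprema and functionals witnessing the relevant formulas can be found inside $M$, is the delicate technical core of the argument.
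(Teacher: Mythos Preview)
Your approach matches the paper's (which in turn defers to Kubi\'s \cite{kubisSkeleton}). Two points deserve attention.

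First, in (i)$\Rightarrow$(ii) your kernel inclusion chain is written backward: since $D\subseteq E$ one has $(E\cap M)_\perp\subseteq(D\cap M)_\perp$, not the reverse. What you actually need for the decomposition is $\ker P\subseteq(D\cap M)_\perp$, and this is what your ingredients give: if $Px=0$ and $\phi\in D\cap M$, by elementarity find $\mu\in\Lambda_M$ with $P_\mu^*\phi=\phi$; then $\phi(x)=\phi(P_\mu x)=\phi(P_\mu Px)=0$ because $P_\mu P=P_\mu$ once $\mu$ is dominated in $\Lambda_M$ by the cofinal sequence defining $P$.

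Second, for (ii)$\Rightarrow$(i) your phrase ``a large enough family of countable elementary models'' hides a genuine technicality: one must produce an honest \emph{set} of models that is up-directed and closed under suprema of increasing sequences, while keeping each model suitable for the fixed list of formulas. The paper handles this by fixing a single large set $R\supseteq Y\cup X\cup D$ absolute for the formulas, taking a Skolem function $\psi$ relative to $R$, and indexing the skeleton by
\[
\Lambda=\{A\subset X\cup D:\ A\text{ countable and }\psi(A)\cap(X\cup D)=A\},
\]
with $P_A:=P_{\psi(A)}$. This makes directedness, $\sigma$-completeness, and the covering property $X=\bigcup_A P_A[X]$ automatic from the properties of $\psi$. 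Your verification of the compatibility $P_{M}P_{M'}=P_{M'}P_{M}=P_{M}$ for $M\subseteq M'$ and of the sequence axiom is exactly what the paper does.
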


\begin{proof} This result is essentially proved in \cite[Theorem 15]{kubisSkeleton}. Since we are using a different approach to elementary submodels we indicate a proof.

(i)$\Rightarrow$(ii) This is essentially \cite[Lemma 14]{kubisSkeleton}. It is easy to rewrite the proof to our setting.

(ii)$\Rightarrow$(i) Let us fix a list of formulas $\phi_1,\dots,\phi_n$ containing the formulas provided by the assumption of (ii) and 
the formulas provided by Lemma~\ref{le:projekce}. Let $Y$ be a countable set containing the countable set provided by the assumption and that provided by Lemma~\ref{le:projekce}. If $M$ is a corresponding elementary model, then we have the projection
$P_M$ with range $\overline{X\cap M}$ and kernel $(D\cap M)_\perp$. Moreover, if $M_1$ and $M_2$ are two such models satisfying $M_1\subset M_2$, then $P_{M_1}P_{M_2}=P_{M_2}P_{M_1}=P_{M_1}$. Indeed, obviously $\overline{X\cap M_1}\subset\overline{X\cap M_2}$ which implies $P_{M_2}P_{M_1}=P_{M_1}$. Moreover, $\ker P_{M_2}=(D\cap M_2)_\perp\subset(D\cap M_1)_\perp=\ker P_{M_1}$, hence for any $x\in X$ we have
$$P_{M_1}(x)=P_{M_1}P_{M_2}(x)+ P_{M_1}(x-P_{M_2}(x))=P_{M_1}P_{M_2}(x).$$
Further, if $M_1\subset M_2\subset M_3\subset\dots$ is an increasing sequence of corresponding elementary models, then
$M=\bigcup_n M_n$ is again such a model and clearly $P_M[X]=\overline{\bigcup_n P_{M_n}[X]}$. Therefore, the idea is to ``put together'' all the projections $P_M$ to get a projectional skeleton. One possible way is described in \cite{kubisSkeleton} but it does not match our setting. Let us describe an alternative way.

Fix a set $R$ such that the formulas $\phi_1,\dots,\phi_n$ are absolute for $R$ and $Y\cup X\cup D\subset R$. Such $R$ exists due to 
Theorem~\ref{T:countable-model}. (Note that $R$ is not countable.) Now let $\psi$ be a Skolem function for $\phi_1,\dots,\phi_n$, $Y$ and $R$ (see \cite[Lemma 2.4]{cuka-cejm}). In particular, for any countable set $A\subset R$, $\psi(A)\prec(\phi_1,\dots,\phi_n,Y)$ and $A\subset\psi(A)$. Let
$$\Lambda=\{A\subset X\cup D; A\mbox{ countable }\&\ \psi(A)\cap (X\cup D)=A\}.$$
It easily follows from \cite[Lemma 2.4]{cuka-cejm} that $\Lambda$ is up-directed and $(P_{\psi(A)})_{A\in\Lambda}$ is a projectional skeleton. Moreover, $P_{\psi(A)}^*[X^*]=\overline{D\cap\psi(A)}^{w^*}$ and these subspaces cover $D$.
\end{proof}

The previous lemma characterizes the existence of projectional skeletons, but does not test whether the skeleton
may be chosen to be commutative. Such a characterization is given in the following lemma which is an easy consequence of the previous one.

\begin{lem}\label{le:plichko}	
	 Let $X$ be a Banach space and $D\subset X^*$ a norming subspace. Then the following two assertions are equivalent
\begin{itemize}

	\item[(i)] $D$ is contained in a $\Sigma$-subspace of $X$, i.e., $X$ admits a commutative projectional skeleton such that $D$ is contained in the subspace induced by the skeleton.

	\item[(ii)] There is a list of formulas $\phi_1,\dots,\phi_n$ and a countable set $Y$ such that the following holds:

	\begin{itemize}

	\item[$\bullet$] 	$\overline{X\cap M}+(D\cap M)_\perp=X$ for any $M\prec(\phi_1,\dots,\phi_n;Y)$.

	\item[$\bullet$] $P_{M_1}$ and $P_{M_2}$ commute whenever $M_j\prec(\phi_1,\dots,\phi_n;Y)$ for $j=1,2$.

\end{itemize}

\end{itemize}

\end{lem}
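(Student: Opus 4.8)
The plan is to derive this as an easy consequence of the previous Lemma~\ref{le:generovani}, using the known correspondence between commutative projectional skeletons and $\Sigma$-subspaces. The two conditions to equate are the existence of a commutative skeleton inducing a space containing $D$, and the pair of bulleted conditions in (ii): the norming covering condition plus the commutativity of all the projections $P_M$ arising from models over a fixed list of formulas and a fixed countable set.

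For the direction (ii)$\Rightarrow$(i), I would proceed exactly as in the proof of Lemma~\ref{le:generovani}. The first bullet is precisely condition (ii) of that lemma, so the construction there already produces a projectional skeleton $(P_{\psi(A)})_{A\in\Lambda}$ inducing a subspace of $X^*$ containing $D$. The only new point is that this skeleton must be commutative. Since every projection in the constructed family has the form $P_{\psi(A)}$ where $\psi(A)\prec(\phi_1,\dots,\phi_n;Y)$, the second bullet of (ii) guarantees that any two such projections commute; hence the whole skeleton is commutative. Finally, a norming subspace induced by a commutative projectional skeleton is a $\Sigma$-subspace by \cite[Proposition 21 and Theorem 27]{kubisSkeleton}, which gives the first half of (i).

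For the direction (i)$\Rightarrow$(ii), I would invoke Lemma~\ref{le:generovani}: the existence of a projectional skeleton inducing a subspace containing $D$ already yields a list of formulas and a countable set for which the first bullet (the covering equality $\overline{X\cap M}+(D\cap M)_\perp=X$) holds for all suitable models $M$. It remains to arrange that the corresponding projections $P_{M_1},P_{M_2}$ commute for any two such models. The key structural fact is that $D$ is a $\Sigma$-subspace, so there is a linearly dense set $S\subset X$ witnessing this, i.e.\ $D=\{x^*\in X^*:\{s\in S:x^*(s)\ne0\}\text{ is countable}\}$. I would enlarge the formula list (and put $S$ into the countable-set data as a named object) so that every suitable model $M$ ``sees'' $S$; then $P_M$ and its kernel $(D\cap M)_\perp$ can be described in terms of the countable set $S\cap M$. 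Concretely, the range $\overline{X\cap M}$ and the induced dual subspace $P_M^*[X^*]$ are controlled by the coordinates indexed by $S\cap M$, and two models produce commuting projections precisely because the $\Sigma$-subspace structure makes the projections ``coordinate projections'' that depend only on the countable coordinate sets $S\cap M_1$ and $S\cap M_2$.

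The main obstacle is this last commutativity verification: extracting from the abstract $\Sigma$-subspace structure the precise absoluteness statements needed so that, for a suitably chosen formula list, the projections $P_{M}$ become compatible coordinate projections whose commutativity is automatic. In practice this amounts to checking that the relevant data---the set $S$, the coordinate functionals, and the characterization of $D$ via countable supports---can all be encoded by formulas and named constants that the convention on suitable models forces every $M$ to contain, so that Lemma~\ref{l:predp} and Lemma~\ref{l:unique-M} apply uniformly. Once this encoding is in place, the commutativity of $P_{M_1}$ and $P_{M_2}$ reduces to the elementary observation that coordinate projections along disjoint-support decompositions commute, and the lemma follows.
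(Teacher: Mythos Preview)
Your (ii)$\Rightarrow$(i) is exactly the paper's argument: the skeleton produced in the proof of Lemma~\ref{le:generovani} consists entirely of projections $P_{\psi(A)}$ with $\psi(A)\prec(\phi_1,\dots,\phi_n;Y)$, so the second bullet forces commutativity.

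For (i)$\Rightarrow$(ii) you take a somewhat different route from the paper. The paper simply says this direction ``follows by a slight refinement of the proof of the respective implication in Lemma~\ref{le:generovani}'', meaning: one puts the given \emph{commutative skeleton} $(P_\lambda)_{\lambda\in\Lambda}$ into the countable data, rewrites Kubi\'s's Lemma~14 argument, and observes that for a suitable $M$ the projection $P_M$ is the SOT-limit of $(P_\lambda)_{\lambda\in\Lambda\cap M}$; commutativity of any two $P_{M_1},P_{M_2}$ is then inherited from commutativity of the skeleton. You instead unpack the $\Sigma$-subspace characterization and put the linearly dense witness $S$ into the model, aiming to show that $P_M$ acts as the ``coordinate projection'' $s\mapsto s$ for $s\in S\cap M$ and $s\mapsto 0$ for $s\in S\setminus M$. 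This works: once the formula defining the countable support $\{s\in S:x^*(s)\ne0\}$ is absolute for $M$, every $x^*\in D\cap M$ has support contained in $S\cap M$, whence $S\setminus M\subset(D\cap M)_\perp$ and the coordinate description of $P_M$ follows; commutativity is then immediate since both $P_{M_1}$ and $P_{M_2}$ are diagonal on $S$. Two small slips do not affect the argument: hypothesis (i) only gives that $D$ is \emph{contained in} a $\Sigma$-subspace $D'$, not equal to one, but the projections defined via $D$ and via $D'$ coincide (both kernels are complements of the same range and one is contained in the other); and no separate ``coordinate functionals'' are needed---the support argument above suffices. The paper's route is shorter because it stays with the skeleton already given in (i); yours trades that for an explicit pointwise formula for $P_M$ on $S$, which makes the commutativity visible without passing through SOT-limits.
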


\begin{proof} The implication (i)$\Rightarrow$(ii) follows by a slight refinement of the proof of the respective implication in Lemma~\ref{le:generovani}. The converse one follows immediately from the proof of (ii)$\Rightarrow$(i) of Lemma~\ref{le:generovani} since the skeleton is built from projections of the form $P_M$.
\end{proof}

Now we proceed to the proof of Theorem~\ref{T:lepeni}. It will be done using Lemma~\ref{le:generovani}. We will further
need a strengthening of the implication (i)$\Rightarrow$(ii) for WLD spaces. The strengthening consists in change of quantifiers
-- we need a finite list of formulas which works for all Banach spaces simultaneously. It is the content of the following lemma.

\begin{lem}\label{le:wld} 
For any suitable elementary model $M$ the following holds:
Let $X$ be any WLD Banach space satisfying $X\in M$. Then $X=\overline{X\cap M}+(X^*\cap M)_\perp$.
\end{lem}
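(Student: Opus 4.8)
\emph{The plan.} The assertion is exactly the uniform form of the implication (i)$\Rightarrow$(ii) of \lemref{le:generovani} applied with $D=X^*$: a WLD space admits a (commutative) projectional skeleton inducing the whole dual, so \lemref{le:generovani} already yields $\overline{X\cap M}+(X^*\cap M)_\perp=X$ for every model $M$ suitable \emph{for that particular $X$}. The entire content of the present lemma is that one finite list of formulas and one countable set can be chosen to work simultaneously for \emph{all} WLD spaces. The idea is to replace the space-dependent skeleton, which was fed into the model in \lemref{le:generovani}, by the abstract $\Sigma$-subspace witness, which can be recovered inside $M$ by a formula mentioning no particular $X$. Concretely, $X$ being WLD means $X^*$ is a $\Sigma$-subspace of itself, i.e.\ there is a linearly dense set $M_0\subset X$ such that $\mathrm{supp}(x^*):=\{m\in M_0:x^*(m)\ne0\}$ is countable for every $x^*\in X^*$. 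First I would put into the uniform list of formulas one asserting the existence of such a witness $M_0$, together with the formula defining $\mathrm{supp}$ and the relevant subformulas; then for any WLD space $X\in M$, \lemref{l:unique-M} produces a witness $M_0\in M$.

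The key step is a support-containment property. Fix $x^*\in X^*\cap M$. Since $x^*,M_0\in M$, the set $\mathrm{supp}(x^*)$ is definable from parameters in $M$, so $\mathrm{supp}(x^*)\in M$ by \lemref{l:unique-M}; being countable it satisfies $\mathrm{supp}(x^*)\subset M$ by \lemref{l:predp}(v). Consequently $x^*$ vanishes on $M_0\setminus M$, hence on $\overline{\spn}(M_0\setminus M)$. Writing $s:=M_0\cap M$ (a countable subset of $M_0$), this says $X^*\cap M\subset\{y^*\in X^*:\mathrm{supp}(y^*)\subset s\}$, while on the other side $\overline{\spn}(s)\subset\overline{X\cap M}$.

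Finally, the standard construction of the commutative projectional skeleton of a WLD space from a $\Sigma$-witness (see \cite{kubisSkeleton}) associates to each suitably closed (``admissible'') countable set $s\subset M_0$ a norm-one projection $P_s$ with range $\overline{\spn}(s)$ and $\ker P_s=\{x\in X:y^*(x)=0\text{ whenever }\mathrm{supp}(y^*)\subset s\}$; here $s=M_0\cap M$ is admissible because an elementary model is closed under the definable operations entering that notion. Then $P:=P_{M_0\cap M}$ has range contained in $\overline{X\cap M}$, while $\ker P\subset(X^*\cap M)_\perp$ by the previous paragraph. Hence every $x\in X$ decomposes as $x=Px+(x-Px)\in\overline{X\cap M}+(X^*\cap M)_\perp$, which is the desired equality.

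\emph{The main obstacle} is precisely this change of quantifiers: manufacturing one formula list valid for all WLD spaces at once. It is resolved by encoding WLD through the existence of the witness $M_0$ and importing $M_0$ into $M$ via \lemref{l:unique-M}, rather than importing a space-specific skeleton as in \lemref{le:generovani}. The remaining technical points are to verify that $M_0\cap M$ is admissible (resting on the closure of $M$ under definable operations) and that the associated projection has the stated range and kernel, the norm bound $1$ coming from $X^*$ being $1$-norming.
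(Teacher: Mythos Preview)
Your core idea coincides with the paper's: put a $\Sigma$-witness $M_0\in M$ by elementarity, and use that for every $x^*\in X^*\cap M$ the countable set $\mathrm{supp}(x^*)$ belongs to $M$ and hence is contained in $M$. Up to this point the two proofs are identical.

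Where you diverge is in the final step. You invoke an external projection $P_s$ for $s=M_0\cap M$, asserting that $s$ is ``admissible'' because $M$ is closed under the relevant definable operations. This is the weak spot: you never specify which construction you mean or what those operations are, and in most presentations the admissible sets are precisely those arising from suitable elementary submodels, so the justification is close to circular. More to the point, the detour is unnecessary. From your own observations you already have
\[
M_0\cap M\subset X\cap M
\qquad\text{and}\qquad
M_0\setminus M\subset (X^*\cap M)_\perp
\]
(the second because every $x^*\in X^*\cap M$ vanishes on $M_0\setminus M$). Hence $M_0\subset \overline{X\cap M}+(X^*\cap M)_\perp$; this sum is closed by \lemref{le:projekce}, and $M_0$ is linearly dense, so the sum equals $X$. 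No auxiliary projection is needed.

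The paper reaches the same conclusion by a short contradiction: if some nonzero $z^*$ annihilates both $X\cap M$ and $(X^*\cap M)_\perp$, pick $x\in M_0$ with $z^*(x)\ne0$; since $z^*\in\overline{X^*\cap M}^{w^*}$ there is $y^*\in X^*\cap M$ with $y^*(x)\ne0$, whence $x\in\mathrm{supp}(y^*)\subset M$, contradicting $z^*|_{X\cap M}=0$. Your direct argument (once the $P_s$ step is replaced as above) is in fact slightly slicker than this, but as written the appeal to admissibility is a genuine gap.
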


\begin{proof} We essentially follow the proof \cite[Proposition 6]{kubisSkeleton} with necessary modifications.  Let $\phi_1,\dots,\phi_N$ be a subformula-closed list of formulas which contains the formulas from Lemma~\ref{l:predp}, the formulas provided by Lemma~\ref{le:projekce} and the formulas below marked by $(*)$. Let $Y$ be a countable subset containing the set $Y_0$ from Lemma~\ref{l:predp} and the set provided by Lemma~\ref{le:projekce}. Fix an arbitrary $M\prec(\phi_1,\dots,\phi_N;Y)$.

By Lemma~\ref{l:predp}(x) we have $X^*\in M$ as well. It follows from Lemma~\ref{le:projekce} that $\overline{X\cap M}+(X^*\cap M)_\perp$ is a closed subspace of $X$. Hence, if $X \ne \overline{X\cap M}+(X^*\cap M)_\perp$, 
we may find a nonzero functional $z^*\in X^*$ which is zero both on $X\cap M$ and on $(X^*\cap M)_\perp$.
Since $X$ is WLD,
$$\exists \Gamma\subset X: \overline{\sp \Gamma}=X\ \&\ \forall x^*\in X^*: \{x\in \Gamma: x^*(x)\ne 0\}\mbox{ is countable.}\eqno{(*)}$$
By elementarity we may choose such a $\Gamma$ in $M$. Since $z^*\ne0$, we can find $x\in\Gamma$ with $z^*(x)\ne0$.
Since $z^*\in((X^*\cap M)_\perp)^\perp=\overline{X^*\cap M}^{w^*}$ (by the Bipolar Theorem), there is $y^*\in X^*\cap M$ with $y^*(x)\ne 0$. On the other hand, by the absoluteness of the formula
$$\exists C: (C\subset \Gamma\ \&\ \forall y\in\Gamma:(y\in C\Leftrightarrow y^*(y)\ne 0)) \eqno{(*)}$$
we get that
$$\{y\in \Gamma : y^*(y)\ne 0\}\in M.$$
Since the set on the left-hand side is countable, by Lemma~\ref{l:predp}(v) we get that $\{y\in \Gamma : y^*(y)\ne 0\}\subset M$, in particular $x\in M$. But then $z^*(x)=0$, a contradiction completing the proof.
\end{proof}

The following lemma together with Lemma~\ref{le:generovani} yield the proof of Theorem~\ref{T:lepeni}.

\begin{lem}\label{le:lepeni} For any suitable elementary model $M$ the following holds:
Let $X$ be a Banach space and $(R_\lambda)_{\lambda\in \Lambda}$ a family of projections with the properties listed in Theorem~\ref{T:lepeni}. Denote $D= \bigcup_{\lambda\in\Lambda}R_\lambda^*[X^*]$. If $X$, $D$ and $(R_\lambda)_{\lambda\in \Lambda}$ belong to $M$, then $X=\overline{X\cap M}+(D\cap M)_\perp$.
\end{lem}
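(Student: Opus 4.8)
The plan is to verify the defining equality by a density/contradiction argument analogous to the proof of \lemref{le:wld}, but exploiting the hypotheses (i)--(v) on the family $(R_\lambda)_{\lambda\in\Lambda}$ to reduce to the WLD case. First I would fix a suitable model $M$ (containing the formulas from \lemref{l:predp}, \lemref{le:projekce} and \lemref{le:wld}, together with a few auxiliary formulas marked by $(*)$) and assume $X,D,(R_\lambda)_{\lambda\in\Lambda}\in M$. By \lemref{le:projekce} the set $\overline{X\cap M}+(D\cap M)_\perp$ is a closed subspace of $X$, since $D$ is norming (each $R_\lambda^*[X^*]$ is norming on $R_\lambda[X]$, and these add up to a norming subspace of $X^*$ thanks to the uniform bound in (i)). Hence it suffices to show it is dense, and if it were not, there would be a nonzero $z^*\in X^*$ vanishing both on $X\cap M$ and on $(D\cap M)_\perp$; the latter, by the Bipolar Theorem, means $z^*\in\overline{D\cap M}^{w^*}$.

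The key step is to locate, inside $M$, a witnessing $\lambda$ and then invoke the WLD structure of $R_\lambda[X]$. Using elementarity together with (v), I would argue that $X\cap M\subset\bigcup\{R_\lambda[X]:\lambda\in\Lambda\cap M\}$: every element of $X\cap M$ lies in some $R_\mu[X]$, and by absoluteness of the formula ``$\exists\lambda\in\Lambda: x\in R_\lambda[X]$'' one may take $\lambda\in M$. The directedness of $\Lambda\cap M$ (\lemref{l:predp}(xiii)), combined with the compatibility condition (iii) and the continuity condition (iv), should let me treat the family $\{R_\lambda:\lambda\in\Lambda\cap M\}$ coherently. For each such $\lambda$, the space $Y_\lambda:=R_\lambda[X]$ is WLD and belongs to $M$ (it is created uniquely from $R_\lambda\in M$ via \lemref{l:unique-M}), so \lemref{le:wld} applies to give
\[
R_\lambda[X]=\overline{R_\lambda[X]\cap M}+(R_\lambda[X]^*\cap M)_\perp^{Y_\lambda},
\]
where the annihilator is taken inside $Y_\lambda$.

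The heart of the argument is transferring this decomposition of each $R_\lambda[X]$ back into $X$ and reaching a contradiction with $z^*$. I would show that $z^*$ restricted to some $R_\lambda[X]$ (with $\lambda\in\Lambda\cap M$ chosen so that $z^*$ does not vanish on $R_\lambda[X]$, which is possible because $z^*\ne0$ and these ranges cover $X$) is nonzero, yet vanishes on $R_\lambda[X]\cap M$ and annihilates $(R_\lambda[X]^*\cap M)_\perp$ computed within $Y_\lambda$; the latter uses that $R_\lambda^*[X^*]\subset D$ together with the identification $R_\lambda[X]^*\cong R_\lambda^*[X^*]$ and the fact that $z^*\in\overline{D\cap M}^{w^*}$. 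This contradicts the equality \lemref{le:wld} furnishes for $Y_\lambda$, since it forces $z^*$ to vanish on all of $R_\lambda[X]$.

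The main obstacle I anticipate is the bookkeeping with the dual identifications: one must check carefully that the restriction map $X^*\to R_\lambda[X]^*$ carries $D$ onto (a norming subspace containing) $R_\lambda^*[X^*]$, that $M$ is closed under these operations (so that $R_\lambda[X]\cap M$ and $R_\lambda[X]^*\cap M$ behave correctly), and that the annihilator taken inside $Y_\lambda$ matches the trace on $Y_\lambda$ of the annihilator $(D\cap M)_\perp$ taken inside $X$. Keeping the uniform norm bound (i) in play is what guarantees $D$ is genuinely norming in $X$, and conditions (iii)--(iv) are what make the selection of a single $\lambda\in M$ doing the job legitimate; verifying these compatibilities inside the elementary model is the delicate part, whereas the WLD input itself is black-boxed through \lemref{le:wld}.
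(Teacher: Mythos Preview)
Your overall strategy matches the paper's, but there is a genuine gap at the ``key step.'' You want to pick $\lambda\in\Lambda\cap M$ with $z^*|_{R_\lambda[X]}\ne 0$, justifying this by ``$z^*\ne 0$ and these ranges cover $X$.'' The ranges $R_\lambda[X]$ for $\lambda\in\Lambda$ do cover $X$ by~(v), but the ranges for $\lambda\in\Lambda\cap M$ do \emph{not}; they only fill up $R_{\lambda_0}[X]$, where $\lambda_0=\sup(\Lambda\cap M)$. And you cannot use elementarity to pull a witnessing $\lambda$ into $M$, because the relevant formula has $z^*$ as a parameter and $z^*\notin M$. So it is entirely possible that $z^*$ vanishes on every $R_\lambda[X]$ with $\lambda\in M$ yet is nonzero on $X$.

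The paper closes this gap by arguing in two pieces. First, for \emph{every} $\lambda\in\Lambda\cap M$ (not just one) it shows $R_\lambda^*(z^*)=0$, using exactly the WLD decomposition of $R_\lambda[X]$ you describe together with the identifications $R_\lambda[X\cap M]=R_\lambda[X]\cap M$ and $R_\lambda^*[D\cap M]=R_\lambda^*[X^*]\cap M$; passing to the SOT-limit gives $R_{\lambda_0}^*(z^*)=0$, i.e.\ $z^*$ vanishes on $R_{\lambda_0}[X]$. Second, it shows $\ker R_{\lambda_0}\subset (D\cap M)_\perp$: for any $x^*\in D\cap M$ one uses elementarity on the formula $\exists\lambda\in\Lambda:\ R_\lambda^*(x^*)=x^*$ to get such a $\lambda\in\Lambda\cap M$, whence $x^*=R_{\lambda_0}^*(x^*)$ kills $\ker R_{\lambda_0}$. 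Since $z^*$ vanishes on $(D\cap M)_\perp$, it therefore vanishes on $\ker R_{\lambda_0}$ as well, and combining the two pieces forces $z^*=0$. Your sketch has all the right ingredients for the first piece but is missing the second entirely; the remark that ``conditions (iii)--(iv) make the selection of a single $\lambda\in M$ legitimate'' does not substitute for it.
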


\begin{proof}  Let $\phi_1,\dots,\phi_N$ be a subformula-closed list of formulas which contains the formulas from Lemma~\ref{l:predp}, the formulas provided by Lemmata~\ref{le:projekce} and~\ref{le:wld} and the formulas below marked by $(*)$. Let $Y$ be a countable subset containing the set $Y_0$ from Lemma~\ref{l:predp} and the sets provided by Lemmata~\ref{le:projekce} and~\ref{le:wld}. Fix an arbitrary $M\prec(\phi_1,\dots,\phi_N;Y)$ such that $\{X,D,(R_\lambda)_{\lambda\in \Lambda}\}\subset M$.

Note that $X^*\in M$ due to Lemma~\ref{l:predp}(x). Since $D$ is norming (this follows easily from the properties (i) and (v) in Theorem~\ref{T:lepeni}), Lemma~\ref{le:projekce} shows that $\overline{X\cap M}+(D\cap M)_\perp$ is a closed subspace of $X$.
Hence, if $X \ne \overline{X\cap M}+(D\cap M)_\perp$, we may find a nonzero functional $z^*\in X^*$ which is zero both on $X\cap M$ and on $(D\cap M)_\perp$. Set $\Lambda_M=\Lambda\cap M$.
Since $\Lambda\in M$ by Lemma~\ref{l:predp}(ii), we infer by Lemma~\ref{l:predp}(xiii) that $\Lambda_M$ is up-directed.
Since it is countable, it follows from the properties (iii) and (iv) in Theorem~\ref{T:lepeni} that 
$\Lambda_M$ has a supremum $\lambda_0\in\Lambda$ and that $R_{\lambda_0}=SOT-\lim_{\lambda\in\Lambda_0} R_\lambda$.

Fix any $\lambda\in\Lambda_M$. Then 
$$R_\lambda[X\cap M]=R_\lambda[X]\cap M\mbox{ and }R_\lambda^*[D\cap M]=R_\lambda^*[X^*]\cap M.$$
Indeed, the inclusions $\supset$ follow from the assumption that $R_\lambda$ is a projection and the converse inclusions follow from Lemma~\ref{l:predp}. (The assertion (ii) implies that $R_\lambda\in M$, by (xi) we get $R_\lambda^*\in M$ as well, hence we can conclude by using (ii) once more.)

Since $R_\lambda[X]$ is WLD and $R_\lambda[X]\in M$ by Lemma~\ref{l:predp}(ii),  Lemma~\ref{le:wld} yields 
$$R_\lambda [X]=\overline{R_\lambda[X]\cap M}+(R_\lambda^*[X^*]\cap M)_\perp\cap R_\lambda[X].$$ Obviously $z^*$ (and so also $R_\lambda^*(z^*)$) is zero on $R_\lambda[X]\cap M$. Further, since $z^*\in ((D\cap M)_\perp)^\perp=\overline{D\cap M}^{w^*}$, we get 
$$R_\lambda^*(z^*)\in \overline{R_\lambda^*[D\cap M]}^{w^*}=\overline{R_\lambda^*[X^*]\cap M}^{w^*},$$
hence $R_\lambda^*(z^*)$ is zero on $(R_\lambda^*(X^*)\cap M)_\perp$. Thus $R_\lambda^* (z^*)=0$.
Since this holds for any $\lambda\in\Lambda_M$, we conclude $R_{\lambda_0}^* (z^*)=0$, i.e. the restriction of $z^*$ to $R_{\lambda_0}[X]$ is the zero functional.

 To complete the proof by contradiction it is enough to show that $z^*$ is zero on the kernel of $R_{\lambda_0}$ as well. 
To do that it is sufficient to prove that the kernel of $R_{\lambda_0}$ is contained in $(D\cap M)_\perp$. Hence fix $x$ in the kernel of $R_{\lambda_0}$ and $x^*\in D\cap M$. By the definition of $D$ we have
$$\exists \lambda\in\Lambda : R_\lambda^*(x^*)=x^*.\eqno{(*)}$$
By elementarity we may find such a $\lambda\in\Lambda_M$. In particular, then $\lambda\le\lambda_0$. Therefore
$$x^*(x)=R_\lambda^*(x^*)(x)=R_{\lambda_0}^*(x^*)(x)=x^*(R_{\lambda_0}(x))=x^*(0)=0.$$
This completes the proof.
\end{proof}

\begin{proof}[Proof of Theorem~\ref{T:lepeni}] Let $X$ and $(R_\lambda)_{\lambda\in\Lambda}$ be as in Theorem~\ref{T:lepeni}.
We set $D= \bigcup_{\lambda\in\Lambda}R_\lambda^*[X^*]$. By Lemma~\ref{le:lepeni} and Lemma~\ref{le:generovani} there is a projectional skeleton on $X$ such that the induced subspace of $X^*$ contains $D$. Further, it follows easily from the property (iv) that $D$ is weak$^*$-countably closed. Finally, \cite[Corollary 20]{kubisSkeleton} shows that $D$ is in fact equal to the the subspace induced by the skeleton.
\end{proof}

Now we proceed to the proof of Theorem~\ref{T:main}. To ensure commutativity of the skeleton we need some more lemmata.

\begin{lem}\label{le:kom1} For any suitable elementary model $M$ the following holds:

Let $X$ be a Banach space and $D\subset X^*$ a subspace induced by a projectional skeleton in $X$. Suppose that $X\in M$ and $D\in M$. Denote by $P_M$ the projection induced by $M$ (i.e., the projection onto $\overline{X\cap M}$ along $(D\cap M)_\perp$). Let $Q:X\to X$ be a bounded linear projection such that $Q\in M$. Then $Q$ commute with $P_M$. If $Q[X]$ is moreover separable, then $P_MQ=QP_M=Q$.

\end{lem}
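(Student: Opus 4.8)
The plan is to prove that $Q$ commutes with $P_M$ by showing that $Q$ leaves invariant both the range and the kernel of $P_M$, and then to deduce the separable case as a corollary. Throughout I use, as is legitimate for a suitable model since $D$ is induced by a skeleton, that $\overline{X\cap M}+(D\cap M)_\perp=X$ (the implication (i)$\Rightarrow$(ii) of Lemma~\ref{le:generovani}); thus $P_M$ is an everywhere defined bounded projection with range $\overline{X\cap M}$ and kernel $(D\cap M)_\perp$, and $X=\overline{X\cap M}\oplus(D\cap M)_\perp$. The elementary reduction is then the standard fact that an operator commutes with a bounded projection as soon as it leaves invariant each of the two summands of the associated direct sum decomposition: if $x=r+k$ with $r\in\overline{X\cap M}$ and $k\in(D\cap M)_\perp$, then $QP_Mx=Qr$, while $Qx=Qr+Qk$ with $Qr\in\overline{X\cap M}$ and $Qk\in(D\cap M)_\perp$, so $P_MQx=Qr$ as well. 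Hence it suffices to verify $Q[\overline{X\cap M}]\subseteq\overline{X\cap M}$ and $Q[(D\cap M)_\perp]\subseteq(D\cap M)_\perp$.

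The range invariance is the easy half and uses only $Q\in M$. Since $Q$ is a mapping lying in $M$, Lemma~\ref{l:predp}(ii) gives $Q[M]\subseteq M$; as $Q[X]\subseteq X$, this yields $Q[X\cap M]\subseteq X\cap M$, and by continuity of $Q$ we get $Q[\overline{X\cap M}]\subseteq\overline{X\cap M}$. I also record for later that $Q^*\in M$ by Lemma~\ref{l:predp}(xi), so that $Q^*(x^*)\in M$ whenever $x^*\in M$.

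The kernel invariance is the heart of the matter, and I expect it to be the main obstacle: it is exactly the place where one needs more than the bare membership $Q\in M$, the decisive extra ingredient being that $Q^*$ maps $D$ into itself. Granting $Q^*[D]\subseteq D$, the argument is short. Fix $y\in(D\cap M)_\perp$ and $x^*\in D\cap M$; I must show $x^*(Qy)=0$. Rewriting $x^*(Qy)=(Q^*x^*)(y)$, I observe that $Q^*x^*\in D$ (because $x^*\in D$ and $Q^*[D]\subseteq D$) and $Q^*x^*\in M$ (because $Q^*,x^*\in M$), so $Q^*x^*\in D\cap M$ and therefore annihilates $y\in(D\cap M)_\perp$. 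Thus $Qy\in(D\cap M)_\perp$, which together with the range invariance and the reduction above proves $P_MQ=QP_M$. This invariance of $D$ under $Q^*$ is precisely what is at our disposal in the intended application: there $Q=R_C$, so $Q^*=U_{p_C}$ with $p_C$ a $\sigma$-finite projection, and $U_{p_C}\big[\bigcup_p U_p[\m]\big]\subseteq U_{p_C}[\m]\subseteq\bigcup_p U_p[\m]$, i.e. $Q^*$ preserves $D=\bigcup_p U_p[\m]$ (Proposition~\ref{P:skel1}).

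Finally, the separable addendum follows once commutativity is known. If $Q[X]$ is separable then $Q[X]\in M$ by Lemma~\ref{l:predp}(ii), so Lemma~\ref{l:predp}(xii) furnishes a countable dense set $C\subseteq Q[X]$ with $C\in M$, whence $C\subseteq M$ by Lemma~\ref{l:predp}(v); therefore $Q[X]=\overline{C}\subseteq\overline{X\cap M}$. Consequently $P_MQx=Qx$ for every $x$, that is $P_MQ=Q$, and then $QP_M=P_MQ=Q$ by the commutativity already established.
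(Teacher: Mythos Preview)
Your argument parallels the paper's: both split commutation into invariance of the range of $P_M$ and invariance of its kernel (the paper phrases these as $P_MQP_M=QP_M$ and, via the adjoint, $P_M^*Q^*P_M^*=Q^*P_M^*$), and the separable addendum is handled identically through Lemma~\ref{l:predp}(ii),(xii),(v). The range half and the separable case are correct as you wrote them.

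The gap is the kernel-invariance step. You write ``Granting $Q^*[D]\subseteq D$'', but this is not among the hypotheses of the lemma: $Q$ is an \emph{arbitrary} bounded linear projection in $M$, and nothing in the statement ties $Q^*$ to $D$. So what you have actually proved is the lemma under the extra assumption $Q^*[D]\subset D$ (which, as you correctly note, does hold in the intended application $Q=R_C$). The paper reaches the same point by asserting ``$Q^*[D\cap M]\subset D\cap M$ due to Lemma~\ref{l:predp}(ii)''; but Lemma~\ref{l:predp}(ii) only yields $Q^*[M]\subset M$, hence $Q^*[D\cap M]\subset X^*\cap M$, and passing from $X^*\cap M$ to $D\cap M$ is precisely the step you flagged. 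Thus your caveat is well placed and the issue is shared with the paper's argument; in both cases the proof covers all the applications in the paper but not the lemma in the full generality in which it is stated.
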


\begin{proof}  Let $\phi_1,\dots,\phi_N$ be a subformula-closed list of formulas which contains the formulas from Lemma~\ref{l:predp}, the formulas provided by Lemmata~\ref{le:projekce} and~\ref{le:wld}. Let $Y$ be a countable subset containing the set $Y_0$ from Lemma~\ref{l:predp} and the sets provided by Lemmata~\ref{le:projekce} and~\ref{le:wld}. Fix an arbitrary $M\prec(\phi_1,\dots,\phi_N;Y)$.

Since $Q\in M$, by Lemma~\ref{l:predp}(ii) we have $Q[X\cap M]\subset X\cap M$, hence $P_MQP_M=Q P_M$.
Further, by Lemma~\ref{l:predp}(xi) we have $Q^*\in M$, hence $Q^*[D\cap M]\subset D\cap M$ due to Lemma~\ref{l:predp}(ii). Since $\overline{D\cap M}^{w^*}$ is the range of $P_M^*$,
we get $P_M^*Q^*P_M^*=Q^*P_M^*$, hence $P_MQP_M=P_MQ$. It follows that $Q P_M=P_MQ$.

Suppose that $Q[X]$ is moreover separable. Since $Q[X]\in M$ by Lemma~\ref{l:predp}(ii), there is a countable dense set $C\subset Q[X]$ such that $C\in M$ (by Lemma~\ref{l:predp}(xii)), hence $C\subset M$ (by Lemma~\ref{l:predp}(v)). It follows that $Q[X]\cap M$ is dense in $Q[X]$, hence $Q[X]\subset \overline{X\cap M}=P_M[X]$. Therefore $P_MQ=Q$ which completes the proof.
\end{proof}

\begin{lem}\label{le:kom2} For a suitable elementary model $M$ the following holds: Let $\m$ be a \JBWs{} and $(p_\alpha)_{\alpha\in\Gamma}$ an orthogonal system of $\sigma$-finite projections in $\m$ with sum equal to $1$. Let $\Lambda_0$ and $p_C$, $R_C$, $C\in\Lambda_0$ be defined as in Proposition~\ref{P:skel2}. Set $D=\bigcup_{C\in\Lambda_0} R_C^*[\m]$. For any $C\in\Lambda_0$ let  $(S_{C,j})_{j\in J_C}$  be a commutative projectional skeleton in $R_C[\m_*]$. Suppose that $M$ contains $\m$, $\m_*$, $D$, $(p_\alpha)_{\alpha\in\Gamma}$, $(R_C)_{C\in\Lambda_0}$ and $((S_{C,j})_{j\in J_C})_{C\in\Lambda_0}$.
Denote by $P_M$ the projection induced by $M$. Then the following
assertions are fulfilled:
\begin{itemize}

	\item[(a)] $P_M$ commutes with $R_C$ for each $C\subset \Gamma\cap M$.

	\item[(b)] For any $C\in \Lambda_0\cap M$ there is $j_C\in J_C$ such that $P_M$ restricted to $R_C[\m_*]$ equals $S_{C,j_C}$.

	\item[(c)] Let $C=\Gamma\cap M$. Then $P_M R_C=R_C P_M=P_M$.

	\end{itemize}
\end{lem}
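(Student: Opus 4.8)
The plan is to treat the three assertions separately but to base all of them on Lemma~\ref{le:kom1} together with the strong-operator continuity of the family $(R_C)$ coming from Lemma~\ref{L:directed}. Throughout I write $Y_C=R_C[\m_*]$ and I use freely that $C\in M$ implies $R_C\in M$ and $Y_C\in M$ (Lemma~\ref{l:predp}(ii)), while $R_C$ being a projection makes $Y_C$ invariant for $P_M$ as soon as $P_M$ commutes with $R_C$.

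For (a) I would first dispose of finite sets: if $C\subset\Gamma\cap M$ is finite, then $C\in M$ by Lemma~\ref{l:predp}(iii), hence $R_C\in M$ and Lemma~\ref{le:kom1} gives $P_MR_C=R_CP_M$. For a general countable $C=\{\alpha_1,\alpha_2,\dots\}\subset\Gamma\cap M$ I put $C_n=\{\alpha_1,\dots,\alpha_n\}$; each $C_n$ is a finite subset of $M$, so the previous case applies and $P_M$ commutes with every $R_{C_n}$. Since the $p_\alpha$ are orthogonal, $p_{C_n}\uparrow p_C$, so Lemma~\ref{L:directed} yields $R_{C_n}\to R_C$ in the strong operator topology; passing the identity $P_MR_{C_n}=R_{C_n}P_M$ to the limit (using continuity of $P_M$) gives $P_MR_C=R_CP_M$. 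This is the whole of (a).

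Assertion (b) is the heart of the matter and I expect it to be the main obstacle. Fix $C\in\Lambda_0\cap M$; then $R_C\in M$, so by (a) $Y_C$ is $P_M$-invariant and $E:=P_M|_{Y_C}$ is a projection of $Y_C$. The idea is to lift the skeleton projections to $\m_*$: for $j\in J_C\cap M$ set $\tilde S_{C,j}=S_{C,j}\circ R_C$, a bounded linear projection on $\m_*$ lying in $M$ (as $R_C,S_{C,j}\in M$) whose range $S_{C,j}[Y_C]$ is separable. Lemma~\ref{le:kom1} then gives $P_M\tilde S_{C,j}=\tilde S_{C,j}P_M=\tilde S_{C,j}$, and restricting to $Y_C$ (where $R_C$ is the identity and which is $P_M$-invariant) this reads $P_MS_{C,j}=S_{C,j}P_M=S_{C,j}$ on $Y_C$. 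Letting $j_C=\sup(J_C\cap M)$ -- which exists in $J_C$ because $J_C\cap M$ is countable and up-directed (Lemma~\ref{l:predp}(ii),(xiii)) and for which $S_{C,j_C}=SOT\text{-}\lim_{j\in J_C\cap M}S_{C,j}$ by the skeleton axiom -- and passing to the limit yields $S_{C,j_C}P_M=S_{C,j_C}$ on $Y_C$, i.e. $S_{C,j_C}E=S_{C,j_C}$. It then remains to compare ranges. A standard elementary-model computation shows $\operatorname{range}E=\overline{Y_C\cap M}$ (for $\subset$ one writes $y=R_Cy=\lim R_Cx_n$ with $x_n\in\m_*\cap M$ and uses $R_C[M]\subset M$; for $\supset$ one uses $Y_C\cap M\subset\m_*\cap M$), and that $\overline{Y_C\cap M}=S_{C,j_C}[Y_C]$ (for $\subset$ find, by elementarity, $j'\in J_C\cap M$ with $S_{C,j'}y=y$ for $y\in Y_C\cap M$, whence $S_{C,j_C}y=y$ by axiom~(iii); for $\supset$ use that each separable $S_{C,j}[Y_C]$, $j\in J_C\cap M$, meets $M$ densely by Lemma~\ref{l:predp}(xii),(v)). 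Hence $E$ and $S_{C,j_C}$ have the same range, so $S_{C,j_C}E=E$; combined with $S_{C,j_C}E=S_{C,j_C}$ this forces $E=S_{C,j_C}$, which is exactly (b).

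For (c) let $C=\Gamma\cap M$; it is a nonempty countable subset of $\Gamma$, hence in $\Lambda_0$ (though not in $M$). By (a), $P_M$ commutes with $R_C$, giving $P_MR_C=R_CP_M$. To obtain $R_CP_M=P_M$ it suffices to show that $R_C$ acts as the identity on $\operatorname{range}P_M=\overline{\m_*\cap M}$, and by continuity it is enough to check this on $\m_*\cap M$. Given $x\in\m_*\cap M$, the covering property of Proposition~\ref{P:skel2} provides $C'\in\Lambda_0$ with $R_{C'}x=x$; by elementarity (Lemma~\ref{l:unique-M}) I may take $C'\in\Lambda_0\cap M$, so $C'$ is countable and contained in $M$, hence $C'\subset\Gamma\cap M=C$. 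Then $p_{C'}\le p_C$, so $R_CR_{C'}=R_{C'}$ by Proposition~\ref{P:skel1}(iii), and therefore $R_Cx=R_CR_{C'}x=R_{C'}x=x$. Thus $R_CP_M=P_M$, and with $P_MR_C=R_CP_M$ we conclude $P_MR_C=R_CP_M=P_M$, completing (c). The only genuinely delicate point is the pair of range identifications in (b); everything else is bookkeeping with the properties of $M$ recorded in Lemma~\ref{l:predp}.
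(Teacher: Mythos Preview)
Your proof is correct and follows essentially the same route as the paper's: approximate by finite sets and pass to SOT-limits for (a), use Lemma~\ref{le:kom1} on the skeleton projections together with an elementarity argument to identify ranges for (b), and reduce (c) to showing $\m_*\cap M\subset R_C[\m_*]$ via the covering property and elementarity. Your explicit lifting $\tilde S_{C,j}=S_{C,j}\circ R_C$ to a projection on all of $\m_*$ before invoking Lemma~\ref{le:kom1} is in fact more careful than the paper, which applies that lemma directly to $S_{C,j}$; otherwise the arguments coincide.
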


\begin{proof}
 Let $\phi_1,\dots,\phi_N$ be a subformula-closed list of formulas which contains the formulas from Lemma~\ref{l:predp}, the formulas provided by Lemmata~\ref{le:projekce} and~\ref{le:wld} and the formulas below marked by $(*)$. Let $Y$ be a countable subset containing the set $Y_0$ from Lemma~\ref{l:predp} and the sets provided by Lemmata~\ref{le:projekce} and~\ref{le:kom1}. Fix an arbitrary $M\prec(\phi_1,\dots,\phi_N;Y)$ containing $\m$, $\m_*$, $D$, $(p_\alpha)_{\alpha\in\Gamma}$, $(R_C)_{C\in\Lambda_0}$ and $((S_{C,j})_{j\in J_C})_{C\in\Lambda_0}$.

Fix any $C\subset \Gamma\cap M$. For any finite subset $F\subset C$ we get $F\in M$ by Lemma~\ref{l:predp}(iii). Then $R_F\in M$ by Lemma~\ref{l:predp}(ii). Therefore by Lemma~\ref{le:kom1} we deduce that $R_F$ commutes with $P_M$. Since $R_C$ is the SOT-limit of these projections $R_F$, we conclude that $R_C$ commutes with $P_M$ as well. This completes the proof of the assertion (a).

Let us continue by proving (b). Fix $C\in\Lambda_0\cap M$. Then $C$ is a countable subset of $\Gamma$, thus $C\subset \Gamma\cap M$ by Lemma~\ref{l:predp}(v). By (a) it follows that $P_M$ commutes with $R_C$. In particular, $P_M$ restricted to $R_C[\m_*]$
is a projection on $R_C[\m_*]$. Further, since $C\in M$, we get $(S_{C,j})_{j\in J_C}\in M$, hence also $J_C\in M$ (we apply Lemma~\ref{l:predp}(ii) twice).

It follows by Lemma~\ref{l:predp}(xiii) that $J_C\cap M$ is a countable up-directed set, denote by $j_C$ its supremum. 
For any $j\in J_C\cap M$ we have
$P_M S_{C,j}=S_{C,j} P_M=S_{C,j}$ by Lemma~\ref{le:kom1}. Hence, by proceeding to the SOT-limit we get
$$P_M S_{C,j_C}=S_{C,j_C} P_M=S_{C,j_C}.$$
To complete the proof of (b) it suffices to observe that the range of $P_M R_C$ is contained in the range of $S_{C,j_C}$.
But
$$P_M[R_C[\m_*]]=R_C[P_M[\m_*]]=R_C[\overline{\m_*\cap M}]\subset \overline{R_C[\m_*\cap M]}$$
and for any $\omega\in\m_*\cap M$ we have $R_C(\omega)\in R_C[\m_*]\cap M$. Since
$$\exists j\in J_C: S_{C,j}\omega=\omega,\eqno{(*)}$$
elementarity yields such a $j\in J_C\cap M$. Therefore $S_{C,j_C}\omega=\omega$.

Finally, let us prove (c). The first equality follows from (a). To complete the proof it is enough to show that
the range of $P_M$ is contained in the range of $R_C$. Since the range of $P_M$ is $\overline{\m_*\cap M}$, it suffices to observe that $\m_*\cap M\subset R_C[\m_*]$. But this can be proved by repeating the argument from the proof of (b).
\end{proof}

\begin{proof}[Proof of Theorem~\ref{T:main}] We start by proving the theorem for \JBWs{}s. To this end we will use Lemma~\ref{le:plichko}.
Let $\m$ be any \JBWs{} and let $(p_\alpha)_{\alpha\in\Gamma}$, $\Lambda_0$ and $p_C$, $R_C$, $C\in\Lambda_0$ be defined as in Proposition~\ref{P:skel2}. Set $D=\bigcup_{C\in\Lambda_0} R_C^*\m$. For any $C\in\Lambda_0$ let  $(S_{C,j})_{j\in J_C}$  be a commutative projectional skeleton in $R_C\m_*$.

 Let $\phi_1,\dots,\phi_N$ be a subformula-closed list of formulas which contains the formulas from Lemma~\ref{l:predp}, the formulas provided by Lemmata~\ref{le:projekce} and~\ref{le:wld}. Let $Y$ be a countable subset containing the set $Y_0$ from Lemma~\ref{l:predp} and the sets provided by Lemmata~\ref{le:projekce} and~\ref{le:kom1}
and containing also $\m$, $\m_*$, $D$, $(p_\alpha)_{\alpha\in\Gamma}$, $(R_C)_{C\in\Lambda_0}$ and $((S_{C,j})_{j\in J_C})_{C\in\Lambda_0}$.
Let $M_1$ and $M_2$ be two elementary models for $\phi_1,\dots,\phi_N$ containing $Y$.

Let $C_1=M_1\cap \Gamma$, $C_2=M_2\cap \Gamma$ and $C=C_1\cap C_2$. Let $C=\{\gamma_n;n\in\N\}$ and $F_n=\{\gamma_1,\dots,\gamma_n\}$. Since $C\subset M_1\cap M_2$ and $F_n$ is finite, we get $F_n\in M_1\cap M_2$
for each $n$ (by Lemma~\ref{l:predp}(iii)). Therefore, by Lemma~\ref{le:kom2} we find $j_n,k_n\in J_{F_n}$ such that
$$P_{M_1}|_{R_{F_n}[\m_*]}=S_{F_n,j_n}\quad\mbox{and}\quad P_{M_2}|_{R_{F_n}[\m_*]}=S_{F_n,k_n}.$$
Fix any $\omega\in\m_*$. We have
$$ \begin{aligned} P_{M_1}P_{M_2}\omega
&= (P_{M_1}R_{C_1})(P_{M_2}R_{C_2})\omega
=P_{M_1}R_{C_1}R_{C_2}P_{M_2}\omega
= P_{M_1}R_{C}P_{M_2}\omega
\\&= P_{M_1}P_{M_2}R_{C}\omega
=\lim_n P_{M_1}P_{M_2}R_{F_n}\omega
=\lim_n P_{M_1}S_{F_n,k_n}R_{F_n}\omega
\\&=\lim_n S_{F_n,j_n}S_{F_n,k_n}R_{F_n}\omega.\end{aligned}$$
Similarly we get
$$ P_{M_2}P_{M_1}\omega
=\lim_n S_{F_n,k_n}S_{F_n,j_n}R_{F_n}\omega.$$
Since the projections $S_{F_n,k_n}$ and $S_{F_n,j_n}$ commute, we conclude
that $P_{M_1}$ and $P_{M_2}$ commute as well.

If $\m$ is $\sigma$-finite, then $\m_*$ is WCG by Proposition~\ref{L:wcg} applied to $p=1$. Next suppose that $\m$ is not $\sigma$-finite. Similarly as in the proof of \cite[Theorem 1.1]{BHK} to show that $\m_*$ is not WLD it suffices to prove that it contains an isometric copy of $\ell^1(\Gamma)$ for an uncountable set $\Gamma$. Such a set $\Gamma$ will be provided by Proposition~\ref{P:skel2} -- it is uncountable due to Lemma~\ref{L:spojitost}. For any $\alpha\in\Gamma$ let $\omega_\alpha$ be a normal state such that $p_\alpha=p_{\omega_\alpha}$ (it exists due to Lemma~\ref{L:sigmaf}). We claim that the closed linear span of $(\omega_\alpha)_{\alpha\in\Gamma}$ in $\m_*$ is isometric to $\ell^1(\Gamma)$.  To prove the claim fix a finite set $F\subset\Gamma$ and $c_\alpha\in\cc$ for $\alpha\in F$. For each $\alpha\in F$ fix a complex unit $\theta_\alpha$ such that $\theta_\alpha c_\alpha=|c_\alpha|$. 
Set $x=\sum_{\alpha\in F}\theta_\alpha p_{\alpha}$. Then $x^*=\sum_{\alpha\in F}\overline{\theta_\alpha}p_{\alpha}$ and hence $x^*\circ x=\sum_{\alpha\in F} p_{\alpha}=p_F$. Hence,
$$
\begin{aligned}\{xx^*x\}&=2(x\circ x^*)\circ x-(x\circ x)\circ x^*
\\&=2\left(\left(\sum_{\alpha\in F}{\theta_\alpha}p_{\alpha}\right) \circ \left(\sum_{\alpha\in F}\overline{\theta_\alpha}p_{\alpha}\right)\right)
\circ\left(\sum_{\alpha\in F}{\theta_\alpha}p_{\alpha}\right)
\\&\qquad\qquad-\left(\left(\sum_{\alpha\in F}{\theta_\alpha}p_{\alpha}\right)\circ
\left(\sum_{\alpha\in F}{\theta_\alpha}p_{\alpha}\right)\right)\circ
\left(\sum_{\alpha\in F}\overline{\theta_\alpha}p_{\alpha}\right)
\\&=2\left(\sum_{\alpha\in F}p_{\alpha}\right)\circ\left(\sum_{\alpha\in F}{\theta_\alpha}p_{\alpha}\right)
-\left(\sum_{\alpha\in F}{\theta_\alpha^2}p_{\alpha}\right)
\circ
\left(\sum_{\alpha\in F}\overline{\theta_\alpha}p_{\alpha}\right)
\\&=2\left(\sum_{\alpha\in F}{\theta_\alpha}p_{\alpha}\right)-\left(\sum_{\alpha\in F}{\theta_\alpha}p_{\alpha}\right)
=2x-x=x.
\end{aligned}$$
So, $\|x\|^3=\|\{xx^*x\}\|=\|x\|$, hence $\|x\|=1$ (unless the trivial case $F=\emptyset$). Further,
$$\left\|\sum_{\alpha\in F} c_\alpha\omega_{p_\alpha}\right\|\ge \left|\sum_{\alpha\in F} c_\alpha\omega_{p_\alpha}(x)\right|=\sum_{\alpha\in F}|c_\alpha|.$$
Since the converse inequality is obvious, we conclude that $\|\sum_{\alpha\in F} c_\alpha\omega_{p_\alpha}\|=\sum_{\alpha\in F}|c_\alpha|$, which completes the proof.

Finally, let us prove the theorem in case of \JBW{}s. Let $\A$ be a \JBW. By Lemma~\ref{L:JB-JB*} there is a unique $JB^*$-algebra $\m$ such that the $\A$ is isometrically isomorphic to $\saM$. By \cite[Theorem 3.4]{edwards} $\m$ is a \JBWs. Moreover, $\A_*$ is isometric to $\psaM$ by Lemma~\ref{L:JBW-JBW*}, hence it is enough to prove the statement for $\psaM$. Let $(p_\alpha)_{\alpha\in\Gamma}$, $\Lambda_0$ and $p_C$, $R_C$, $C\in\Lambda_0$ be defined as in Proposition~\ref{P:skel2}. It follows from
Lemma~\ref{L1-Up} that the projections $R_C$ preserve $\psaM$. Define by $R^{sa}_C$ the restriction of $R_C$ to $\psaM$, considered as a projection on $\psaM$. Since $R^{sa}_C[\psaM]$ is a complemented subspace of the WCG space $R_C[\m_*]$, it is WCG as well. Hence, we can fix, for each $C\in\Lambda_0$, 
a commutative projectional skeleton $(S_{C,j})_{j\in J_C}$ in $R_C^{sa}[\psaM]$.
Using an obvious analogue of Lemma~\ref{le:kom2} for $\psaM$ we can prove
that $\psaM$ satisfies the assumptions of Lemma~\ref{le:plichko} to conclude
in the same way as in case of $\m$. The assertions on $\sigma$-finite and non-$\sigma$-finite \JBW{}s can be done in the same way as in case of \JBWs{}s.
\end{proof}


\end{document}